\newtheorem{thm}{Theorem}[section]
\newtheorem{cor}[thm]{Corollary}
\newtheorem{lem}[thm]{Lemma}
\newtheorem{prop}[thm]{Proposition}
\theoremstyle{definition}
\newtheorem{defn}[thm]{Definition}
\newtheorem{que}[thm]{Question}
\newtheorem{rem}[thm]{Remark}
\numberwithin{equation}{section}
\newcommand{\N}{\mathbf{N}}
\newcommand{\Z}{\mathbf{Z}}
\newcommand{\R}{\mathbf{R}}
\newcommand{\Q}{\mathbf{Q}}
\newcommand{\tu}{\bigtriangleup}
\begin{document}
\title{On the space of ends of infinitely generated groups}
\author{Yves Cornulier}%
\address{CNRS and Univ Lyon, Univ Claude Bernard Lyon 1, Institut Camille Jordan, 43 blvd. du 11 novembre 1918, F-69622 Villeurbanne}
\email{cornulier@math.univ-lyon1.fr}
\subjclass[2010]{Primary 20F65; secondary 06E15, 20E08, 22F05}
\thanks{Supported by project ANR-14-CE25-0004 GAMME}




\date{June 10, 2019}

\maketitle
\begin{abstract}
We study the space of ends of groups. For a finitely generated group, this is a Cantor space as soon as it is infinite. In contrast, we show that for infinitely generated countable groups, it exhibits several behaviors. For instance, we show that for the free product $\Z\,\ast\,\Q$, it is a Cantor space, while for a free group of infinite rank, it is not metrizable. For arbitrary countable groups, we actually establish an alternative: the space of ends is either metrizable, or has a continuous map onto the Stone-\v Cech compactification of $\N$. We also show that the space of ends of a countable group has a continuous map onto the Stone-\v Cech boundary of $\N$ if and only if the group is infinite locally finite, and that otherwise it is separable.
For arbitrary groups, we also prove that the space of ends, if infinite, has no isolated point.

We also consider these questions for locally compact groups; also we extend Holt's theorem by showing that non-$\sigma$-compact regionally elliptic groups are 1-ended. 
\end{abstract}



\section{Introduction}

The study of ends of groups is a cornerstone in geometric group theory, and is mostly concerned with finitely generated groups, where it led to major results, such as Stallings' characterization of infinitely-ended groups and subsequent developments, such as the notion of accessibility, see for instance \cite{DD}. Here we rather focus on infinitely generated groups and phenomena that are specific to this setting. We start with a detailed state of the art, since such a survey cannot be found in the existing literature. After checking Definitions \ref{deftame} and \ref{defsends}, the reader can, in a first reading, directly read \S\ref{i_results} below, where the results of the paper are described.

\subsection{Background}

Freudenthal \cite{Fr1} introduced in 1931 the notion of ends of finitely generated groups. This is one of the earliest geometric notions on groups defined in such a generality. After Stone proved his duality theorem, Specker obtained a conceptual approach to ends of groups, notably allowing to define the space of ends for arbitrary groups.

Stone duality is a contravariant equivalence between the category of {\bf Stone spaces} (= Hausdorff, compact totally disconnected spaces), and the category of Boolean algebras. The forward functor maps a space $X$ to the algebra of continuous functions $X\to\Z/2\Z$. The backwards functor maps a Boolean algebra $A$ to its Stone space, namely its spectrum (the set of prime ideals of $A$, endowed with the compact topology induced by its closed inclusion into $\mathcal{P}(A)=2^A$, which is also the Zariski topology).

Let $G$ be a group. Then the left action $G$ on itself induces an action on its power set $\mathcal{P}(G)$ by Boolean algebra automorphisms, and this in turn induces an action on the quotient $\mathcal{P}^\star(G)$ of $\mathcal{P}(G)$ by the ideal of finite subsets. Denote by $\mathcal{P}^\star_G(G)$ the set of fixed points of this action, and let $\mathcal{P}_{(G)}(G)$ its inverse image in $G$. Thus, $\mathcal{P}_{(G)}(G)$ consists of the {\bf left $G$-commensurated} (also known as left almost invariant) subsets of $G$, that is, those subsets $A\subset G$ such that $\forall g\in G$, $gA\tu A$ is finite. 

\begin{rem}
The set of ends will be defined (Definition \ref{def_ends}) in terms of Stone duality. For the moment we start avoiding this definition, since some properties of the set of ends can be defined more elementarily.
\end{rem}

Note that the Boolean algebra $\mathcal{P}^\star_G(G)$ (or $\mathcal{P}^\star(G)$) is reduced to zero if and only if $G$ is finite; otherwise its unit (the image of the element $G$) is distinct from zero. The group $G$ is said to be {\bf 1-ended} if $\mathcal{P}^\star_G(G)$ is reduced to $\Z/2\Z$. Equivalently, this means that $G$ is infinite, and $\mathcal{P}_{(G)}(G)$ is reduced to finite and cofinite subsets.

The group $G$ is said to be {\bf finitely-ended} if $\mathcal{P}^\star_G(G)$ is finite, and {\bf infinitely-ended} otherwise. When (and only when!) $G$ is finitely-ended, the dimension $n$ of $\mathcal{P}^\star_G(G)$ as vector space over $\Z/2\Z$ is called the number of ends of $G$, and $G$ is called an {\bf $n$-ended} group. It is an early observation, independently due to Freudenthal \cite{Fr2} and Hopf \cite{Hop} in the finitely generated case, and Specker \cite{Sp} for arbitrary groups, that every group is either finite (0-ended), 1-ended, 2-ended, or infinitely-ended. Two-ended groups were characterized in these papers as infinite virtually cyclic groups. The characterization of infinitely-ended groups is Stallings' celebrated theorem in the finitely generated case \cite{Sta}. Its extension to the infinitely generated case makes a distinction between the case of locally finite groups, which deserves a specific study, and other groups. It leads to a clear-cut characterization of infinitely-ended groups \cite[Theorem IV.6.10]{DD}. The peculiar behavior of locally finite groups makes it convenient to summarize it in the next two classical theorems:

\begin{thm}
Let $G$ be a group that is not locally finite. Then (Freudenthal, Hopf, Specker) $G$ has 1, 2, or infinitely many ends. Moreover, in case $G$ is not 1-ended, one of the following holds:
\begin{enumerate}
\item (Freudenthal/Hopf, Specker) $G$ is 2-ended. This holds if and only if $G$ is virtually cyclic.
\item (Stallings, Dicks-Dunwoody) $G$ is infinitely-ended. This holds if and only if $G$ splits as an amalgam $A\ast_CB$ with $[A:C]\ge 2<[B:C]$ and $C$ finite, or as an HNN-extension $A\ast_C$ with $C$ a proper finite subgroup of $A$. Also, this holds if and only if $G$ admits an action by automorphisms on a tree, with finite edge stabilizers, such that there exist two elements acting as loxodromic isometries with no common endpoint.
\end{enumerate}
\end{thm}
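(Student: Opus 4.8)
Since the statement gathers several classical theorems, the plan is not to reprove everything \emph{ab initio} but to isolate the three logically independent assertions --- the trichotomy, the two-ended case, and the splitting theorem for the infinitely-ended case --- attribute each to its source, and pin down where the hypothesis ``$G$ not locally finite'' is used. Throughout I work with the Boolean algebra $B=\mathcal{P}^\star_G(G)$ of left-commensurated subsets modulo finite subsets, whose $\Z/2\Z$-dimension is by definition the number of ends. First note that a group which is not locally finite is in particular infinite, so $B\neq 0$ and $G$ has at least one end; thus it suffices to rule out every finite value $\geq 3$ and then treat the values $2$ and $\infty$.

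For the trichotomy I would follow Specker's \cite{Sp} extension of the Freudenthal--Hopf counting argument. The whole point is that $B$ cannot be finite of dimension $n$ with $n\geq 3$. A nonzero element of $B$ with nonzero complement is represented by a commensurated set $A$ with $A$ and its complement both infinite, i.e.\ a ``cut'' separating two infinitely-far-apart regions of $G$. Although $G$ acts trivially on $B$, it acts nontrivially on the family of such cuts by right translation ($A\mapsto Ag$ changes the class $[A]$ in general), and the key lemma is that superposing a suitable translate of a cut on the original strictly increases the number of deep complementary regions. Hence a partition of $G$ into $n\geq 3$ infinite atoms modulo finite can never be stable, forcing $\dim_{\Z/2\Z}B\in\{0,1,2\}$ or $B$ infinite; equivalently $G$ has $0$, $1$, $2$, or infinitely many ends.

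For the two-ended case I would show that $\dim_{\Z/2\Z}B=2$ implies $G$ virtually cyclic. Here $B=\{0,[A],[A^c],1\}$ for a single essential commensurated set $A$; the almost-invariance of $A$ lets one build an action of $G$ on a line (up to finite error), from which one extracts an element of infinite order generating a finite-index subgroup, so $G$ is virtually $\Z$. The converse is the direct computation for $\Z$, where the two half-lines give exactly two ends and finite-index and finite normal modifications do not change the count. I would remark that an infinite locally finite group, being torsion, can be neither virtually $\Z$ nor host a loxodromic element, which is exactly why such groups are excluded from the clean statement below.

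The deepest, and hence the main obstacle, is the characterization of the infinitely-ended case by a splitting over a finite subgroup --- Stallings \cite{Sta} for finitely generated $G$, Dicks--Dunwoody \cite[Theorem IV.6.10]{DD} in general. The nontrivial direction passes from the mere existence of an essential commensurated set (equivalently, at least two ends) to an action of $G$ on a tree with finite edge stabilizers. I would follow Dunwoody's method: starting from $A$, produce a $G$-finite collection of mutually \emph{nested} almost-invariant sets (the combinatorial heart, where accessibility-type phenomena and the genuinely infinitely-generated subtleties live), regard these nested cuts as the oriented edges of a tree on which $G$ acts with finite edge stabilizers, and then read off the amalgam $A\ast_C B$ or HNN-extension $A\ast_C$ by Bass--Serre theory. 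The reverse implication is routine: the Bass--Serre tree of such a splitting has infinitely many ends, and finiteness of the edge groups transfers this to $G$. Finally, for the reformulation in terms of loxodromic isometries, the hypothesis that $G$ is not locally finite is essential: a torsion group has no infinite-order, hence no loxodromic, element, whereas for non-locally-finite infinitely-ended $G$ one can exhibit two infinite-order elements acting along disjoint axes of the tree, giving loxodromics with no common endpoint.
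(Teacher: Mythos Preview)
The paper does not prove this theorem: it is stated in the Background subsection of the introduction as a summary of classical results, with attributions to Freudenthal, Hopf, Specker, Stallings, and Dicks--Dunwoody, and with a reference to \cite[Theorem IV.6.10]{DD} for the splitting part; no argument is given. Your proposal is therefore not competing with a proof in the paper but is a reasonable (if informal) road map through the literature.

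A couple of points where your sketch could be tightened. For the trichotomy, your description of ``superposing a suitable translate of a cut'' is vague; the clean argument is that the \emph{right} $G$-action on the finite set of ends has finite-index kernel $H$, and then one shows $H$ has the same ends as $G$ and derives a contradiction from the fixed-end picture (or uses Freudenthal's convergence-type argument directly). For the loxodromic reformulation, you correctly observe that torsion elements cannot be loxodromic, but the positive direction deserves a word: in an amalgam $A\ast_C B$ with $[A:C],[B:C]\ge 2$, any product $ab$ with $a\in A\smallsetminus C$, $b\in B\smallsetminus C$ is loxodromic on the Bass--Serre tree, and two such with distinct axes are easy to exhibit once one index is $\ge 3$; similarly for the HNN case. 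This is where the hypothesis ``not locally finite'' is genuinely consumed, matching the paper's remark that the locally finite case is set aside.
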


For the locally finite case, it is worth introducing the following terminology: for a set $I$ we say that a function $f:G\to I$ is left $G$-almost invariant if for every $g\in G$, the set of $h\in G$ such that $f(gh)\neq f(h)$ is finite.

\begin{defn}\label{deftame} We say that $G$ is {\bf tamely-ended} if every left $G$-almost invariant function on $G$ has a finite image, and {\bf untamely-ended} otherwise.
\end{defn}

Although this is a very natural notion, I was not aware of its appearance in the literature at the time this work was carried out. The referee mentioned to me that it was very recently independently introduced by Banakh and Protasov \cite{BP} in the broader context of coarse spaces: $G$ is tamely-ended if and only if its left coarse structure is ``so-bounded" in the sense of \cite{BP}.

\begin{thm}
Let $G$ be a locally finite group. Then one of the following holds
\begin{enumerate}
\item if $G$ if finite, then it is 0-ended;
\item (folklore, see \S\ref{biends}) if $G$ is infinite countable, then it is infinitely-ended; actually, it is untamely-ended;
\item (Holt \cite{Hol}) if $G$ is uncountable, then it is 1-ended.
\end{enumerate}
\end{thm}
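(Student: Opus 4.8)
The plan is to handle the three cases separately: (1) is immediate, (2) is an explicit construction, and the real work is in (3). For (1), if $G$ is finite then $\mathcal{P}^\star_G(G)=0$ by the remark preceding the statement, so $G$ is $0$-ended. For (2), write the infinite countable locally finite group as a strictly increasing union $G=\bigcup_n F_n$ of finite subgroups (take $F_n=\langle g_1,\dots,g_n\rangle$ for an enumeration of $G$ and pass to a strictly increasing subsequence), and define $f\colon G\to\N$ by $f(x)=\min\{n:x\in F_n\}$. I claim $f$ is left $G$-almost invariant with infinite image, which shows $G$ is untamely-ended. The image is infinite because the $F_n$ are strictly increasing; for almost-invariance, fix $g\in G$ and $m$ with $g\in F_m$, and note that if $f(x)=n>m$ then $gx\in F_n$ while $gx\notin F_{n-1}$ (else $x=g^{-1}(gx)\in F_{n-1}$), so $f(gx)=n=f(x)$, whence $\{x:f(gx)\neq f(x)\}\subseteq F_m$ is finite. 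Finally, untamely-ended implies infinitely-ended: given any left almost invariant $f$ with infinite (necessarily countable) image $I_0$, partition $I_0$ into infinitely many infinite pieces $(P_k)_{k\ge 1}$ and set $A_k=f^{-1}(P_k)$. Each $A_k$ is infinite, the $A_k$ are pairwise disjoint, and each is left $G$-commensurated since $gA_k\tu A_k\subseteq\{x:f(g^{-1}x)\neq f(x)\}$ is finite; pairwise disjoint infinite sets have $\Z/2\Z$-independent images in $\mathcal{P}^\star_G(G)$, so this algebra is infinite-dimensional.

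For (3) I must show that an uncountable locally finite $G$ is $1$-ended, i.e.\ that every left $G$-commensurated $A\subseteq G$ is finite or cofinite. First I would record a saturation lemma valid for \emph{any} locally finite group: for each finite subgroup $F$ the set $D_F=\bigcup_{f\in F}(fA\tu A)$ is finite, and whenever a right coset $Fx$ is disjoint from $D_F$ one has $fA\cap Fx=A\cap Fx$, i.e.\ $f(A\cap Fx)=A\cap Fx$, for all $f\in F$; since left multiplication by $F$ acts simply transitively on $Fx$, this forces $A\cap Fx\in\{\emptyset,Fx\}$. Thus, off a finite set, $A$ is a union of right $F$-cosets, and this holds for every finite subgroup simultaneously (a right coset of a larger finite subgroup being a finite union of right $F$-cosets, the descriptions are mutually consistent). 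Assuming $A$ and $A^c$ are both infinite, both the full cosets and the empty cosets are infinite in number at every finite level.

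The heart of the matter — and the step I expect to be the main obstacle — is to convert uncountability into a contradiction. The natural first move is to build a countable \emph{witness} subgroup $H=\bigcup_n H_n$: start from countably many points of $A$ and of $A^c$, and at each stage adjoin the finite coboundaries $gA\tu A$ for $g\in H_n$; the resulting countable $H$ satisfies $gA\tu A\subseteq H$ for all $g\in H$, so $A\cap H$ is an infinite, coinfinite, $H$-commensurated subset of $H$. This shows the obstruction already lives on a countable subgroup (consistent with (2)); what remains, and what genuinely uses uncountability, is to show that no such configuration can be consistently extended across an uncountable $G$ — equivalently, that the directed system of finite subgroups, having uncountable cofinality, admits no nontrivial end. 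This is precisely the content of Holt's theorem \cite{Hol}, which I would invoke to close the case; I do not expect the witness-subgroup construction alone to yield the contradiction, since the difficulty is exactly that the countable data looks locally admissible.

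It is illuminating to reformulate the statement cohomologically, which clarifies why countability is the exact dividing line. A set $A$ is finite or cofinite iff the $1$-cocycle $\phi_A(g)=gA\tu A$, valued in the regular module $\FF_2[G]=c_{00}(G,\Z/2\Z)$, is a coboundary (if $\phi_A=\phi_B$ for finite $B$ then $A\tu B$ is left-invariant, hence $\emptyset$ or $G$), so having more than one end produces a nonzero class in $H^1(G,\FF_2[G])$. Writing $G=\varinjlim F$ over its finite subgroups and using that $H^{>0}(F,\FF_2[F])=0$ for finite $F$, this cohomology is governed by ${\varprojlim}^1_F\,\FF_2[F\backslash G]$, with injective transition maps given by pullback along $F\backslash G\twoheadrightarrow K\backslash G$. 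For a countable cofinal chain this ${\varprojlim}^1$ is typically nonzero, matching (2), whereas for uncountable cofinality it must be controlled; the precise statement needed — that the set-theoretic cocycles $\phi_A$ are all coboundaries — is supplied by Holt's argument.
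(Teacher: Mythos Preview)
Your proof is correct and matches the paper's approach: case~(1) is trivial, your construction for~(2) is exactly Proposition~\ref{notame} (the function $\ell(g)=\min\{n:g\in G_n\}$ for an exhaustion by finite subgroups), and for~(3) both you and the paper defer to Holt~\cite{Hol}. Your additional heuristic material in~(3) --- the saturation lemma, the countable witness subgroup, and the cohomological reformulation via ${\varprojlim}^1$ --- is correct and illuminating but, as you yourself acknowledge, does not by itself prove Holt's theorem; the paper's own proof of the locally compact extension (\S\ref{s_holt}) proceeds instead via Holt's combinatorial lemma on functions constant on cosets (Lemma~\ref{holtlemma}).
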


This seems to ``close" the classification. However, things become richer if one looks more closely the space of ends:

\begin{defn}[Specker \cite{Sp}]\label{defsends}
\label{def_ends}Let $G$ be a group. The {\bf space of ends} (or binary corona) of $G$ is the Stone dual $\mathcal{E}^\star_G(G)$ of $\mathcal{P}^\star_G(G)$ (which we now denote $\mathcal{E}^\star(G)$). The cardinal of $\mathcal{E}^\star(G)$ is called the {\bf number of ends} of $G$. The weight (the smallest cardinal of a basis of the topology) of $\mathcal{E}^\star(G)$ is just the dimension of $\mathcal{P}^\star_G(G)$ as a vector space over $\Z/2\Z$, which, when infinite, is just the cardinal of $\mathcal{P}^\star_G(G)$. We call it the {\bf end weight} of $G$; it is countable if and only if the space of ends is metrizable. (Beware that some authors confusingly refer to the end weight of $G$ as ``number of ends of $G$"; when finite they indeed coincide.)
\end{defn}

\begin{rem}
The space of ends is a space for which the set of points is more elaborate than its Boolean algebra of continuous functions to $\Z/2\Z$ (i.e., the Boolean algebra of clopen subsets). This is frequent in general topology: a well-known instance is the Stone-\v Cech compactification of $\N$, whose points are ultrafilters on $\N$, while its Boolean algebra of clopen subsets is simply the power set of $\N$.
\end{rem}

\begin{rem}
See Remark \ref{end_binary} for another formulation of the definition of space of ends avoiding an explicit reference to Stone duality.
\end{rem}

The quotient of $\mathcal{P}^\star_G(G)$ by its additive subgroup $\{0,1\}$ can be identified to the 1-cohomology of $G$ in the $G$-module of finitely supported functions $G\to\Z/2\Z$. This important remark is due to Specker \cite{Sp49}; see \cite{Coh} for details and extensions. Thus, the end weight of $G$ is equal to the dimension of this cohomology group. 

Furthermore, the right action of $G$ on itself induces an action of $G$ on $\mathcal{P}^\star_G(G)$, and hence on $\mathcal{E}^\star(G)$.

When $G$ is a infinitely-ended finitely generated group, things are well-understood: $\mathcal{P}_{(G)}(G)$ is countable, that is $\mathcal{E}^\star(G)$ is metrizable (see Remark \ref{fg_met} for a direct proof and Theorem \ref{caract_metri} for a generalization). Moreover, Freudenthal and Hopf independently \cite{Fr2,Hop} proved that $\mathcal{E}^\star(G)$ has no isolated point, and hence is a Cantor space (hence the number of ends is $2^{\aleph_0}$, and the end weight is $\aleph_0$).
 Moreover, Freudenthal \cite[\S 7]{Fr2} proved a result, which now is essentially interpreted as: the right action of $G$ on $\mathcal{E}^\star(G)$ is a convergence action (as formalized by Gehring and Martin \cite{GM87}). Stallings later obtained a splitting result for infinitely-ended groups. Combining with Stallings' theorem \cite{Sta}, Abels \cite{Ab77} deduced that the $G$-action on $\mathcal{E}^\star(G)$ is minimal (all orbits are dense).

 By definition, $G$ is untamely-ended if and only if there is a surjective left $G$-almost invariant function onto $\N$. Note that this implies that we can embed the power set of $\N$ into $\mathcal{P}^\star_G(G)$, and hence that $\mathcal{E}^\star(G)$ is non-metrizable (it has a surjective map onto the Stone-\v Cech compactification of $\N$); in particular, every finitely generated group is tamely-ended.




When $G$ is an infinite countable locally finite group, then it is easy to check that $\mathcal{E}^\star(G)$ is infinite; this was done in the abelian case by Specker in \cite{Sp}, and is mentioned as a routine fact in \cite[p.19-20]{Coh}. Indeed earlier Scott and Sonneborn \cite{ScS} proved that they have at least 2 ends, but their proof immediately shows that they are not tamely-ended. A more precise result is due to Protasov \cite{Pro11}; let us describe it.

A {\bf Parovi\v{c}enko space} is, informally speaking, a space that ``resembles" the Stone-\v{C}ech boundary of an infinite countable set. Formally speaking, it is a Stone space whose Boolean algebra has cardinal $2^{\aleph_0}$ and in which every nonempty $\mathrm{G}_\delta$ subset has nonempty interior. It is a basic but important observation that a Parovi\v{c}enko space has no isolated point, and more generally has no nonempty metrizable clopen subset.

The Stone-\v{C}ech boundary of an infinite countable set is such a space. Parovi\v{c}enko spaces are all homeomorphic if and only if the continuum hypothesis (CH) holds (see \cite[\S 1.2]{vM}).

\begin{thm}[Protasov \cite{Pro11}]
Let $G$ be an infinite countable locally finite group. Then $\mathcal{E}^\star(G)$ is a Parovi\v{c}enko space. In particular,
\begin{itemize}
\item $\mathcal{E}^\star(G)$ has no isolated point, and more generally has no nonempty clopen metrizable subspace;
\item if the continuum hypothesis holds, then $\mathcal{E}^\star(G)$ is homeomorphic to the Stone-\v{C}ech boundary of $\N$.
\end{itemize}
\end{thm}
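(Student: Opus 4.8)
The plan is to verify directly the two defining properties of a Parovi\v{c}enko space for $X=\mathcal{E}^\star(G)$: that the Boolean algebra $\mathcal{P}^\star_G(G)$ has cardinal $2^{\aleph_0}$, and that every nonempty $\mathrm{G}_\delta$ subset of $X$ has nonempty interior. Throughout I fix an exhaustion $\{1\}=F_0\subseteq F_1\subseteq\cdots$ of $G$ by finite subgroups with $\bigcup_nF_n=G$; passing to a cofinal subsequence (which does not change $\mathcal{E}^\star(G)$) I may assume each inclusion is strict, so that the \emph{shells} $X_n=F_n\setminus F_{n-1}$ are all nonempty and partition $G$. The computation underlying everything is that left translation by $g\in F_m$ fixes each shell $X_n$ setwise once $n>m$, and fixes each right coset $F_mx\subseteq X_n$ as a set; consequently the \emph{radial map} $\rho\colon G\to\N$, $\rho(g)=\min\{n:g\in F_n\}$, is left $G$-almost invariant, since for $g\in F_m$ one has $\{h:\rho(gh)\neq\rho(h)\}\subseteq F_m$. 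The cardinality claim follows at once: the bound $|\mathcal{P}^\star_G(G)|\le 2^{\aleph_0}$ holds because $G$ is countable, while almost invariance of $\rho$ shows $\rho^{-1}(S)=\bigsqcup_{n\in S}X_n$ is commensurated for every $S\subseteq\N$, and as each $X_n\neq\emptyset$, distinct $S$ modulo finite give distinct classes; thus $S\mapsto[\rho^{-1}(S)]$ embeds $\mathcal{P}(\N)/\mathrm{fin}$, of cardinal $2^{\aleph_0}$, into $\mathcal{P}^\star_G(G)$ (this is dual to the surjection $\mathcal{E}^\star(G)\to\beta\N\setminus\N$ witnessing untameness).

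The key tool for the $\mathrm{G}_\delta$ property is a structural lemma: if $A$ is commensurated then for every $m$ there is $N$ such that $A\cap X_n$ is a union of right $F_m$-cosets for all $n>N$. Indeed, for fixed $g\in F_m$ the set $gA\tu A$ is finite and so meets only finitely many shells; since $g(A\cap X_n)=(gA)\cap X_n$ for $n>m$, this forces $A\cap X_n$ to be left-$g$-invariant for $n$ large. As $F_m$ is finite one may take a common threshold $N$ over all $g\in F_m$, and left $F_m$-invariance of a subset of $X_n$ is exactly the property of being a union of right $F_m$-cosets.

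For the $\mathrm{G}_\delta$ property I would reduce to the pseudo-intersection statement (B'): every decreasing-mod-finite sequence $A_0\supseteq^* A_1\supseteq^*\cdots$ of infinite commensurated sets admits an infinite commensurated $B$ with $B\subseteq^* A_k$ for all $k$. Granting (B'), if $H=\bigcap_nO_n$ is a nonempty $\mathrm{G}_\delta$ and $p\in H$, I choose clopens $\widehat{c_n}\ni p$ with $\widehat{c_n}\subseteq O_n$; the elements $a_n=c_0\wedge\cdots\wedge c_n$ are nonzero and decreasing, (B') supplies a nonzero lower bound $b$, and then $\widehat{b}\subseteq\bigcap_nO_n=H$ is a nonempty clopen, so $H$ has nonempty interior. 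To prove (B') I first replace each $A_k$ by the genuine intersection $A_{[k]}=A_0\cap\cdots\cap A_k$, still commensurated and, because $A_k\setminus A_j$ is finite for $j\le k$, still infinite and equal to $A_k$ modulo finite. Then I build $B$ diagonally: using the structural lemma for $A_{[i]}$ and the fact that an infinite commensurated set meets infinitely many shells, I pick shells $n_0<n_1<\cdots$ and a single right $F_i$-coset $C_i\subseteq A_{[i]}\cap X_{n_i}$, and set $B=\bigsqcup_iC_i$. This $B$ is infinite; it is commensurated because for $g\in F_m$ one has $gC_i=C_i$ as soon as $i\ge m$, so $gB\tu B$ is supported on the finitely many shells with $i<m$; and $B\subseteq^* A_k$ because for $i\ge k$ one has $C_i\subseteq A_{[i]}\subseteq A_k$.

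The main obstacle is the simultaneous control in this construction: commensurability forces the coset-invariance level of $B\cap X_{n_i}$ to tend to infinity, whence the choice of $F_i$-cosets with $i\to\infty$, while the containment $B\subseteq^* A_k$ forces one to work with the honest intersections $A_{[i]}$ rather than the given $A_i$, since the countable union of the finite sets $A_i\setminus A_k$ could otherwise be infinite. The structural lemma is exactly what makes these requirements compatible, guaranteeing that deep shells of a commensurated set are built out of large cosets. Finally, the remaining standard Parovi\v{c}enko axiom, that two disjoint open $F_\sigma$ subsets have disjoint closures, should follow by the same shell-diagonal technique applied to separate the two families by a commensurated set, which is what is needed for the homeomorphism with $\beta\N\setminus\N$ under CH.
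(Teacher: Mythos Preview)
The paper does not give its own proof of this statement: it is quoted as Protasov's theorem, with a reference to \cite{Pro11}, and is used as a black box throughout. So there is no ``paper's proof'' to compare against; you have supplied a self-contained argument where the paper simply cites the literature.

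Your argument is correct and well-organized for the two properties that the paper lists in its working definition of a Parovi\v{c}enko space. The radial map $\rho$ gives the embedding of $\mathcal{P}(\N)/\mathrm{fin}$ into $\mathcal{P}^\star_G(G)$, and your structural lemma (that deep shells of a commensurated set are unions of right $F_m$-cosets for any prescribed $m$) is exactly the right tool; it is a sharpening of Lemma~\ref{actfg} in the paper, specialized to the filtration by the $F_m$. The diagonal construction of the pseudo-intersection $B$ is sound: the key point you handle correctly is that the coset $C_i$ must be an $F_i$-coset (with the level going to infinity) to make $B$ commensurated, while working with the genuine intersections $A_{[i]}$ ensures $B\subseteq^* A_k$.

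One caveat: the paper's stated definition of Parovi\v{c}enko space is abbreviated. The full characterization underlying the CH conclusion (as in \cite{vM}) also requires that disjoint open $F_\sigma$ sets have disjoint closures, equivalently that any two countable families $(a_n)$, $(b_m)$ in the Boolean algebra with $a_n\wedge b_m=0$ can be separated by some $c$. You acknowledge this at the end but only gesture at a proof. The shell-diagonal method does work here too: one builds $C$ shell by shell, choosing at stage $n$ a union of $F_n$-cosets that contains $A_{[n]}\cap X_{k}$ and avoids $B_{[n]}\cap X_k$ on suitably deep shells $k$. This is genuinely more delicate than the pseudo-intersection property, and if you want the CH homeomorphism you should spell it out rather than leave it as ``should follow''.
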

This also implies that the end weight of $G$ is $2^{\aleph_0}$. Probably this latter easier fact is folklore: indeed the proof of \cite[Theorem 4]{ScS} that locally finite groups have at least 2 ends immediately adapts to prove this fact, embedding an infinite power set as a subalgebra of $\mathcal{P}_G^\ast(G)$, implying as well that the space of ends has cardinal $2^{2^{\aleph_0}}$.

\begin{rem}\label{BanaZari}
By Banakh-Zarichnyi \cite[Corollary 8]{BZ}, all infinite countable locally finite groups are coarsely equivalent. Hence, their spaces of ends are pairwise homeomorphic (regardless of CH). It is shown in \cite{BCZ} that it is consistent under ZFC (and the negation of CH) that it is not homeomorphic to the Stone-\v Cech boundary of $\N$.
\end{rem}

When $G$ is countable and locally finite, not much is known about the right action of $G$ on $\mathcal{E}^\star(G)$; note that this action is trivial if, for instance, $G$ is abelian. The space $\mathcal{E}^\star(G)$ is not separable: this follows from being Parovi\v{c}enko (Protasov's result), or, even simpler, because it has uncountable cellularity, which is one easy implication of  Theorem \ref{count_cell} below. Hence the action of $G$ on $\mathcal{E}^\star(G)$ cannot be minimal in this case.


It remains to consider the case of infinitely generated, non-locally-finite, infinitely-ended groups, especially countable ones. The only known result in this case I could locate are the following:


\begin{thm}[Abels, Satz 7.5 \cite{Ab77}]\label{abels75}
Let $G$ be an infinitely-ended group that is not locally finite. Then $\mathcal{E}^\star(G)$ has no finite $G$-equivariant quotient except singletons.
\end{thm}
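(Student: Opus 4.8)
The plan is to pass to a finite-index subgroup that fixes each fibre of the quotient, reduce the statement to the assertion that this subgroup has no proper clopen invariant subset of its space of ends, and then obtain a contradiction from the north--south dynamics of loxodromic elements supplied by the structure theorem.

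First I would translate everything through Stone duality. A finite $G$-equivariant quotient $q\colon\mathcal{E}^\star(G)\to F$ with $|F|\ge 2$ is the same datum as a $G$-invariant partition of $\mathcal{E}^\star(G)$ into finitely many clopen sets $\widehat{A_1},\dots,\widehat{A_n}$ ($n\ge 2$), the $\widehat{A_i}$ being the fibres of $q$ and the classes $A_i$ being the atoms of a finite $G$-invariant subalgebra of $\mathcal{P}^\star_G(G)$. Let $H=\ker\bigl(G\to\Sym(\{A_1,\dots,A_n\})\bigr)$ be the kernel of the induced right permutation action; it has finite index, hence is again infinitely-ended and not locally finite, and the natural identification $\mathcal{E}^\star(G)\cong\mathcal{E}^\star(H)$ (finite-index subgroups are coarsely equivalent, so their algebras of commensurated sets coincide) is $H$-equivariant. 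By construction $H$ fixes each class $A_i$, so each $\widehat{A_i}$ is a proper, nonempty, clopen, $H$-invariant subset of $\mathcal{E}^\star(H)$. Renaming $H$ as $G$, it suffices to prove: \emph{if $G$ is infinitely-ended and not locally finite, then the $G$-action on $\mathcal{E}^\star(G)$ admits no proper nonempty clopen invariant subset.}

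Next I would bring in dynamics. By the structure theorem for infinitely-ended groups that are not locally finite, $G$ acts on a tree with finite edge stabilisers, carrying two loxodromic elements $s,s'$ with no common endpoint. The input I will use is that such an $s$ acts on $\mathcal{E}^\star(G)$ with \emph{north--south dynamics}: there are distinct points $p^{+}\neq p^{-}$ such that $s^{n}x\to p^{+}$ for every $x\neq p^{-}$, and $s^{-n}x\to p^{-}$ for every $x\neq p^{+}$. Concretely, fixing an edge $e$ on the axis of $s$ and a base vertex $v_0$, the half-tree set $A^{+}=\{g: g^{-1}v_0\text{ lies on the }\xi^{+}\text{-side of }e\}$ is left-$G$-commensurated (finiteness of edge stabilisers makes any finite geodesic meet only finitely many translates of $e$), and right translation by $s$ shifts $e$ towards the attracting end $\xi^{+}$, so the clopen sets $s^{n}\widehat{A^{+}}$ decrease towards the attracting fixed point. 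Granting the dynamics, the contradiction is a short ping-pong. Let $\widehat B$ be proper, nonempty, clopen and $G$-invariant, with complement $\widehat{B^{c}}$, and say $p^{+}\in\widehat B$. Choosing $q\in\widehat{B^{c}}$ (nonempty) we have $q\neq p^{+}$, so $s^{-n}q\to p^{-}$ forces $p^{-}\in\widehat{B^{c}}$. On the other hand $\widehat B\neq\{p^{+}\}$, since $s'$ moves $p^{+}$ (because $p^{+}\notin\{s'^{+},s'^{-}\}$ one has $s'^{N}p^{+}\to s'^{+}\neq p^{+}$), whereas $\widehat B$ is invariant; hence there is $r\in\widehat B$ with $r\neq p^{+}$, and $s^{-n}r\to p^{-}$ gives $p^{-}\in\widehat B$. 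Thus $p^{-}\in\widehat B\cap\widehat{B^{c}}=\varnothing$, a contradiction.

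The main obstacle is exactly this dynamical input in the infinitely generated case, where $\mathcal{E}^\star(G)$ need not be metrizable and the classical convergence-group machinery (which provides the loxodromic/parabolic trichotomy and genuine north--south dynamics on compact \emph{metric} spaces) does not apply directly. The subtle point is not the existence of a nested family $s^{n}\widehat{A^{+}}$ contracting onto an attracting \emph{set} $K^{+}=\bigcap_n s^{n}\widehat{A^{+}}$, which is immediate from compactness; it is that $K^{+}$ is a single point. If $K^{+}$ could be split by an invariant clopen set, the ping-pong collapses. I would establish single-pointedness by showing that the half-tree sets attached to edges marching towards $\xi^{+}$ separate $\xi^{+}$ from every other end, i.e.\ that a unique ultrafilter on $\mathcal{P}^\star_G(G)$ contains all of them; equivalently, that the $G$-action on $\mathcal{E}^\star(G)$ is a convergence action. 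This is the technical heart, and is precisely where the finiteness of the edge stabilisers, controlling how ends may accumulate along the axis, has to be used in full.
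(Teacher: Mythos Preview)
The paper does not supply its own proof of this statement; it is quoted as Abels' result. What the paper \emph{does} prove, however, lines up perfectly with your first reduction: after passing to the finite-index kernel, the assertion ``no proper nonempty clopen $G$-invariant subset of $\mathcal{E}^\star(G)$'' is exactly the statement that $G$ has a single \emph{bi-end}, and this is case~(\ref{scs4}) of Theorem~\ref{tbiends}, established in \S\ref{biends}. The argument there is elementary (no structure theorem, no dynamics on the end space): one uses the Scott--Sonneborn lemma that any group possessing an infinite finitely generated subgroup of infinite index is $1$-bi-ended. For an infinitely generated, non-locally-finite group this hypothesis is immediate; for a finitely generated infinitely-ended group one first invokes Freudenthal's Theorem~\ref{frenontor} to produce an element of infinite order. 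So the paper's implicit route is strictly lighter than yours.

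Your approach instead calls on the full Dicks--Dunwoody splitting and then needs genuine north--south dynamics of a loxodromic on $\mathcal{E}^\star(G)$. You are right to flag this as the crux, and you do not actually prove it: the claim that $K^{+}=\bigcap_n s^{n}\widehat{A^{+}}$ is a single point is left as a program. Concretely, single-pointedness amounts to showing that \emph{every} left-commensurated subset $X$ either almost contains, or is almost disjoint from, some $s^{n}A^{+}$; nothing in your outline forces this, and the vague appeal to ``finiteness of edge stabilisers controlling how ends accumulate along the axis'' is not an argument. Without this, the ping-pong does not start. If you want to rescue the dynamical line, note that a bi-commensurated subset of $G$ restricts to a bi-commensurated subset of any finitely generated subgroup $L$; choosing $L$ to contain $s,s'$ and to have $\ge 3$ ends with $X\cap L$ neither finite nor cofinite lets you run Freudenthal's convergence-action argument \emph{inside} $L$---but that is essentially the restriction-to-a-finitely-generated-subgroup manoeuvre the paper uses in Proposition~\ref{isolaspecker}, and at that point the Scott--Sonneborn route is shorter.
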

In other words, the only finite orbits of the right $G$-action on the ring $\mathcal{P}_G^\star(G)$ are the singletons $\{0\}$, $\{1\}$. Abels also observes that this result fails for infinite countable locally finite groups, in which case there are arbitrary large finite quotients (i.e., arbitrary large finite invariant subsets in the Boolean algebra); which actually can be chosen with trivial action; this essentially follows from \cite[Theorem 4]{ScS}; see Proposition \ref{notame} for a precise statement.

Let us also mention the following result, which combines results of the given authors:

\begin{thm}[Freudenthal, Stallings, Dicks-Dunwoody, Abels]\label{nofior}
For every infinitely-ended non-locally finite group, the right action of $G$ on $\mathcal{E}^\star(G)$ has no finite orbit.
\end{thm}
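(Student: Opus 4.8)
The plan is to combine the structural splitting of infinitely-ended groups with the convergence dynamics of the right $G$-action on $\mathcal{E}^\star(G)$, and to rule out a finite orbit by exhibiting two ``independent'' loxodromic elements whose fixed-point sets are disjoint.

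First I would invoke the structure theory for infinitely-ended groups (Stallings in the finitely generated case, Dicks--Dunwoody in general): since $G$ is infinitely-ended and not locally finite, it acts on a tree $T$ with finite edge stabilizers and contains two elements $g,h$ acting as hyperbolic isometries of $T$ whose axes have no common endpoint in $\partial T$. Each such endpoint determines a point of $\mathcal{E}^\star(G)$: fixing a base vertex, every edge $e$ of $T$ cuts $T$ into two half-trees, and because the edge stabilizer is finite the corresponding subset of $G$ is $G$-commensurated, hence an element of $\mathcal{P}^\star_G(G)$; the nested half-trees converging to one end of the axis then generate an ultrafilter on this Boolean algebra, i.e.\ a point of the Stone dual. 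The hypothesis that $g$ and $h$ have no common endpoint guarantees that the four resulting ends $g^+,g^-,h^+,h^-\in\mathcal{E}^\star(G)$ are pairwise distinct; in particular $\{g^+,g^-\}\cap\{h^+,h^-\}=\emptyset$.

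Next I would use the dynamical input. By Freudenthal's theorem ($\S 7$ of \cite{Fr2}), reinterpreted as the statement that the right action on $\mathcal{E}^\star(G)$ is a convergence action, and extended beyond the finitely generated case by Abels \cite{Ab77}, the element $g$ (resp.\ $h$) is loxodromic with north--south dynamics. The precise fact I need is the convergence-group classification of infinite-order elements: a hyperbolic tree isometry has infinite order and fixes the two distinct ends $g^+\neq g^-$, and an infinite-order element of a convergence group fixing at least two points is loxodromic with \emph{exactly} those two fixed points. Hence $\mathrm{Fix}(g)=\{g^+,g^-\}$ and $\mathrm{Fix}(h)=\{h^+,h^-\}$ in $\mathcal{E}^\star(G)$.

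Finally I would derive the contradiction. Suppose the right action carried a finite orbit $F$, and let $K\trianglelefteq G$ be the kernel of the permutation action of $G$ on $F$, so $[G:K]<\infty$. Replacing $g,h$ by suitable powers lying in $K$ (a power of a hyperbolic tree isometry is again hyperbolic with the same axis, hence the same pair of fixed ends, and retains no common endpoint), I may assume $g,h\in K$, so that $g$ and $h$ fix every point of $F$ pointwise; choosing any $p\in F$ gives $p\in\mathrm{Fix}(g)\cap\mathrm{Fix}(h)=\{g^+,g^-\}\cap\{h^+,h^-\}=\emptyset$, a contradiction. The main obstacle is the dynamical step: one must know that the convergence property, and in particular the north--south dynamics of hyperbolic tree elements with exactly two fixed ends, holds for \emph{arbitrary} (possibly uncountable, infinitely generated) infinitely-ended non-locally-finite groups, not merely the finitely generated case originally treated by Freudenthal. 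This is exactly what Abels' analysis supplies (the circle of ideas surrounding Theorem \ref{abels75}); identifying ends of $T$ with points of $\mathcal{E}^\star(G)$ and checking that the axis endpoints give distinct, $g$-fixed ends are the technical points requiring care, whereas the reduction to a common fixed point via powers is routine.
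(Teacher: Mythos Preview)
Your reduction from a finite orbit to a fixed point by passing to powers in the kernel $K$ is exactly the paper's reduction (the paper phrases it as passing to a finite-index subgroup with the same space of ends), so the global architecture is fine. The divergence, and the gap, is in how you dispose of the resulting fixed point.

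The paper does not argue via convergence dynamics. It simply cites Abels' theorem that a group splitting as an amalgam or HNN over a finite subgroup has \emph{no} fixed point on its space of ends; since the finite-index subgroup inherits such a splitting, the contradiction is immediate. Your route instead requires that each hyperbolic tree element $g$ has \emph{exactly two} fixed points $\{g^+,g^-\}$ in $\mathcal{E}^\star(G)$, and you justify this by asserting that Abels extends Freudenthal's convergence-action theorem beyond the finitely generated case. That attribution is not correct: what Abels proves in \cite{Ab77} (and what the paper cites) is Theorem~\ref{abels75} on finite equivariant quotients and the no-fixed-point statement for groups with splittings --- not that the right action on $\mathcal{E}^\star(G)$ is a convergence action in general. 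Freudenthal's result is stated only for finitely generated $G$, and for infinitely generated $G$ the space $\mathcal{E}^\star(G)$ can be non-metrizable and far larger than $\partial T$; there is no reason given why $g$ could not fix further ends that do not arise from the Bass--Serre tree. So the step $\mathrm{Fix}(g)=\{g^+,g^-\}$ is unjustified, and with it the disjointness contradiction collapses.

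In short: keep your finite-index reduction, but replace the convergence-group step by a direct appeal to Abels' no-fixed-point theorem applied to the finite-index subgroup (which still splits over a finite subgroup). That is precisely the paper's argument, and it avoids the unproved dynamical claim.
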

(For an amalgam or HNN extension as given in the Stallings-Dicks-Dunwoody splitting theorem, Abels \cite{Ab77} proves that $G$ has no fixed point on its space of ends (relying on Freudenthal \cite{Fr2}). If there were a finite orbit, some finite index subgroup, whose space of ends is the same as $G$, would have a fixed point. At the time Abels wrote \cite{Ab77}, only the Stallings theorem was available, so the conclusion was only stated for finitely generated groups.)

\subsection{Results for discrete groups}\label{i_results}

For a group, it is natural to ask whether we have an alternative between having a metrizable space of ends, and the existence of a surjective almost left invariant map onto $\N$ (failure of being tamely-ended). For countable groups, this is indeed the case.

\begin{thm}[Metrizable vs untamely-ended alternative, \S\ref{tamalter}]
Let $G$ be a countable group. The following are equivalent:
\begin{enumerate}
\item the space of ends of $G$ is metrizable;
\item there is no continuous surjective map from the space of ends of $G$ onto the Stone-\v Cech compactification of $\N$;
\item $G$ is tamely-ended (Definition \ref{deftame}).
\end{enumerate}
\end{thm}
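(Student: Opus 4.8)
The plan is to move everything to the Boolean side via Stone duality, writing $B=\mathcal{P}^\star_G(G)$ so that $\mathcal{E}^\star(G)$ is the Stone dual of $B$. Under duality the three conditions become purely algebraic: (1) says $B$ is countable (a Stone space is metrizable iff second countable iff its clopen algebra is countable, as recorded in Definition \ref{defsends}); (2) says there is no injective Boolean homomorphism $\mathcal{P}(\N)\hookrightarrow B$, since a continuous surjection of Stone spaces dualizes to an injection of clopen algebras and the clopen algebra of $\beta\N$ is $\mathcal{P}(\N)$; and (3) says there is no surjective left $G$-almost invariant map $G\to\N$. I would then prove the cycle (1)$\Rightarrow$(2)$\Rightarrow$(3)$\Rightarrow$(1).

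The implication (1)$\Rightarrow$(2) is immediate: an injection $\mathcal{P}(\N)\hookrightarrow B$ would force $|B|\ge 2^{\aleph_0}$, contradicting countability. For (2)$\Rightarrow$(3) I argue contrapositively. Given a surjective left $G$-almost invariant $f\colon G\to\N$, the map $S\mapsto f^{-1}(S)$ sends $\mathcal{P}(\N)$ into the commensurated subsets, because for each $g$ one has $f^{-1}(S)\tu gf^{-1}(S)\subseteq\{h:f(g^{-1}h)\neq f(h)\}$, which is finite; hence it descends to a Boolean homomorphism $\bar\phi\colon\mathcal{P}(\N)\to B$. To make $\bar\phi$ injective I first replace $f$ by a function all of whose fibers are infinite: if only finitely many fibers are infinite, I coarsen $f$ by partitioning $\N$ into infinitely many infinite blocks and composing, which preserves both surjectivity and almost invariance. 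With all fibers infinite, $\bar\phi(S)=0$ forces $S$ finite and then empty, so $\bar\phi$ is injective and dualizes to the required surjection onto $\beta\N$.

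The substance is (3)$\Rightarrow$(1), equivalently: if $B$ is uncountable then $G$ is untamely-ended. I would first record the reformulation that a surjective left $G$-almost invariant $f\colon G\to\N$ is the same datum as a strictly increasing exhaustion $G_1\subsetneq G_2\subsetneq\cdots$ of $G$ by commensurated sets with $\bigcup_n G_n=G$ such that for every $g$ one has $gG_n=G_n$ \emph{exactly} (not merely up to a finite set) for all but finitely many $n$: indeed the level sets $A_n=f^{-1}(n)$ satisfy $\sum_n|A_n\setminus g^{-1}A_n|<\infty$, a sum of nonnegative integers, which forces $gA_n=A_n$ for almost all $n$, and conversely such an exhaustion yields $f(x)=\min\{n:x\in G_n\}$. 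The task is thus to build, from mere uncountability of $B$, commensurated sets that are exactly invariant under larger and larger finite subsets of $G$ while still exhausting $G$. Using that $G$ is countable I would put a Polish topology on the cocycle space $Z^1(G,\FF_2 G)$, a closed subgroup of $(\FF_2 G)^G$, recall the identification $B/\{0,1\}\cong H^1(G,\FF_2 G)$, and observe that since the coboundaries form a countable subgroup, uncountability of $B$ makes $Z^1$ an uncountable Polish group and hence one containing a perfect set of cocycles. Fixing an exhaustion $G=\bigcup_n K_n$ by finitely generated subgroups and enumerating $G$, I would then extract, by a recursive tree construction diagonalizing against the $K_n$, commensurated sets $G_n$ that are exactly $K_n$-invariant (so that $gG_n=G_n$ as soon as $g\in K_n$), nested, and exhausting $G$. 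Since an exactly $K_n$-invariant commensurated set is precisely a commensurated subset of the coset space $K_n\backslash G$, this step amounts to propagating a nontrivial relative end of $G$ modulo $K_n$ cofinally in $n$.

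The main obstacle is exactly this last construction: uncountability of $B$ is an abstract cardinality statement about commensurated sets taken \emph{up to finite differences}, whereas untameness demands sets that become \emph{exactly} invariant, and assembling these into a single nested exhaustion requires reconciling the mod-finite flexibility with the rigidity of exact invariance. Concretely, the hard point is to ensure that exactly $K_n$-invariant commensurated sets stay nontrivial and can be nested to exhaust $G$ for cofinally many $n$. I expect this to come from showing that if the relative ends of $G$ modulo $K_n$ were eventually trivial then $B$ would be countable, so that uncountability of $B$ forces nontrivial relative splittings cofinally often, after which the finiteness of every boundary $gG_n\tu G_n$ lets one run the diagonal recursion against the countably many pairs $(g,n)$ to produce the exhaustion witnessing untameness.
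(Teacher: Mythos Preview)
Your treatment of the easy implications (1)$\Rightarrow$(2)$\Rightarrow$(3) is correct and essentially the same as the paper's; the coarsening-by-infinite-blocks trick to make all fibers infinite is exactly how the paper passes from the end compactification to the space of ends.

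For the hard direction (3)$\Rightarrow$(1), you do locate the pivotal observation: writing $G=\bigcup_n K_n$ with $K_n$ finitely generated, the ``coboundary'' map $f\mapsto (f-s\cdot f)_{s\in F_n}$ from almost-invariant functions into the countable group $(\FF_2 G)^{F_n}$ has uncountable domain when $B$ is uncountable, hence uncountable kernel --- i.e.\ there are nontrivial \emph{exactly} $K_n$-invariant commensurated subsets for every $n$. This is precisely the paper's key step. However, your packaging introduces two detours that the paper avoids. First, the Polish topology on $Z^1(G,\FF_2 G)$ and the perfect-set theorem are irrelevant: the existence of $K_n$-invariant commensurated sets is a one-line cardinality count on the coboundary map, and you never actually use the perfect set afterwards. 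Second, the reformulation via a nested exhaustion $G_1\subsetneq G_2\subsetneq\cdots$ and a ``diagonal tree recursion'' is the part you yourself flag as the main obstacle, and you do not carry it out.

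The paper sidesteps this obstacle entirely. Having found, for each $n$, a nonconstant $K_n$-invariant almost-invariant function, it normalizes it to $f_n:G\to\{0,n\}$ with $f_n(1)=0$ (hence $f_n\equiv 0$ on $K_n$), and sets $f=\sum_n f_n$. The sum is locally finite because on $K_m$ only $f_1,\dots,f_{m-1}$ can be nonzero; it is unbounded because $f_n$ attains the value $n$; and for $g\in K_m$ every $f_n$ with $n\ge m$ is exactly $g$-invariant, so $f$ is almost $G$-invariant. No nesting, no recursion, no descriptive set theory. So your core lemma matches the paper's, but your assembly step is both harder and left unfinished, whereas the paper's summation trick closes the argument in a few lines.
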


As already mentioned, finitely generated groups satisfy these conditions. The forward implications are actually straightforward. The theorem is actually  stated and proved in the much more general setting of $G$-sets, see \S\ref{tamalter}.

 Also the alternative fails for uncountable groups: for every uncountable cardinal $\alpha$ there exists a tamely-ended group $G$ of cardinal $\alpha$, with non-metrizable space of ends.

As already mentioned, finitely generated groups have a metrizable space of ends. However, for general countable groups, this is really an alternative. Indeed, the following is proved in \S\ref{s_both}.

\begin{thm}\label{exis}
There exist both infinitely generated countable groups with metrizable and non-metrizable space of ends. For instance,
\begin{itemize}
\item (see Proposition \ref{coupme}) the space of ends of a free group of infinite rank is non-metrizable (and actually it is not tamely-ended);
\item (see Proposition \ref{coume}) for every countable 1-ended group $H$, the space of ends of the free product $H\ast\Z$ is metrizable. For example, the space of ends of the free product $\Q\ast\Z$ is metrizable.
\end{itemize}
\end{thm}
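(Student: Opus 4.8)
Let $F$ be free on an infinite set of generators, among which I fix a countable subset $x_1,x_2,\dots$. The plan is to exhibit a surjective left $F$-almost invariant function $f\colon F\to\N$, which by Definition~\ref{deftame} shows $F$ is untamely-ended and hence, as recalled in the excerpt, forces $\mathcal{E}^\star(F)$ to surject onto the Stone--\v Cech compactification of $\N$, in particular to be non-metrizable. I would set $f(w)=n$ when the reduced word $w$ ends (on the right) in $x_n^{\pm1}$, with $f(1)=1$. The point is that left multiplication affects only the left-hand end of a reduced word: writing $g=t_1\cdots t_m$ and $w=s_1\cdots s_k$ reduced, the reduction of $gw$ cancels the letters $s_1,s_2,\dots$ against $t_m,t_{m-1},\dots$, so the rightmost letter $s_k$ survives unless the \emph{entire} word $w$ is consumed. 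The latter forces $w$ to be the inverse of a suffix of $g$, of which there are at most $m$. Hence $\{w:f(gw)\neq f(w)\}$ is finite for every $g$ (even though balls in $F$ are infinite), so $f$ is left almost invariant, and it is visibly onto $\N$.

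\textbf{Part 2 (the free product $H\ast\Z$).} Write $G=H\ast\langle t\rangle$ with $H$ countable and $1$-ended. Since $G$ is countable, by the Metrizable vs.\ untamely-ended alternative it suffices to prove that $G$ is tamely-ended, i.e.\ that every left $G$-almost invariant $f\colon G\to I$ has finite image (equivalently, one may invoke the metrizability criterion of Theorem~\ref{caract_metri}). I would run the argument on the Bass--Serre tree $T$ of the splitting: its edge set is $G$ with the left regular action, its vertices are the cosets $gH$ and $g\langle t\rangle$, and the star of a vertex is a free regular orbit of its stabiliser (a conjugate of $H$, resp.\ of $\langle t\rangle$).

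First I would extract the local structure. For a vertex $v$, the restriction of $f$ to $\mathrm{Star}(v)$ is almost invariant under $\mathrm{Stab}(v)$; since a finitely-ended group is tamely-ended (an untamely-ended group would embed $\mathcal{P}(\N)$ into its Boolean algebra, impossible when the latter is finite), this restriction has finite image. As $H$ is $1$-ended, on each $H$-star the level sets are finite or cofinite, so $f$ equals a single \emph{dominant value} off a finite exceptional set; as $\langle t\rangle$ is $2$-ended, on each $\Z$-star $f$ is eventually constant in each of the two directions. The key coherence observation is that the dominant values are global: for any $g\in G$ the set $\{h:f(gh)\neq f(h)\}$ is finite and $g\cdot\mathrm{Star}(v)=\mathrm{Star}(gv)$, so left translation carries dominant values to dominant values; since $G$ is transitive on $H$-vertices and on $\Z$-vertices, \emph{all} $H$-stars share one dominant value $\mu_0$, and all $\Z$-stars share one unordered pair $\{\mu^+,\mu^-\}$. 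Thus at most three values occur ``in bulk''.

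\textbf{The main obstacle} is to upgrade this to a finite \emph{image}: a priori the finitely many exceptional edges at each of the infinitely many stars could carry infinitely many distinct values. Equivalently, I must rule out an infinite family of disjoint commensurated subsets of $G$ that $G$ almost-permutes with finite defect --- which is exactly the failure of tameness exploited in Part~1. Here the interplay of the two factors must be used decisively: $1$-endedness of $H$ forbids any ``internal'' splitting of an $H$-star beyond the single value $\mu_0$, while the only source of non-dominant behaviour is the single, $2$-ended direction $t$, whose finite defect set $\{h:f(th)\neq f(h)\}$ links the $\Z$-stars along the tree. I expect the cleanest completion to show that the exceptional set is itself finite, by propagating the finite defect of $t$ across $T$ and using that an exceptional edge is exceptional simultaneously in its $H$-star and its $\Z$-star; alternatively this is precisely the situation covered by Theorem~\ref{caract_metri}, which I would cite to conclude metrizability. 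The special case $H=\Q$ (which is $1$-ended and infinitely generated) is then covered with no extra work, since the argument never assumes $H$ finitely generated.
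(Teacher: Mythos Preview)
Your Part 1 is correct and is exactly the paper's Proposition~\ref{coupme} specialized to a free group: the ``last letter'' function is left almost invariant because left multiplication only disturbs a bounded initial segment of a reduced word.

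Your Part 2, however, has a genuine gap at precisely the point you flag as ``the main obstacle''. You correctly establish that every $H$-star carries a single common dominant value $\mu_0$ (this is essentially the content of the paper's Lemma~\ref{1endfini}: $1$-endedness of $H$ forces every $G$-commensurated subset to be, modulo a finite set on each $H$-coset, left $H$-invariant). But you do not close the argument: you still need to rule out that the finite exceptional sets, one per $H$-star, accumulate infinitely many values in total. Your proposed fixes are both unsubstantiated. ``Propagating the finite defect of $t$ across $T$'' is a hope, not an argument. And invoking Theorem~\ref{caract_metri} requires exhibiting a \emph{finitely generated} subgroup $H'\le G$ whose only $H'$-invariant $G$-commensurated subsets are $\emptyset$ and $G$; you have not named such an $H'$, and nothing in your Bass--Serre analysis produces one (note $H$ itself is typically infinitely generated, e.g.\ $H=\Q$).

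The paper takes a shorter, purely Boolean-algebra route and avoids the obstacle entirely. First (Lemma~\ref{1endfini}), using that a countable $1$-ended $H$ contains an infinite finitely generated subgroup $L$, one shows via Lemma~\ref{actfg} that for any $G$-commensurated $U$ the defect $U\bigtriangleup f(U)$ (where $f(U)$ is the union of the $H$-cosets on which $U$ is cofinite) is not merely finite coset-by-coset but \emph{globally} finite; hence $\mathcal{P}^\star_G(G)\cong\mathcal{P}_{H,(G)}(G)$. Second (Lemma~\ref{cardbool}), since $G=\langle H\cup\{t\}\rangle$, an $H$-invariant commensurated subset is determined by its boundary along the single extra generator $t$, which is a finite subset of $G^2$; so $\mathcal{P}_{H,(G)}(G)$ is countable. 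The missing ingredient in your attempt is exactly the first step: the passage from ``finite exceptional set on each coset'' to ``finite exceptional set globally'', and that is where the finitely generated infinite $L\le H$ (via Lemma~\ref{actfg}) does the real work.
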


The question whether we can characterize algebraically (e.g., in terms of splittings) non-tamely ended groups is natural, see Question \ref{istherechar}.


We also prove:

\begin{thm}[\S\ref{s_nois}]\label{noisola}
For every infinitely-ended group, the space of ends $\mathcal{E}^\star(G)$ has no isolated point. In particular, if metrizable, it is homeomorphic to a Cantor space.
\end{thm}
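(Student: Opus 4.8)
The plan is to translate the statement into Boolean-algebraic language via Stone duality. Since $\mathcal{E}^\star(G)$ is by definition the Stone dual of $\mathcal{P}^\star_G(G)$, its clopen subsets correspond to the elements of $\mathcal{P}^\star_G(G)$. In a Stone space a point $\omega$ is isolated exactly when $\{\omega\}$ is clopen, and by total disconnectedness a minimal nonempty clopen set is automatically a singleton (any two distinct points of a clopen set can be separated by a clopen set, producing a proper nonempty clopen subset). Hence isolated points correspond bijectively to the \emph{atoms} of $\mathcal{P}^\star_G(G)$, i.e.\ to classes of infinite left $G$-commensurated sets $A$ admitting no partition, modulo finite sets, into two infinite left $G$-commensurated pieces. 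So the theorem is equivalent to: for $G$ infinitely-ended, $\mathcal{P}^\star_G(G)$ is \textbf{atomless}. I would first dispose of the case of an atom with \emph{finite} right $G$-orbit: its right stabiliser $H=\{g:Ag\tu A\text{ finite}\}$ has finite index, the finitely many pairwise disjoint atoms $[Ag]$ in the orbit sum to a right $G$-invariant element $S\neq 0$, and $S=1$ would exhibit $1$ as a finite join of atoms, making $\mathcal{P}^\star_G(G)$ finite (hence $G$ finitely-ended, a contradiction). Thus finite-orbit atoms are impossible; this is also immediate from the no-finite-orbit statement Theorem~\ref{nofior}, and $S\notin\{0,1\}$ is separately excluded by Theorem~\ref{abels75}.

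I would then split on local finiteness. If $G$ is locally finite and infinitely-ended, Holt's trichotomy forces $G$ to be infinite countable, whence Protasov's theorem says $\mathcal{E}^\star(G)$ is a Parovi\v{c}enko space, which has no isolated point. This settles the locally finite case at once, and it cannot be handled like the next case because the action there may be trivial (e.g.\ $G$ abelian), so minimality genuinely fails.

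The substantive case is $G$ non-locally-finite, where the clean route is minimality of the right $G$-action on $\mathcal{E}^\star(G)$: the set of non-isolated points is closed and $G$-invariant, so if some point were isolated this set would be a proper closed invariant subset, forcing it by minimality to be empty; then every point is isolated, so by compactness $\mathcal{E}^\star(G)$ is finite, contradicting infinite-endedness. The entire weight of the argument is therefore the input of minimality. Abels established it for infinitely-ended finitely generated groups via Stallings' theorem, and the task is to extend it to all non-locally-finite infinitely-ended groups by feeding the Dicks--Dunwoody splitting into Freudenthal's convergence-action argument. Equivalently, after passing to the Bass--Serre tree of the splitting, an isolated end either has finite orbit (excluded by Theorem~\ref{nofior}) or infinite orbit, and in the latter case one must show the limit set of this non-elementary tree action with finite edge stabilisers is perfect. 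Proving this perfectness in full generality, without the metrizability available in the finitely generated setting, is where I expect the main obstacle to lie.

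Finally, for the ``in particular'' clause: when $\mathcal{E}^\star(G)$ is metrizable it is compact, metrizable, totally disconnected, and, being infinitely-ended, nonempty with no isolated point; Brouwer's topological characterisation of the Cantor set then gives that $\mathcal{E}^\star(G)$ is homeomorphic to the Cantor space.
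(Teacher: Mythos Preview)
Your locally finite case matches the paper exactly: Holt forces countability, then Protasov's Parovi\v{c}enko description rules out isolated points. Your Boolean translation and the ``in particular'' clause via Brouwer are also fine.

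The difference is in the non-locally-finite case. You reduce everything to minimality of the right $G$-action on $\mathcal{E}^\star(G)$ and then candidly flag that extending Abels' minimality result beyond the finitely generated (metrizable) setting is the obstacle you do not resolve. The paper does \emph{not} establish or use minimality for $G$ itself; instead it descends to a finitely generated subgroup. Concretely: given a putative atom $X\in\mathcal{P}^\star_G(G)$, the paper uses Abels' Theorem~\ref{abels75} to find two further infinite commensurated subsets $Z_1,Z_2$ disjoint from $X$ and from each other, then (via Lemma~\ref{actfg} and an infinite finitely generated subgroup $H\le G$) produces a finitely generated subgroup $L$ such that $L\cap X$, $L\cap Z_1$, $L\cap Z_2$ are all infinite. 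Now $L$ has at least three ends, and the image $M$ of $X\cap L$ in $\mathcal{E}^\star(L)$ inherits from the atom property that $Mh$ is either equal to or disjoint from $M$ for each $h\in L$. This contradicts minimality of the $L$-action on $\mathcal{E}^\star(L)$, which \emph{is} available since $L$ is finitely generated.

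So the paper sidesteps exactly the obstacle you isolate: rather than proving perfectness of a limit set in a possibly non-metrizable boundary, it transfers the atom down to a finitely generated subgroup where the classical Freudenthal--Stallings--Abels minimality applies. Your route via global minimality is plausible (and would give a somewhat cleaner statement if carried through), but as written it is a programme, not a proof; the paper's reduction to a finitely generated subgroup is the missing idea that closes the argument.
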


In the non-metrizable case, say for countable groups, can we obtain non-homeomorphic spaces? The answer is yes and we can indeed distinguish spaces of ends of locally finite groups. Recall that the {\bf cellularity} (or Suslin number) of a topological space $T$ is the upper bound of the set of cardinals $\alpha$ such that there is a family $(U_i)_{i\in\alpha}$ of pairwise disjoint nonempty open subsets. For $T$ infinite Hausdorff, it is infinite, and in addition there is an infinite family of pairwise disjoint nonempty open subsets. We say that the space has {\bf countable cellularity} if its cellularity is $\le\aleph_0$; for instance it holds for separable spaces.

\begin{thm}[Theorem \ref{count_cell2}]\label{count_cell}
Let $G$ be a countable group. The following are equivalent.
\begin{enumerate}
\item $G$ is infinite and locally finite;
\item $\mathcal{E}^\star(G)$ has a continuous map onto the Stone-\v Cech boundary of $\N$;
\item $\mathcal{E}^\star(G)$ has a family of $2^{\aleph_0}$ pairwise disjoint nonempty open subsets.
\item $\mathcal{E}^\star(G)$ does not have countable cellularity;
\item $\mathcal{E}^\star(G)$ is not separable.
\end{enumerate}
\end{thm}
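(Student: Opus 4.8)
The plan is to prove the cycle of implications $(1)\Rightarrow(2)\Rightarrow(3)\Rightarrow(4)\Rightarrow(5)\Rightarrow(1)$, so that the entire weight of the statement rests on the last arrow. The forward arrows are soft. For $(1)\Rightarrow(2)$, write an infinite countable locally finite $G$ as an increasing union $\bigcup_n F_n$ of finite subgroups and take $f(g)=\min\{n:g\in F_n\}$; this is left $G$-almost invariant with \emph{finite} fibres, so $S\mapsto[f^{-1}(S)]$ embeds $\mathcal P(\N)/\mathrm{fin}$ into $\mathcal{P}^\star_G(G)$ and dualizes to a continuous surjection of $\mathcal{E}^\star(G)$ onto the Stone--\v{C}ech boundary $\N^\ast$. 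For $(2)\Rightarrow(3)$, fix an almost disjoint family of $2^{\aleph_0}$ infinite subsets of $\N$; their clopen traces on $\N^\ast$ are pairwise disjoint and nonempty, and pulling these back along the surjection of $(2)$ gives $2^{\aleph_0}$ pairwise disjoint nonempty open subsets of $\mathcal{E}^\star(G)$. Then $(3)\Rightarrow(4)$ is the definition of cellularity, and $(4)\Rightarrow(5)$ is the standard fact that a separable space has countable cellularity (a countable dense set meets each member of any disjoint family of nonempty open sets).

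Everything therefore reduces to $(5)\Rightarrow(1)$, which I would prove in the contrapositive and in the sharper form: if $G$ is countable and \emph{not} infinite locally finite, then $\mathcal{E}^\star(G)$ is separable. Two cases are immediate: if $G$ is finite then $\mathcal{E}^\star(G)$ is empty, and if $G$ is finitely generated then $\mathcal{E}^\star(G)$ is metrizable (as recalled in the introduction), hence separable. So I may assume $G$ is infinitely generated but not locally finite. Fix once and for all a finitely generated infinite subgroup $K\le G$; since $G$ is infinitely generated, $K$ has infinite index, so $G$ is a disjoint union of infinitely many left cosets of $K$. Because $K$ is finitely generated, $\mathcal{E}^\star(K)$ is compact metrizable; fix a countable dense set of ends $\{\eta_m\}_{m}$ of $K$ and, for each $m$, a sequence $k^{(m)}_1,k^{(m)}_2,\dots$ of elements of $K$ going to infinity and converging to $\eta_m$ in the end compactification of $K$.

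The candidate countable dense set is $\mathcal D=\{\xi_{g,m}:g\in G,\ m\in\N\}$, where $\xi_{g,m}$ is any end of $G$ (ultrafilter on $\mathcal{P}^\star_G(G)$) containing every class $[A]$ such that $k^{(m)}_i g\in A$ for all large $i$; since $k^{(m)}_i g\to\infty$ in $G$ these classes form a proper filter, so such an end exists by compactness, and $\mathcal D$ is countable because $G$ is. To prove density I must show that every nonempty basic clopen $\widehat{B}$ — that is, every infinite left $G$-commensurated $B\subseteq G$ — contains a point of $\mathcal D$. The key lemma is: \emph{if $B$ is infinite and left $G$-commensurated, then $B\cap Kg_0$ is infinite for some left coset $Kg_0$.} Granting this, set $C=(B\cap Kg_0)g_0^{-1}\subseteq K$; then $C$ is infinite and left $K$-commensurated (its symmetric differences with $K$-translates are traces on $Kg_0$ of the finite sets $\kappa^{-1}B\tu B$), so $\widehat C$ is a nonempty clopen of $\mathcal{E}^\star(K)$ and meets the dense family, say $\eta_m\in\widehat C$. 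Then the chosen sequence $k^{(m)}_i$ is eventually in $C$, hence $k^{(m)}_i g_0$ is eventually in $B$, so $[B]$ lies in the filter defining $\xi_{g_0,m}$, i.e.\ $\xi_{g_0,m}\in\widehat B$.

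The heart of the argument, and the step I expect to be most delicate, is the key lemma. Here I would exploit that commensuration is tested one element at a time: since $B\tu sB$ is finite for each of the finitely many generators $s$ of $K$, there is a \emph{finite} set $F$ outside of which the indicator $\mathbf{1}_B$ is invariant under all generators of $K$; equivalently, on every left coset $Kg$ disjoint from $F$ the function $\mathbf{1}_B$ is constant, since such a coset is a connected copy of the Cayley graph of $K$ carrying no defect. As $F$ is finite it meets only finitely many cosets. If some coset disjoint from $F$ carries the value $1$, that entire infinite coset lies in $B$ and we are done; otherwise $B$ is contained in $F$ together with the finitely many cosets meeting $F$, and as $B$ is infinite one of these must already meet $B$ in an infinite set. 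This is exactly where non-local-finiteness enters: without an infinite finitely generated $K$ there are no infinite cosets to capture, which is precisely what fails for infinite locally finite groups and makes the alternative sharp. The remaining points are routine — that ends of the finitely generated group $K$ are represented by convergent sequences of group elements, and that a clopen of $\mathcal{E}^\star(K)$ pulls back to a commensurated subset of $K$ — both standard features of the end compactification of a finitely generated group. Finally, countability of $G$ is used only to keep $\mathcal D$ countable, consistent with the failure of the statement for uncountable groups.
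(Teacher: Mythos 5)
Your proposal is correct and takes essentially the same route as the paper: the forward implications use the same soft arguments (the map $g\mapsto\min\{n:g\in F_n\}$ with finite fibers, then an almost disjoint family), and the substance of $(5)\Rightarrow(1)$ is the paper's Lemma \ref{infcysep} applied via Theorem \ref{densalpha}. Your ``key lemma'' is exactly the paper's Lemma \ref{actfg}, and your countable dense set of ends $\xi_{g,m}$ obtained from convergent sequences in the cosets $Kg$ is just an explicit description of the paper's dense union $\bigcup_g \mathcal{E}^\star_K(Kg)$ of separable clopen pieces.
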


Note that for a Stone space with Boolean algebra $A$, being separable is equivalent to the condition that $A=\{0\}$ or $A$ is isomorphic to a (unital) subalgebra of $2^\N$.

For uncountable groups, we have:

\begin{thm}[Theorem \ref{cell_uncount2}]\label{cell_uncount}
Let $G$ be an infinitely-ended group of uncountable cardinal $\alpha$. Then the cellularity of $\mathcal{E}^\star(G)$ is $\alpha$.


In particular, $\mathcal{E}^\star(G)$ is not separable (and hence not metrizable).
\end{thm}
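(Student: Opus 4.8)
The plan is to prove the two inequalities $c(\mathcal{E}^\star(G))\ge\alpha$ and $c(\mathcal{E}^\star(G))\le\alpha$ separately, working throughout with the Boolean algebra $B=\mathcal{P}^\star_G(G)$ whose Stone dual is $\mathcal{E}^\star(G)$. Since clopen sets form a basis of a Stone space, the cellularity of $\mathcal{E}^\star(G)$ equals the supremum of the cardinalities of families of pairwise disjoint nonzero elements of $B$; concretely, families $(A_i)$ of infinite left $G$-commensurated subsets with $A_i\cap A_j$ finite for $i\neq j$. First I record that $G$ is not locally finite: an uncountable locally finite group is $1$-ended by Holt's theorem, hence not infinitely-ended. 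So I may fix a finitely generated infinite subgroup $H=\langle S\rangle$ with $S$ finite, and I use repeatedly that $H$ is countable, so that the set $H\backslash G$ of right cosets has cardinality $\alpha$ while the Boolean algebra $\mathcal{P}^\star_H(H)$ of $H$-commensurated subsets of $H$ modulo finite is countable.

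For the lower bound I would invoke the Stallings--Dicks--Dunwoody structure theorem: being non-locally-finite and infinitely-ended, $G$ acts on a tree $T$ with finite edge stabilizers and splits as $A\ast_C B$ or $A\ast_C$ with $C$ finite. As $|C|<\aleph_0$ and $|G|=\alpha$, a vertex group on one side, say $A$, has cardinality $\alpha$, so its vertex $v_0$ has $[A:C]=\alpha$ incident edges $(e_\xi)_{\xi<\alpha}$. Fixing an edge $e_0$ at $v_0$, I attach to each edge $e$ the half-tree set $\hat A_e=\{g\in G:\ g e_0\in E(T_e)\}$, where $T_e$ is the component of $T\smallsetminus e$ not containing $v_0$. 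The key point is that $\hat A_e$ is right $G$-commensurated: for $s\in G$ the set $\hat A_e s\,\tu\,\hat A_e$ consists of those $g$ for which $g e_0$ and $g(s^{-1}e_0)$ lie on opposite sides of $e$, i.e.\ for which $g^{-1}e$ lies on the geodesic from $e_0$ to $s^{-1}e_0$; as that geodesic is finite and $\mathrm{Stab}(e_0)=C$ is finite, only finitely many $g$ qualify (local infiniteness of $T$ is harmless here). The far half-trees of the $\alpha$ edges at $v_0$ are pairwise disjoint subtrees, so the sets $\hat A_{e_\xi}=a_\xi\hat A_{e_0}$ (for coset representatives $a_\xi$ of $A/C$) are pairwise disjoint infinite right-commensurated subsets. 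Applying the inversion $A\mapsto A^{-1}$, a Boolean isomorphism between the right- and left-commensurated algebras and hence a homeomorphism of the corresponding spaces of ends, converts these into $\alpha$ pairwise almost disjoint elements of $B$; thus $c(\mathcal{E}^\star(G))\ge\alpha$.

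The upper bound is the crux. Suppose for contradiction there is a pairwise almost disjoint family $(A_i)_{i\in I}$ of infinite commensurated sets with $|I|=\alpha^+$. For each $i$ record the finite datum $(sA_i\,\tu\,A_i)_{s\in S}$; there are only $\alpha$ possible values, so by regularity of $\alpha^+$ I pass to $I'\subseteq I$ of size $\alpha^+$ on which this datum is constant. Then for $i,j\in I'$ the symmetric difference $B_{ij}=A_i\,\tu\,A_j$ satisfies $sB_{ij}=B_{ij}$ for all $s\in S$, hence is left $H$-invariant, i.e.\ a union of right cosets $Hg$. Fixing a coset $c=Hg$ and setting $a^{(i)}_c=[A_i\cap c]\in\mathcal{P}^\star_H(H)$ (identifying $c$ with $H$), $H$-invariance of $B_{ij}$ forces, on each $c$, either $A_i\cap c=A_j\cap c$ (then both traces lie in the finite set $A_i\cap A_j$, so $a^{(i)}_c=a^{(j)}_c=0$) or $A_i\cap c$ and $A_j\cap c$ partition $c$ (so $a^{(i)}_c=\overline{a^{(j)}_c}$). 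A short case analysis using $|I'|\ge 3$ shows that at each coset $c$ every value $a^{(i)}_c$ lies in $\{0,1\}$ and at most one index has value $1$.

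To finish I would count: the ``special'' index of value $1$ at a coset defines a partial map from the $\alpha$ cosets to $I'$, so at most $\alpha$ indices are ever special; the remaining $I''\subseteq I'$ of size $\alpha^+$ satisfy $a^{(i)}_c=0$, i.e.\ $A_i\cap c$ is finite, for every coset $c$. But then for $i,j\in I''$ the set $B_{ij}$ is $H$-invariant and meets each infinite coset in a finite set, forcing $B_{ij}=\emptyset$, i.e.\ $A_i=A_j$ --- contradicting that distinct antichain elements differ modulo finite. Hence no antichain has size $>\alpha$, so $c(\mathcal{E}^\star(G))\le\alpha$. The separability and metrizability statements are then immediate, since $\alpha>\aleph_0$ exceeds the cellularity of any separable, a fortiori any compact metrizable, space. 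The main obstacle is the upper bound, and specifically the idea of freezing the finite $S$-coboundary data to make pairwise symmetric differences $H$-invariant, after which the coset-by-coset dichotomy and the cardinality count do the remaining work.
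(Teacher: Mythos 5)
Your proof is correct, and it splits naturally into a half you share with the paper and a half where you take a different route. The lower bound is essentially the paper's own argument (Theorem \ref{cell_uncount2}): act on the Stallings--Dicks--Dunwoody tree, take half-tree sets attached to the $\alpha$ edges at a vertex whose stabilizer has cardinal $\alpha$, and observe they are pairwise disjoint commensurated sets; you work with right-commensurated sets and left translates and then invert, while the paper works directly with the left-commensurated set $M=\{g:g^{-1}e\in e^\sharp\}$ and its right translates $Mh$, but these are the same construction up to the inversion anti-isomorphism. For the upper bound the paper proceeds differently: it proves the stronger statement that $\mathcal{E}^\star(G)$ has a \emph{dense} subset of cardinal $\le\alpha$ (Theorem \ref{densalpha}, via Lemma \ref{infcysep}: decompose $G$ into right cosets of a finitely generated infinite subgroup $H$, note each coset contributes a separable end space, and show the union of these is dense), from which the cellularity bound follows; this extra strength is what feeds the corollary that the density also equals $\alpha$. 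Your argument bounds the cellularity directly by a pigeonhole on an antichain of size $\alpha^+$: freezing the coboundary data $(sA_i\bigtriangleup A_i)_{s\in S}$ forces the pairwise symmetric differences to be left $H$-invariant, and the coset-by-coset dichotomy plus the ``at most one special index per coset'' count finishes it. This is close in spirit to the paper's Proposition \ref{inforcoce} (which pigeonholes instead on the finite exceptional sets supplied by Lemma \ref{actfg} to make the pairwise intersections $H$-invariant off a fixed $HF$), though that proposition is not the route the paper formally uses for this theorem. In short: same lower bound, a valid but strictly weaker-output upper bound (cellularity only, not density), with all the cardinality bookkeeping ($\alpha$ possible finite data, $\alpha$ cosets, regularity of $\alpha^+$) checking out.
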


\begin{cor}
For every infinitely-ended group $G$, the end weight of $G$ is $\ge |G|$.\qed
\end{cor}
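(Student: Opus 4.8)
The plan is to split into two cases according to whether $G$ is countable or uncountable, and in both cases to exploit the general topological inequality that the cellularity of a space is bounded above by its weight.

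First I would recall this inequality. Let $\mathcal{B}$ be a base for the topology of $\mathcal{E}^\star(G)$ of minimal cardinality (by definition equal to the end weight), and let $(U_i)_{i\in\alpha}$ be any family of pairwise disjoint nonempty open subsets. Choosing for each $i$ a nonempty basic open set $B_i\subseteq U_i$ produces pairwise distinct elements of $\mathcal{B}$, since the $U_i$ are pairwise disjoint; hence $\alpha\le|\mathcal{B}|$. Taking the supremum over all such families shows that the cellularity of $\mathcal{E}^\star(G)$ is at most its weight, i.e.\ at most the end weight of $G$.

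For uncountable $G$, Theorem \ref{cell_uncount} asserts that the cellularity of $\mathcal{E}^\star(G)$ equals $|G|$, so the inequality above immediately yields that the end weight is $\ge|G|$. For countable $G$, since $G$ is infinitely-ended it is in particular infinite, so $|G|=\aleph_0$; and being infinitely-ended means precisely that the Boolean algebra $\mathcal{P}^\star_G(G)$ is infinite, whence its dimension over $\Z/2\Z$---which is the end weight---is infinite, i.e.\ $\ge\aleph_0=|G|$. This settles both cases.

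There is no genuine obstacle here: the substantive content is entirely carried by Theorem \ref{cell_uncount}, and what remains is only the elementary observation relating cellularity and weight, together with the trivial countable case. The one point worth stating carefully is that one wants the cellularity-versus-weight comparison phrased for the Stone space $\mathcal{E}^\star(G)$ rather than directly for the Boolean algebra, which is why I would spell out the choice of basic open sets $B_i$ above.
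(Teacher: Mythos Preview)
Your proof is correct and follows exactly the reasoning the paper intends: the corollary is stated with a bare \qed immediately after Theorem~\ref{cell_uncount}, so the paper's ``proof'' is just the implicit observation that cellularity bounds weight from below (for the uncountable case) together with the trivial fact that an infinitely-ended countable group has infinite end weight. You have spelled out precisely these two steps.
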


\begin{rem}
For an infinitely-ended group $G$ of cardinal $\alpha$, the weight of $\mathcal{E}^\star(G)$, i.e. the cardinal of $\mathcal{P}_G^\star(G)$, thus belongs to $[\alpha,2^\alpha]$. For each infinite cardinal $\alpha$, each bound is achieved by some group, by Proposition \ref{coupme} (upper bound $2^\alpha$) and Proposition \ref{coume} (lower bound $\alpha$). 
\end{rem}



\subsection{Some open questions}

It follows from the previous results that we can split the class of countable groups into six classes $\mathcal{M}_0$, $\mathcal{M}_2$, $\mathcal{M}_1$, $\mathcal{M}_\infty$, $\mathcal{X}$, and $\mathcal{U}$, where
\begin{itemize}
\item $\mathcal{M}_i$ is the class of countable groups for which the space of ends is metrizable and has $i$ points (when $i=\infty$, this is a Cantor space by Theorem \ref{noisola}).
\item $\mathcal{U}$ is the class of countable groups for which the space of ends is not separable. By Theorem \ref{count_cell}, it coincides with the class of infinite countable locally finite groups. By the Banakh-Zarichnyi theorem (see Remark \ref{BanaZari}), they provide a single homeomorphism type of space of ends.
\item $\mathcal{X}$ is the remaining class, which, by Theorem \ref{count_cell}, consists of those countable groups for which the space of ends is separable but not metrizable (countable is redundant by Theorem \ref{cell_uncount}). The class $\mathcal{X}$ notably contains free products of infinite sequences of nontrivial groups.
\end{itemize}
Among these six classes, all but $\mathcal{X}$ yield a unique space of ends up to homeomorphism. Those in $\mathcal{X}$ provide non-metrizable, separable Stone spaces with no isolated points, but it is not clear if this yields a single homeomorphism type, or many different ones.

\begin{que}
How many homeomorphism types do there exists of spaces of ends of countable groups in the class $\mathcal{X}$? (the answer is a cardinal in $[1,2^{\aleph_0}]$.)
\end{que}

This question can be specified: for instance, I do not know whether $\Z^{\ast \N}$ (free group on countably many generators) and the countable free product $(\Z^2)^{\ast \N}$ have homeomorphic space of ends.

By the Stallings-Dicks-Dunwoody theorem, the class $\mathcal{M}_2\sqcup\mathcal{M}_\infty\sqcup\mathcal{X}$ consists of those countable groups that split over a finite subgroup. It is natural to ask whether there is a similar characterization of the class $\mathcal{X}$.

\begin{que}\label{istherechar}
Is there a characterization of groups in the class $\mathcal{X}$, among countable groups, in terms of splittings over finite groups (or equivalently in terms of actions on trees with finite edge stabilizers)?
\end{que}

A rough expectation is that $\mathcal{X}$ should be related to infinite splittings over finite subgroups. Let me ask anyway a more precise question in the torsion-free case:

\begin{que}
Let $G$ be a countable torsion-free group. Is it true that $G$ is metrizably-ended if and only if it has finite Kurosh rank (i.e.,  splits as a free product of finitely many freely indecomposable groups)?
\end{que}

Groups of infinite Kurosh rank can be fairly complicated: indeed Kurosh \cite{Kur} constructed a countable, torsion-free group that does not split as a (finite or infinite) free product of freely indecomposable groups, namely the group $\Gamma$ with presentation
\[\langle a_n:n\ge 0,\,b_n:n\ge 1\mid a_nb_na_n^{-1}b_n^{-1}=a_{n-1}:n\ge 1\rangle.\]
Indeed $\Gamma$ is isomorphic to $\Gamma\ast\Z$ and has the property that all its freely indecomposable subgroups are cyclic, but is not free, because it is not residually nilpotent.


One more natural notion is that of cofinality, in the following sense: say that a Boolean algebra has countable cofinality if it can be written as a strictly increasing sequence of subalgebras, and uncountable cofinality otherwise. For instance, a countable Boolean algebra has countable cofinality if and only if it is infinite. This notion was notably studied by Koppelberg \cite{Kop}; see also \cite[\S 12]{VDB} (which also discusses interesting related cofinality notions). For the Stone space, it corresponds to the property of being an inverse limit of a sequence of proper quotients. A related notion for the Boolean algebra is the property of having an infinite countable quotient (which implies countable cofinality). For the Stone space, this means the existence of an infinite, closed metrizable subset, or, equivalently, of a converging injective sequence. 

\begin{que}For which groups $G$ does the Boolean algebra $\mathcal{P}^\star_G(G)$ (resp.\ $\mathcal{P}_{(G)}(G)$) have countable cofinality? For which groups does it admit an infinite countable quotient? 
\end{que}

Note that the latter question is equivalent to asking whether the space of ends $\mathcal{E}^\star(G)$ has an injective converging sequence.

For instance, if $G$ is a countable locally finite group then $\mathcal{P}^\star_G(G)$ has uncountable cofinality; this follows from the previous description and \cite[Theorem 1]{Kop}. In general, I do not know the answer for those countable groups with non-metrizable and separable space of ends. 
The referee suggested that the answer should be positive in the case of a free group of infinite countable rank (which was explicitly asked in a preliminary version), by embedding the space of ends of a finitely generated free subgroup. This is indeed the case, by virtue of the following:

\begin{prop}[See Proposition \ref{freepro_inj}]\label{free_prod_inj_intro}
Let $G$ be a group with a free product decomposition $H\ast L$ where the space of ends of $H$ is a Cantor space. Then the induced map from $\mathcal{E}^\star(H)$ to $\mathcal{E}^\star(G)$ is injective, and in particular the space of ends of $G$ includes a Cantor space and the Boolean algebra $\mathcal{P}_{G}^\star(G)$ has a infinite countable quotient (namely $\mathcal{P}_{H}^\star(H)$).
\end{prop}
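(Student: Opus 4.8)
The plan is to pass to Boolean algebras via Stone duality and then realize the embedding by an explicit extension of commensurated subsets. First I would identify the induced map $\mathcal{E}^\star(H)\to\mathcal{E}^\star(G)$ as the Stone dual of the restriction homomorphism $\pi\colon\mathcal{P}^\star_G(G)\to\mathcal{P}^\star_H(H)$, $A\mapsto A\cap H$. This is well defined: if $A$ is left $G$-commensurated then for $h\in H$ one has $h(A\cap H)\tu(A\cap H)=(hA\tu A)\cap H$, which is finite, so $A\cap H$ is left $H$-commensurated, and finite sets restrict to finite sets. Since a continuous map of Stone spaces is injective exactly when its dual homomorphism is surjective, the whole statement reduces to showing that $\pi$ is surjective, i.e. that every left $H$-commensurated $B\subseteq H$ equals $A\cap H$ for some left $G$-commensurated $A\subseteq G$.

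To build the extension I would use the normal form in the free product: write each $g\neq 1$ uniquely as a reduced word $s_1\cdots s_n$ whose syllables lie alternately in $H\setminus\{1\}$ and $L\setminus\{1\}$, and set
\[A:=\{\,g\neq 1:\ \text{the last syllable } s_n \text{ lies in } H \text{ and belongs to } B\,\}\cup(\{1\}\cap B),\]
so that words ending in an $L$-syllable are always excluded. By construction $A\cap H=B$ exactly. The crucial design choice is to read the \emph{last} syllable rather than the first: left multiplication by a fixed element alters only the initial syllables of a long reduced word and leaves the terminal syllable untouched, which is precisely the stability that left-commensuration demands. A first-syllable definition fails because left multiplication scrambles initial syllables en masse.

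The main work, and the expected obstacle, is verifying that $A$ is left $G$-commensurated. It suffices to check that $g_0A\tu A$ is finite for $g_0$ in the generating set $H\cup L$, since the commensurated subsets form a translation-stable Boolean subalgebra (using $g_0g_1A\tu A\subseteq g_0(g_1A\tu A)\cup(g_0A\tu A)$ and $g_0^{-1}A\tu A=g_0^{-1}(g_0A\tu A)$). Fix such a $g_0$, which has syllable length $\le 1$. For any $x$ of syllable length $\ge 2$ whose translate $g_0^{-1}x$ again has syllable length $\ge 2$, the terminal syllables of $x$ and $g_0^{-1}x$ coincide, so $x\notin g_0A\tu A$. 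Hence $g_0A\tu A$ is supported on words of syllable length $\le 1$ together with their $g_0$-translates, where a short case analysis finishes: an $L$-ending word never lies in $A$, so those cases contribute at most one or two exceptional elements, while the only genuinely delicate case is $x\in H$ with $g_0=h_0\in H$, where membership flips exactly on $B\tu h_0B$ — which is finite precisely because $B$ is $H$-commensurated. Thus $g_0A\tu A$ is finite in every case (reducing to at most $\{1,g_0\}$ when $g_0\in L$).

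Finally I would assemble the conclusions. Surjectivity of $\pi$ yields injectivity of $\mathcal{E}^\star(H)\to\mathcal{E}^\star(G)$; being a continuous injection from a compact space into a Hausdorff space, it is a topological embedding, so its image is a copy of the Cantor space $\mathcal{E}^\star(H)$ sitting inside $\mathcal{E}^\star(G)$. Dually, $\pi$ exhibits $\mathcal{P}^\star_H(H)$ as a quotient of $\mathcal{P}^\star_G(G)$; since $\mathcal{E}^\star(H)$ is a Cantor space, $\mathcal{P}^\star_H(H)$ is the countable atomless Boolean algebra, hence an infinite countable quotient, as claimed.
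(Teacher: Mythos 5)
Your proof is correct and follows essentially the same route as the paper: reduce by Stone duality to surjectivity of the restriction map $\mathcal{P}^\star_G(G)\to\mathcal{P}^\star_H(H)$, and extend a left $H$-commensurated set $B\subseteq H$ to $G$ by reading the terminal $H$-syllable of the reduced word (the paper's ``$H$-suffix''), with the same observation that left multiplication by a generator only disturbs elements of syllable length $\le 1$. The only cosmetic difference is that the paper assigns suffix $1$ to words ending in an $L$-syllable (so they are included iff $1\in B$) whereas you always exclude them; both choices yield a commensurated extension.
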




\subsection{Bi-ends}

Next, let us mention a forgotten notion of ends, somewhat more elementary than the usual notion, but which is worth developing here. It consists in considering groups $G$ as $G\times G$-sets, rather than $G$-sets under left multiplication. This study appeared in work \cite{ScS} of Scott and Sonneborn (1963), who did not make the connection with the theory of ends.

 The notion of $G$-commensurated subset makes sense in an arbitrary $G$-set; here the group is $G\times G$ and the set is $G$ under left-and-right multiplication. Thus, we call a bi-commensurated subset of $G$ a subset $X$ such that $gX\tu X$ and $Xg\tu X$ are finite for every $g\in G$. The set $\mathcal{P}_{(G\times G)}(G)$ of bi-commensurated subsets is a Boolean subalgebra of $\mathcal{P}_{(G)}(G)$. Its quotient by the ideal of finite subsets is denoted $\mathcal{P}_{G\times G}^\star(G)$, is a Boolean subalgebra of $\mathcal{P}_{G}^\star(G)$, namely it precisely consists of the fixed points of the right $G$-action. Thus the Stone dual $\mathcal{E}^\star_{G\times G}(G)$ of $\mathcal{P}_{G\times G}^\star(G)$ is a quotient of $\mathcal{E}^\star(G)$; we call it the {\bf space of bi-ends} of $G$. (It is actually the quotient of $\mathcal{E}^\star(G)$ by the right $G$-action in the category of Stone spaces.) We then call the cardinal of $\mathcal{E}^\star_{G\times G}(G)$ the number of bi-ends of $G$; it is bounded above by the number of ends.

Let us now state the theorem, since the statement has not been written down, and insisting that it does not rely on Stallings' theorem. For a property $P$, recall that a group is $P$-by-$Q$ if it has a normal subgroup with Property P such that the quotient satisfies Property Q. It is known, and originally observed by Wall \cite[Lemma 4.1]{Wa}, that a $\Z$-by-finite (=infinite virtually cyclic) group is either finite-by-$\Z$ or finite-by-$D_\infty$, where $D_\infty$ denotes the infinite dihedral group. These 2-ended groups turn out to be distinguished by bi-ends.



\begin{thm}[following from Freudenthal, Scott-Sonneborn]\label{tbiends}
Let $G$ be a group, viewed as a $G\times G$-set under left-right translation. Then
\begin{enumerate}
\item\label{scs1} If $G$ is finite, it has 0 bi-end;
\item\label{scs2} if $G$ is finite-by-$\Z$, it has 2 bi-ends;
\item\label{scs3} if $G$ is countable and locally finite, then it has $2^{2^{\aleph_0}}$ bi-ends (and actually its space of bi-ends has a continuous surjection onto the Stone-\v Cech boundary of $\N$);
\item\label{scs4} otherwise, $G$ has a single bi-end. Thus ``otherwise" means that one of the following holds:
\begin{itemize}
\item $G$ is finitely generated and not virtually cyclic, or
\item $G$ is finite-by-$D_\infty$;
\item $G$ is countable, infinitely generated and not locally finite, or
\item $G$ is uncountable.
\end{itemize}
\end{enumerate}
\end{thm}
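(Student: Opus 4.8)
The plan is to compute, in each case, the Boolean algebra $\mathcal{P}^\star_{G\times G}(G)$ of bi-commensurated subsets modulo finite sets, using that the number of bi-ends is the number of its atoms when it is finite, and that in general $\mathcal{E}^\star_{G\times G}(G)$ is a quotient of $\mathcal{E}^\star(G)$, so that the number of bi-ends never exceeds the number of ends. Part~\ref{scs1} is immediate: if $G$ is finite then every subset is finite, $\mathcal{P}^\star_{G\times G}(G)=0$, and there are no bi-ends. For part~\ref{scs2}, write $\pi\colon G\to\Z$ for the quotient by a finite normal subgroup and set $A=\pi^{-1}(\Z_{\geq 0})$. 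A direct check gives $gA=Ag=\pi^{-1}(\Z_{\geq\pi(g)})$, so $gA\tu A$ and $Ag\tu A$ are contained in a finite union of fibres of $\pi$; thus $A$ is bi-commensurated and neither finite nor cofinite. Since $G$ is $2$-ended (being virtually cyclic), the algebra $\mathcal{P}^\star_G(G)$ has exactly two atoms, and its subalgebra $\mathcal{P}^\star_{G\times G}(G)$ strictly contains $\{0,1\}$ because $[A]\notin\{0,1\}$; a two-atom Boolean algebra has no intermediate subalgebra, so $\mathcal{P}^\star_{G\times G}(G)=\mathcal{P}^\star_G(G)$ and $G$ has $2$ bi-ends.

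For part~\ref{scs3} I would reproduce the Scott--Sonneborn construction. Exhaust $G$ by a strictly increasing sequence of finite subgroups $F_0\subset F_1\subset\cdots$ (possible since $G$ is countable and locally finite), and for $S\subseteq\N$ put $A_S=\bigcup_{n\in S}(F_{n+1}\setminus F_n)$. Given $g\in G$, choose $m$ with $g\in F_m$; then $g$ preserves setwise each shell $F_{n+1}\setminus F_n$ with $n\geq m$ under both left and right multiplication, so $gA_S\tu A_S$ and $A_S g\tu A_S$ are contained in the finite set $F_m$. Hence each $A_S$ is bi-commensurated, and $S\mapsto[A_S]$ is a Boolean embedding of $\mathcal{P}(\N)/\mathrm{fin}$ into $\mathcal{P}^\star_{G\times G}(G)$, injective because every shell is nonempty. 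Dualizing, $\mathcal{E}^\star_{G\times G}(G)$ maps continuously onto the Stone dual of $\mathcal{P}(\N)/\mathrm{fin}$, i.e. onto the Stone-\v Cech boundary of $\N$; in particular it has at least $2^{2^{\aleph_0}}$ points, while being a quotient of $\mathcal{E}^\star(G)$ with $|\mathcal{P}^\star_G(G)|\leq 2^{\aleph_0}$ it has at most $2^{2^{\aleph_0}}$ points, so exactly $2^{2^{\aleph_0}}$.

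Part~\ref{scs4} I would split into three sub-cases whose union is the stated list, using that a $2$-ended group is virtually cyclic, hence finitely generated and countable, and that an uncountable locally finite group is $1$-ended (Holt). If $G$ is $1$-ended, then $\mathcal{E}^\star(G)$ is a single point and so is its quotient $\mathcal{E}^\star_{G\times G}(G)$. If $G$ is infinitely-ended and not locally finite, then by Theorem~\ref{abels75}, in the restated form that the only right-$G$-fixed points of $\mathcal{P}^\star_G(G)$ are $0$ and $1$, the bi-commensurated subsets are exactly the finite and cofinite ones, giving one bi-end. The remaining sub-case is $G$ finite-by-$D_\infty$, again $2$-ended, treated below. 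One then checks that each listed bullet falls into one of these: a finitely generated non-virtually-cyclic group is $1$-ended or infinitely-ended (and, being infinite and finitely generated, not locally finite); a countable infinitely generated non-locally-finite group is $1$-ended or infinitely-ended (it cannot be $2$-ended); and an uncountable group is $1$-ended, or infinitely-ended and not locally finite (it cannot be $2$-ended, and if locally finite it is $1$-ended).

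The main obstacle is the finite-by-$D_\infty$ sub-case, which is exactly what bi-ends must use to separate part~\ref{scs2} from part~\ref{scs4}: one must show that the right $G$-action on the two-point space $\mathcal{E}^\star(G)$ of a $2$-ended group is trivial when $G$ is finite-by-$\Z$ and swaps the two ends when $G$ is finite-by-$D_\infty$. The subtlety is that for a finite-by-$D_\infty$ group the two ends are \emph{not} represented by the two ``half-line'' sets, which fail to be commensurated precisely because a reflection reverses orientation; the two atoms of $\mathcal{P}^\star_G(G)$ are instead represented by ``diagonal'' commensurated sets, and right multiplication by a reflection interchanges them. Establishing this swap --- equivalently, that no nontrivial commensurated subset is right-invariant modulo finite sets once an orientation-reversing element is present --- is the crux, and is where Freudenthal's analysis of the action on the space of ends enters. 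Once it is in place, $\mathcal{E}^\star_{G\times G}(G)$ is the one-point quotient of the two-point space $\mathcal{E}^\star(G)$, completing the finite-by-$D_\infty$ sub-case and hence part~\ref{scs4}.
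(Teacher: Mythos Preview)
Your treatment of parts~\ref{scs1}--\ref{scs3} matches the paper's: the finite case is trivial, the finite-by-$\Z$ case is handled by the explicit half-line, and the countable locally finite case is exactly Proposition~\ref{notame} (the Scott--Sonneborn shell construction).

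For part~\ref{scs4} you take a genuinely different route. The paper uses the Scott--Sonneborn ``main lemma'' (a group with an infinite subgroup of index $|G|$ generated by fewer than $|G|$ elements is $1$-bi-ended), which directly covers the uncountable case and the infinitely generated non-locally-finite case; for finitely generated infinitely-ended groups it invokes Freudenthal's Theorem~\ref{frenontor} to produce an infinite cyclic subgroup of infinite index, feeding back into the same lemma. You instead dispatch all infinitely-ended non-locally-finite groups in one stroke via Abels' Theorem~\ref{abels75}. Your route is cleaner and shorter; the paper's route is more elementary and, as the paper emphasizes, avoids Stallings' theorem entirely (whereas the minimality ingredient behind Abels' result, as used in the paper's own Proposition~\ref{isolaspecker}, does go through Stallings).

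Where you hesitate --- the finite-by-$D_\infty$ sub-case --- is actually the easiest part, and does not need ``Freudenthal's analysis of the action on the space of ends''. Your description there is slightly garbled: in $D_\infty=\langle r,s\mid s^2=1,\ srs=r^{-1}\rangle$, the set $B=\{r^n,sr^n:n\ge 0\}$ \emph{is} left-commensurated (it is the natural half-line, not a ``diagonal'' set), and a one-line computation gives $Bs=\{r^{-n},sr^{-n}:n\ge 0\}$, which is cofinite in the complement of $B$. Thus right multiplication by $s$ swaps the two atoms of $\mathcal{P}^\star_G(G)$, so $\mathcal{P}^\star_{G\times G}(G)=\{0,1\}$. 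For a general finite-by-$D_\infty$ group, pull back along the quotient map $\pi\colon G\to D_\infty$: the set $\pi^{-1}(B)$ is left-commensurated in $G$, and right multiplication by any preimage of $s$ again swaps the two ends. The paper simply cites \cite[Theorem~3(1)]{ScS} for this; either way, it is an elementary computation rather than an obstacle.
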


See \S\ref{biends} for the proof and a discussion.
Note that, in some sense, Theorem \ref{tbiends} shows that the theory of bi-ends ``wipes out" almost all the richness of the theory of infinitely-ended groups and its connection with splittings.


\subsection{Results for locally compact groups}

The transition of the history of ends of groups from papers in German language by Freudenthal, Hopf, Specker, and finally Abels, to Stallings and its successors in English language, has a noticeable aftermath in the next 30 years: the initial framework of topological groups has been narrowed to discrete groups (or, separately, to connected Lie groups, for which the infinitely-ended case does not occur). 

Let $G$ be an {\bf LC-group} (locally compact group). First, let us start by giving the relevant definitions in this context, following Specker \cite{Sp}. For $G$ an LC-group (locally compact group), we do not exactly copy the definition from the discrete case, because we need a little uniformity in the definition of $G$-commensurated subset. Namely, $\mathcal{P}_{(G)}(G)$ is defined as the set of subsets $X$ of $G$ such that $KX\smallsetminus K$ has compact closure for every compact subset $K$ (call them topologically left-commensurated subsets). This is a Boolean subalgebra of $\mathcal{P}(G)$, containing the ideal of subsets with compact closure, and its image modulo this ideal is denoted as $\mathcal{P}_G^\star(G)$. The Stone dual $\mathcal{E}^\star(G)$ of the latter is the space of ends of $G$. The right action of $G$ induces a continuous action of $G$ on $\mathcal{E}^\star(G)$.

The most basic results extend : the space of ends has 0, 1, 2, or infinitely many points \cite{Sp}. When $G$ is compactly generated, it is metrizable \cite{Abe}; if moreover $G$ is infinitely-ended, it is homeomorphic to a Cantor space \cite{Ab77}. That it has no isolated point actually holds without the compact generation assumption:

\begin{thm}\label{noisola2}
For every infinitely-ended LC-group, the space of ends $\mathcal{E}^\star(G)$ has no isolated point.
\end{thm}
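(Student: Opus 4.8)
The plan is to work entirely on the Boolean-algebra side. By Stone duality, the isolated points of $\mathcal{E}^\star(G)$ correspond exactly to the atoms of $B:=\mathcal{P}_G^\star(G)$, so the theorem is equivalent to the assertion that $B$ is \emph{atomless} whenever $G$ is infinitely-ended. Here an atom is a class $e=[X]$ of a topologically left-commensurated subset $X$ whose closure is non-compact, such that $X$ is \emph{indivisible}: for every commensurated $Y\subseteq X$, either $\overline Y$ or $\overline{X\smallsetminus Y}$ is compact (in the LC setting ``compact closure'' systematically replaces ``finite''). So I must rule out the existence of any atom.

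First I would record the elementary \textbf{finite-atomic obstruction}: if $1$ is a join of finitely many atoms $e_1\vee\dots\vee e_n$, then every $y\in B$ equals $\bigvee_i (y\wedge e_i)$ with each $y\wedge e_i\in\{0,e_i\}$, so $B\cong(\Z/2\Z)^n$ and $G$ is $n$-ended with $n\le 2$. In particular, \emph{if $e$ and $e^c$ are both atoms then $B=\{0,e,e^c,1\}$ and $G$ is $2$-ended}; since $G$ is infinitely-ended, for any atom $e$ the complement $e^c$ is not an atom. Next I would exploit the right $G$-action. If $e=[X]$ is an atom, then each right translate $Xg$ is again left-commensurated and $[Xg]$ is an atom. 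Since $X\cap Xg\subseteq X$ is commensurated, indivisibility of $X$ forces a dichotomy: either $\overline{X\cap Xg}$ is compact, so $e$ and $[Xg]$ are distinct disjoint atoms, or $\overline{X\smallsetminus Xg}$ is compact, so $e\le[Xg]$ and hence $e=[Xg]$, i.e. $g\in H:=\{g: \overline{Xg\tu X}\text{ compact}\}$. Thus the orbit $e\cdot G$ consists of pairwise disjoint atoms indexed by $H\backslash G$, and $X$ is left-$G$-invariant and right-$H$-invariant modulo compact sets.

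The finish splits according to the size of the orbit, and here lies the \textbf{main obstacle}. If $e$ has a finite orbit, then $e$ is a nontrivial fixed point (or a finite orbit) of the right action, which is forbidden by Theorem~\ref{abels75}/Theorem~\ref{nofior} when $G$ is not locally finite; the locally finite infinitely-ended case is precisely the infinite countable locally finite case, already covered by Protasov's theorem (a Parovi\v cenko space has no isolated point). The genuinely delicate scenario is that of an \emph{infinite} family of atoms (an atomic but infinite-dimensional piece of $B$, modelled on $2^{\N}$), which neither the finite-atomic lemma nor the no-finite-orbit results directly exclude. To eliminate it I would use the extra rigidity coming from left-commensuration together with the continuity of the $G$-action on the compact space $\mathcal{E}^\star(G)$: the uniformity of $X$ at infinity (the compactness of $\overline{KX\tu X}$ for all compact $K$) should prevent the boundaries of infinitely many disjoint translates $Xg_i$ from coexisting, forcing the would-be socle (the join of all atoms) to be a $G$-fixed element, hence $0$ or $1$ by Theorem~\ref{abels75}, and then $0$ by the finite-atomic dichotomy. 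In the LC setting I expect this last step to be the crux, and an alternative I would keep in reserve is to reduce to the known compactly generated case by writing $G$ as a directed union of compactly generated open subgroups and transferring indivisibility along this exhaustion.
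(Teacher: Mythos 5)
There is a genuine gap, and it sits exactly where you flagged your ``main obstacle''. Your setup (atoms of $\mathcal{P}_G^\star(G)$ correspond to isolated points; the translate dichotomy ``$[Xg]$ equal or disjoint to $[X]$'') is sound, but the finish does not go through in the locally compact setting. You invoke Theorems \ref{abels75} and \ref{nofior} to kill finite orbits of atoms, yet those theorems are stated for discrete groups, and the paper explicitly warns that their LC analogues are \emph{conjectural} for non-compactly-generated, non-discrete groups (they depend on a splitting theorem not yet available there), and moreover fail as stated for focal infinitely-ended LC-groups, which do have a fixed point in their space of ends. So the ``finite orbit'' branch is not forbidden by anything you may cite. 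The ``infinite orbit'' branch is only a hope (``the uniformity of $X$ at infinity should prevent\dots''); no argument is given, and this is precisely the hard part. Finally, your treatment of the elliptic case is phrased in discrete terms: for LC-groups you need Theorem \ref{HLC} (non-$\sigma$-compact regionally elliptic groups are $1$-ended, proved in this paper) to reduce to the $\sigma$-compact regionally elliptic case, where the LC analogue of Protasov's theorem applies. You also omit the preliminary reduction to totally disconnected groups (if $G_0$ is non-compact then $G$ has at most $2$ ends; otherwise $G_0$ acts trivially on the ends).

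The route the paper actually takes is the one you ``keep in reserve'': reduce to a compactly generated open subgroup. Concretely, it adapts the proof of Proposition \ref{isolaspecker}: assuming an atom exists, one produces a compactly generated open subgroup $L$ with at least $3$ ends meeting the relevant commensurated sets in non-compact sets, and then uses minimality of the $L$-action on $\mathcal{E}^\star(L)$ (available because $L$ is compactly generated) to derive a contradiction. The new difficulty specific to the LC setting --- which your proposal does not identify --- is that $L$ might be \emph{focal}, in which case its action on $\mathcal{E}^\star(L)$ is not minimal (it fixes an end). The paper handles this with Proposition \ref{locfocal} (every non-focal, locally focal TDLC-group is $1$-ended, via Lemmas \ref{focalopen} and \ref{lemhama}), which guarantees one can enlarge $L$ to a non-focal compactly generated open subgroup. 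Without an argument replacing these steps, your proposal does not constitute a proof.
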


One difficulty in proving this, is that in the context of locally compact groups there is an additional class of groups to consider: focal infinitely-ended LC-groups. Namely, consider a compact group $K$ and a continuous isomorphism $\theta$ of $K$ onto a proper (open) subgroup of finite index. The resulting HNN-extension is called a {\bf focal infinitely-ended LC-group}. Note that such an LC-group cannot be discrete. (Some more focal LC-groups are considered in \cite{CCMT}, which are 1-ended. In \cite{CCMT}, focal infinitely-ended groups are called focal groups of totally disconnected type.)

As observed by Abels \cite{Ab77}, the right action of a focal infinitely-ended LC-group $G$ on $\mathcal{E}^\star(G)$ has a fixed point. Actually, in this case, this fixed point $\omega$ is unique, and the action of $G$ on $\mathcal{E}^\star(G)\smallsetminus\{\omega\}$ is transitive \cite[Lemma 3.4]{CCMT}. Indeed, in this case, one can identify $\mathcal{E}^\star(G)$ to the boundary of the Bass-Serre tree of the HNN-extension $K\ast_\theta$, which is a regular tree of valency $1+[K:\theta(K)]$.



The Stallings splitting theorem (for finitely generated groups) has both been generalized to compactly generated locally compact groups by Abels \cite{Abe}, and to infinitely generated non-locally-finite groups by Dicks-Dunwoody, but the generalization to non-discrete locally compact groups that are not compactly generated is still conjectural. Therefore, the analogue of Theorem \ref{nofior} (where one should exclude the focal case) is also conjectural, and holds if the splitting theorem holds in full generality, since Abels' proof \cite{Ab77} that there is no finite orbit in the case with splittings, was written in the locally compact setting. The same applies to the analogue of Theorem \ref{cell_uncount} (where $\alpha$ is meant to be the cardinal of $G/K$ for some/any $\sigma$-compact open subgroup $K$).


Recall that a locally compact group is {\bf regionally elliptic} (or locally elliptic) if each compact subset is contained in some compact open subgroup.
Abels \cite[7.10]{Abe} asked whether there exists a non-$\sigma$-compact regionally elliptic, locally compact group with infinitely many ends. Scott and Sonneborn \cite{ScS} had solved negatively this for discrete (locally finite) abelian groups; Holt \cite{Hol} solved negatively the question for arbitrary discrete (locally finite) groups, and elaborating on Holt's proof, we answer Abels' question in the general case:


\begin{thm}[see \S\ref{s_holt}]\label{HLC}
Every non-$\sigma$-compact, regionally elliptic locally compact group is 1-ended.
\end{thm}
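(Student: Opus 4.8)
\emph{Overview and set-up.} The plan is to show that $G$ has at most one end, i.e.\ that every topologically left-commensurated subset $X\subseteq G$ is relatively compact or co-relatively compact; since a non-$\sigma$-compact group is non-compact and hence infinite (at least one end), this yields the theorem. Fix a compact open subgroup $U_0$ (it exists because $G$ is regionally elliptic and nontrivial), recall that the compact open subgroups of $G$ form a family directed by inclusion with union $G$, and that a subset is relatively compact if and only if it is contained in finitely many cosets of $U_0$. Assume for contradiction that $X$ is topologically left-commensurated with both $X$ and $X^c$ non-relatively-compact.

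\emph{Two preliminary reductions.} (i) Neither $X$ nor $X^c$ is contained in a $\sigma$-compact open subgroup: if $X\subseteq K$ with $K$ open and $\sigma$-compact, choose $g\notin K$ (possible since $G$ is non-$\sigma$-compact); then $gX\cap X\subseteq gK\cap K=\emptyset$, so $gX\tu X=gX\sqcup X$ is non-relatively-compact, contradicting commensuration. Consequently, for every $\sigma$-compact open subgroup $W$, both $X$ and $X^c$ meet uncountably many right cosets of $W$ (otherwise $X$, say, would lie in the $\sigma$-compact open subgroup generated by the countably many cosets it meets). (ii) \emph{Key Lemma.} For a compact open subgroup $U$ put $\partial_U X:=\{x\in G: Ux\cap X\neq\emptyset\text{ and }Ux\cap X^c\neq\emptyset\}$, the union of the left $U$-orbits (right cosets $Ux$) that $X$ \emph{cuts}. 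Then $\bigcup_{g\in U}(gX\tu X)=\partial_U X$ (a direct computation), and $\partial_U X$ is relatively compact because $\partial_U X\subseteq(UX\tu X)\cup(UX^c\tu X^c)$, both terms being the symmetric differences controlling the single compact set $U$, hence relatively compact.

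\emph{Strategy for the contradiction (Holt's).} I would construct an increasing sequence $U_0\subseteq U_1\subseteq\cdots$ of compact open subgroups whose union $W:=\bigcup_n U_n$ is an open $\sigma$-compact subgroup such that \emph{every} right coset $Wx$ is cut by $X$. Granting this, a coset $Wx=\bigcup_n U_n x$ is cut if and only if some $U_n x$ is cut, so $\{x:Wx\text{ is cut}\}=\bigcup_n\partial_{U_n}X$; if every coset is cut this is all of $G$, whence by the Key Lemma $G=\bigcup_n\partial_{U_n}X$ is a countable union of relatively compact sets, i.e.\ $\sigma$-compact --- contradicting the hypothesis. Thus the entire proof reduces to producing $W$.

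\emph{The main obstacle.} Constructing $W$ is the hard part, and is exactly where one must \emph{elaborate on Holt's proof}. The difficulty is that a single enlargement $U_n\subseteq U_{n+1}$ can be arranged to cut any prescribed coset (given an $X$-monochromatic coset $U_n x\subseteq X$ and $p\in X^c$, the element $t=px^{-1}$ lands $tx=p\in X^c$ in the larger coset), but there may be uncountably many monochromatic cosets to destroy, so they cannot be handled one at a time. The plan is to choose at each stage an element $t$ that cuts \emph{all} currently $X$-monochromatic $U_n$-cosets simultaneously (and symmetrically for $X^c$), exploiting reduction (i), which forces $X$ and $X^c$ to be so spread out over the cosets of every $\sigma$-compact open subgroup that such a uniformly-cutting $t$ exists, and then to interleave a bookkeeping that prevents any monochromatic coset from surviving into the limit $W$. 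The new feature beyond the discrete locally finite case is that the left $U$-orbits are the right cosets $Ux$, which are merely compact rather than finite, and that left translation conjugates $U$ to $gUg^{-1}$; this is controlled because any two compact open subgroups are commensurable, keeping $gX\tu X$ relatively compact and the orbit combinatorics finite-to-finite. I expect this simultaneous-cutting construction, and the verification that it terminates in a $\sigma$-compact $W$, to be the crux; the reductions and the Key Lemma are routine bookkeeping by comparison.
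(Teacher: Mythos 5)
Your preliminary reductions and your Key Lemma are correct (and the Key Lemma is a clean way to package commensuration), but they refute your own strategy rather than support it. The Key Lemma says that for every compact open subgroup $U$ the set $\partial_U X$ of cut $U$-cosets is relatively compact; hence for every $\sigma$-compact open subgroup $W=\bigcup_n U_n$ the set of cut $W$-cosets, namely $\bigcup_n\partial_{U_n}X$, is $\sigma$-compact. Since $G$ is not $\sigma$-compact, no such $W$ can have all of its cosets cut --- for \emph{any} commensurated $X$ whatsoever. The object you propose to construct therefore does not exist, and the ``contradiction'' you would reach at the end is a contradiction with the standing hypothesis that $G$ is non-$\sigma$-compact (equivalently, with your own Key Lemma), not with the assumption that $X$ and $X^c$ are both non-relatively-compact; so nothing about $X$ would be proved. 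The failure already occurs at the very first step of your simultaneous-cutting plan: if some enlargement $U_{n+1}\supseteq U_n$ cut every currently monochromatic $U_n$-coset, then every $U_{n+1}$-coset would be cut (each one contains some $U_n$-coset, which is either already cut or monochromatic and hence newly cut), so $\partial_{U_{n+1}}X=G$ would be relatively compact and $G$ compact. Thus no uniformly cutting element $t$ exists, and reduction (i) cannot rescue this.

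The actual content of the theorem lies in the opposite direction, and this is what the paper's proof (elaborating on Holt) does. Setting $f=\mathbf{1}_X$, one builds a non-compact, $\sigma$-compact open subgroup $H$ saturated under the supports of the maps $g\mapsto f(hg)-f(g)$, so that $f$ is \emph{constant} on every coset $Hg$ with $g\notin H$ (most cosets are monochromatic, exactly as your Key Lemma predicts); the real problem is to show that all these constants agree outside a compact set. This is done with a purely group-theoretic lemma of Holt (Lemma \ref{holtlemma}): if $H$ and $L$ generate $G'$, every element of $G'$ has a positive power in $H\cap L$, and $f$ is constant on the nontrivial right cosets of each of $H$ and $L$, then $f$ is constant on $G'\smallsetminus(H\cap L)$. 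One applies this with $L$ built from compact open subgroups $M$ not contained in $H$ (regional ellipticity supplies both the compact open subgroups and the power condition), using Lemma \ref{actfg2} to first arrange constancy on the relevant $L$-cosets, and then lets $M$ vary to cover all of $G\smallsetminus H$. None of this machinery appears in your proposal, and your route cannot be completed without abandoning the ``make every coset cut'' idea.
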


Let us also mention that the analogue of Protasov's theorem to the locally compact setting ($\sigma$-compact regionally elliptic groups) holds, and actually the proof follows from Protasov's results, so we omit the proof.







\subsection{Space of ends as a metric space}



As we have mentioned, the topological classification of spaces of ends of finitely generated groups is completely settled. However, there is a finer metric structure on the space of ends, which is worth pointing out.

Namely, in a group $G$ endowed with a word length $|\cdot|$ with respect to a finite generating subset, call the radius of a nonempty finite subset $F$ the supremum $\max_{x\in F}|x|$. Say that ends $\omega,\omega'$ are separated by $F$ if they are ends of distinct components of $G\smallsetminus F$. Define $D(\omega,\omega')$ as the infimum of the radii of finite subsets separating $\omega$ and $\omega'$ (0 if $\omega=\omega'$). Finally define $d(\omega,\omega')=\exp(-D(\omega,\omega'))$. This is a metric on the space of ends, defining its topology. For two choices of word lengths, the identity map of the space of ends is a bi-H\"older homeomorphism between the resulting metric spaces. A natural question is then

\begin{que} Do there exist two finitely generated infinitely-ended groups for which the space of ends are not bi-H\"older equivalent?
\end{que}


\section{Preliminaries}

We need the following well-known fact (see \cite[Proposition 4.B.2]{CorFW} for a more general result).
\begin{lem}[Lemma 2.3 in \cite{Coh}]\label{actfg}
Let $G$ be a group, $H$ a finitely generated subgroup, and $X$ a left $H$-commensurated subset of $G$. Then there exists a finite subset $F$ of $G$ such that for every right coset $Hg$ disjoint from $F$, we have $X\cap Hg\in\{\emptyset,Hg\}$.
\end{lem}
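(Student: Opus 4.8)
The plan is to exhibit a single finite set $F$, built directly from the commensuration defect of $X$, that controls all cosets simultaneously, by regarding each right coset as a copy of the Cayley graph of $H$ and propagating membership in $X$ along its edges. Concretely, I would fix a finite symmetric generating set $S$ of $H$ (available since $H$ is finitely generated) and set
\[ F=\bigcup_{s\in S}(sX\tu X). \]
This $F$ is finite: it is a union over the finite set $S$ of the sets $sX\tu X$, each of which is finite precisely because $X$ is left $H$-commensurated.

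The next step is to record the graph structure of a right coset $Hg$. Left multiplication acts freely and transitively on $Hg$, so $h\mapsto hg$ is a bijection $H\to Hg$ carrying the left Cayley graph of $(H,S)$, with edges $h\sim sh$, to the graph on $Hg$ with edges $hg\sim shg$ for $s\in S$. Since $S$ is a symmetric generating set, this coset graph is connected. Writing $\chi\colon G\to\{0,1\}$ for the indicator function of $X$, the goal becomes to show that $\chi$ is constant on $Hg$ whenever $Hg$ is disjoint from $F$.

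The key computation is that any disagreement of $\chi$ across an edge of the coset graph lands inside $F$. For $h\in H$ and $s\in S$ one has $hg\in X\iff shg\in sX$, so
\[ \chi(hg)\neq\chi(shg)\iff \big(shg\in sX\big)\neq\big(shg\in X\big)\iff shg\in sX\tu X\subseteq F. \]
Consequently, if $Hg\cap F=\emptyset$, then for every $s\in S$ the vertex $shg$ lies outside $F$, so no edge of the coset graph is a disagreement edge; by connectivity $\chi$ is constant on $Hg$, that is, $X\cap Hg\in\{\emptyset,Hg\}$, which is exactly the assertion.

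I do not expect a serious obstacle. The finite-generation hypothesis enters in exactly two places: it keeps $F$ finite (finitely many generators $s$) and it guarantees connectivity of the coset graph, so that equality of $\chi$ across single edges forces global constancy on the coset. The one point to state with care is the edge-disagreement identity above, since this is where left $H$-commensuration is used in its essential form: it is precisely what confines all disagreements of $\chi$ to the finite set $F$.
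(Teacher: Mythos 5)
Your proof is correct and is essentially the paper's argument: the paper also fixes a finite generating set $S$, notes that $sX\bigtriangleup X$ decomposes as the disjoint union over right cosets of $s(X\cap Hg)\bigtriangleup (X\cap Hg)$, and concludes that outside finitely many cosets $X\cap Hg$ is $H$-invariant, hence empty or all of $Hg$ by transitivity. Your Cayley-graph connectivity phrasing and the explicit choice $F=\bigcup_{s\in S}(sX\bigtriangleup X)$ are just a concrete repackaging of the same idea.
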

\begin{proof}Since we generalize the result below, we include the easy proof.
Let $S$ be a finite generating subset of $H$. For $s\in S$ and $Hg\in H\backslash G$ (quotient of $G$ by the left action of $H$), write $X_g=X\cap Hg$. Then we have $sX\tu X=\bigsqcup_{g\in H\backslash G}sX_g\tu X_g$. Since this is finite and $S$ is also finite, we deduce that for all but finitely many right cosets $Hg$, we have $sX_g\tu X_g=\emptyset$ for all $s\in S$, which means that $X_g$ is left $H$-invariant.
\end{proof}

Here is the locally compact version of this lemma; the proof being an immediate adaptation of the above one:
\begin{lem}\label{actfg2}
Let $G$ be a locally compact group, $H$ an open, compactly generated subgroup, and $X$ a topologically left $H$-commensurated subset of $G$. Then there exists a finite subset $F$ of $G$ such that for every right coset $Hg$ disjoint from $F$, we have $X\cap Hg\in\{\emptyset,Hg\}$.\qed
\end{lem}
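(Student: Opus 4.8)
The plan is to mimic the proof of Lemma \ref{actfg} verbatim, replacing the algebraic finiteness condition by the topological one (compact closure) and the finite generating set $S$ of $H$ by a compact generating set. So let $S$ be a compact subset generating the open subgroup $H$; since $H$ is open it is closed, and we may also take $S$ to contain a neighborhood of the identity in $H$. For $s\in S$ and a right coset $Hg$ in $H\backslash G$, write $X_g=X\cap Hg$; note that since $S\subseteq H$, multiplication by $s$ preserves each coset $Hg$, so the symmetric difference $sX\,\tu\,X$ decomposes as the disjoint union $\bigsqcup_{Hg\in H\backslash G}(sX_g\,\tu\,X_g)$, exactly as in the discrete case.

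The key point that replaces ``this is finite'' is that $SX\smallsetminus S$ has compact closure by the definition of topologically left $H$-commensurated (applied with the compact set $K=S$). First I would argue that the symmetric differences $sX\,\tu\,X$, as $s$ ranges over $S$, are all contained in a single set of compact closure. Indeed $sX\smallsetminus X\subseteq SX$, and since $S$ is compact and $H$ is open, the part of $SX$ lying outside a fixed compact neighborhood of the identity is controlled; more precisely $SX\smallsetminus S$ has compact closure, and $X\smallsetminus sX\subseteq s^{-1}(sX\,\tu\,X)$ is handled symmetrically using that $S^{-1}$ is also compact (so one may enlarge $S$ to be symmetric from the start). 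The upshot is that $\bigcup_{s\in S}(sX\,\tu\,X)$ has compact closure, hence is contained in some compact set $C$, which being compact meets only finitely many of the \emph{open} cosets $Hg$ (the cosets $Hg$ form a partition of $G$ into open sets, and a compact set meets only finitely many members of an open partition).

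Consequently there is a finite subset $\{Hg_1,\dots,Hg_n\}$ of cosets such that for every coset $Hg$ outside this list, $(sX_g\,\tu\,X_g)\cap C=\emptyset$ for all $s\in S$; but $sX_g\,\tu\,X_g$ is already contained in $C$, so it must be empty. Thus $sX_g=X_g$ for all $s\in S$, and since $S$ generates $H$ this means $X_g$ is left $H$-invariant inside the coset $Hg$, forcing $X_g=X\cap Hg\in\{\emptyset,Hg\}$. Finally I would take $F$ to be a finite set meeting exactly the finitely many exceptional cosets $Hg_1,\dots,Hg_n$ (one point per coset), so that any coset $Hg$ disjoint from $F$ is among the non-exceptional ones and therefore satisfies $X\cap Hg\in\{\emptyset,Hg\}$.

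The step I expect to require the most care is the compactness bookkeeping: verifying cleanly that $\bigcup_{s\in S}(sX\,\tu\,X)$ has compact closure from the single hypothesis that $SX\smallsetminus S$ is relatively compact, and in particular handling the ``$X\smallsetminus sX$'' half of each symmetric difference. This is where openness of $H$ and symmetry of $S$ (after enlarging) are used, together with the fact that a compact set meets only finitely many cells of the open coset partition $H\backslash G$ — the topological substitute for the purely combinatorial counting in Lemma \ref{actfg}. Everything else is a direct transcription, which is exactly why the paper can assert the proof is ``an immediate adaptation.''
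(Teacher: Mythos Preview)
Your approach is precisely the intended adaptation, and the paper omits the proof for exactly this reason. The muddle in your compactness bookkeeping stems from a typo in the paper's definition: ``$KX\smallsetminus K$ has compact closure'' should read ``$KX\smallsetminus X$ has compact closure'' (with the typo'd version, even $X=G$ would fail to be commensurated). Once corrected, and with $S$ taken symmetric, the step is clean: for $s\in S$ one has $sX\smallsetminus X\subseteq SX\smallsetminus X$ and $X\smallsetminus sX=s(s^{-1}X\smallsetminus X)\subseteq S\cdot\overline{SX\smallsetminus X}$, so $\bigcup_{s\in S}(sX\,\tu\,X)$ lies in the compact set $\overline{SX\smallsetminus X}\cup S\cdot\overline{SX\smallsetminus X}$; your remaining steps (a compact set meets only finitely many open cosets; on the others $X_g$ is $S$-invariant hence $H$-invariant) are then exactly right.
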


The notion of {\bf $G$-commensurated subset} makes sense for an arbitrary $G$-set $X$. They form a Boolean subalgebra $\mathcal{P}_{(G)}(X)$ of $\mathcal{P}(X)$, and its quotient by the ideal of finite subsets of $X$ is denoted by $\mathcal{P}^\star_G(X)$. The corresponding Stone spaces are called the {\bf end compactification} and the {\bf space of ends} $\mathcal{E}^\star(X)$ of the $G$-set $X$. Ends of groups are the particular case of simply transitive actions. Also, the end compactification of a trivial action is just the Stone-\v Cech compactification of the given set.

\begin{rem}\label{end_binary}
Let $G$ be a group and $X$ a $G$-set. Embed $X$ into the compact space $\{0,1\}^{\mathcal{P}_{(G)}(X)}$ by mapping $x$ to $\iota(x)=(\mathbf{1}_M(x))_{M\in\mathcal{P}_{(G)}(X)}$. Then the end compactification of the $G$-set $X$ can be identified to the embedding of $X$ into the closure of $\iota(X)$. This point of view is the one used by Protasov in \cite{Pro03}. Then $\iota(X)$ is open in its closure, and the complement identifies to the space of ends of the $G$-set $X$.
\end{rem}

\section{The alternative metrizably-ended / untamely-ended}\label{tamalter}
\begin{thm}\label{setcoume}
Let $G$ be a countable group and $X$ a $G$-set. The following are equivalent:
\begin{enumerate}
\item\label{it_mt1} $X$ is not metrizably-ended;
\item\label{it_mt2a} the space of ends of $X$ has a closed subset homeomorphic to the Stone-\v Cech compactification of $\N$;
\item\label{it_mt2} the space of ends of $X$ has a continuous surjective map onto the Stone-\v Cech compactification of $\N$;
\item\label{it_mt3} the end compactification of $X$ has a continuous surjective map onto the Stone-\v Cech compactification of $\N$;
\item\label{it_mt4} $X$ is not tamely-ended.
\end{enumerate}
\end{thm}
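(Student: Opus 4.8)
The plan is to install a Stone-duality dictionary and then close a short cycle of implications, with essentially all the real work concentrated in $(1)\Rightarrow(5)$. Writing $q\colon\mathcal P_{(G)}(X)\to\mathcal P^\star_G(X)$ for the quotient by finite sets, condition \eqref{it_mt3} dualizes to a unital injection $\mathcal P(\N)\hookrightarrow\mathcal P_{(G)}(X)$, condition \eqref{it_mt2} to a unital injection $\mathcal P(\N)\hookrightarrow\mathcal P^\star_G(X)$, and condition \eqref{it_mt2a} to a surjection $\mathcal P^\star_G(X)\twoheadrightarrow\mathcal P(\N)$ (a closed subspace corresponds to a quotient algebra). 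I would first dispatch the easy implications. For $\eqref{it_mt4}\Rightarrow\eqref{it_mt3}$, if $f\colon X\to\N$ is surjective and left $G$-almost invariant then $S\mapsto f^{-1}(S)$ is a unital injection $\mathcal P(\N)\hookrightarrow\mathcal P_{(G)}(X)$, since $gf^{-1}(S)\tu f^{-1}(S)\subseteq\{x:f(g^{-1}x)\neq f(x)\}$ is finite. For $\eqref{it_mt4}\Rightarrow\eqref{it_mt2}$, after regrouping the fibers of $f$ into infinitely many infinite blocks (preserving surjectivity and almost-invariance) all nonempty preimages become infinite, so $q\circ f^{-1}$ is still injective. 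For $\eqref{it_mt2}\Rightarrow\eqref{it_mt2a}$, $\mathcal P(\N)$ is complete, hence injective in Boolean algebras (Sikorski), so a monomorphism $\mathcal P(\N)\hookrightarrow\mathcal P^\star_G(X)$ splits by a retraction, which is a surjection onto $\mathcal P(\N)$. Finally $\eqref{it_mt2a}\Rightarrow\eqref{it_mt1}$ and $\eqref{it_mt2}\Rightarrow\eqref{it_mt1}$ are immediate, as $\beta\N$ is non-metrizable while metrizability passes to closed subspaces and to continuous images of compacta. I would deliberately \emph{not} attempt $\eqref{it_mt2a}\Rightarrow\eqref{it_mt2}$ or $\eqref{it_mt2}\Rightarrow\eqref{it_mt3}$ directly, as both require \emph{projectivity} of $\mathcal P(\N)$, which fails; they are unnecessary for the cycle below.

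For $\eqref{it_mt3}\Rightarrow\eqref{it_mt1}$, take a unital injection $\iota\colon\mathcal P(\N)\hookrightarrow\mathcal P_{(G)}(X)$ and set $A_n=\iota(\{n\})$; these are pairwise disjoint, nonempty, and $\iota(B)\supseteq\bigcup_{n\in B}A_n$ for all $B$, with equality for finite $B$. Splitting on $N_\infty=\{n:A_n\text{ infinite}\}$: if $N_\infty$ is infinite then $q\circ\iota$ is injective on $\mathcal P(N_\infty)\cong\mathcal P(\N)$; if $N_\infty$ is finite, then on the cofinite set $\N\smallsetminus N_\infty$ one has $\iota(B)$ finite iff $B$ finite, so $q\circ\iota$ induces an injection $\mathcal P^\star(\N)\hookrightarrow\mathcal P^\star_G(X)$. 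Either way $\mathcal P^\star_G(X)$ is uncountable, i.e.\ $X$ is not metrizably-ended. Together with the first paragraph this gives the cycle $\eqref{it_mt1}\Rightarrow\eqref{it_mt4}\Rightarrow\eqref{it_mt2}\Rightarrow\eqref{it_mt2a}\Rightarrow\eqref{it_mt1}$, while $\eqref{it_mt4}\Rightarrow\eqref{it_mt3}\Rightarrow\eqref{it_mt1}\Rightarrow\eqref{it_mt4}$ folds in \eqref{it_mt3}; so all five are equivalent once $\eqref{it_mt1}\Rightarrow\eqref{it_mt4}$ is proved.

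The heart is $\eqref{it_mt1}\Rightarrow\eqref{it_mt4}$. I would fix an increasing exhaustion $G=\bigcup_n H_n$ by finitely generated subgroups and let $c_m$ be the (possibly infinite) number of $H_m$-orbits on $X$. Since $H_m\subseteq H_{m+1}$ coarsens the orbit partition, $c_m$ is non-increasing, so either $c_m=\infty$ for all $m$, or $c_m$ stabilizes to a finite value for $m\ge M$. In the first case I build greedily pairwise disjoint $H_k$-orbits $o_k$: each $o_j$ with $j<k$ lies in a single $H_k$-orbit, so at most $k-1$ of the infinitely many $H_k$-orbits are blocked and I choose $o_k$ among the rest. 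Putting $f=k$ on $o_k$ and $f=0$ off $\bigcup_k o_k$ yields a map with infinite image that is left $G$-almost invariant: for $g\in H_m$, every $o_k$ with $k\ge m$ is $H_m$-invariant (a union of $H_k$-orbits is $H_m$-invariant once $m\le k$), so only the finitely many pieces with $k<m$ and the complement contribute, each with finite defect since every $o_k$ is $G$-commensurated; this is untameness. In the stabilizing case, for $m\ge M$ the $H_m$-orbit partition is constant and coincides with the partition into $G$-orbits, so $\mathcal P^\star_G(X)\subseteq\mathcal P^\star_{H_M}(X)$; but $X$ is then a finite union of orbits of the finitely generated group $H_M$, whose space of ends is that of a locally finite Schreier graph and hence metrizable (Remark \ref{fg_met}), so $\mathcal P^\star_{H_M}(X)$, and a fortiori $\mathcal P^\star_G(X)$, is countable, contradicting \eqref{it_mt1}. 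Lemma \ref{actfg} is the tool justifying the orbit-description of commensurated sets used throughout.

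The main obstacle is exactly this dichotomy: converting the qualitative hypothesis ``$\mathcal P^\star_G(X)$ is uncountable'' into the combinatorial statement ``the orbit counts $c_m$ never become finite,'' and then extracting an honest almost-invariant function. The monotonicity of $c_m$ is what makes the split clean; the two delicate points are checking that a union of high-level orbits is invariant under the low-level groups, which is what makes the greedily built $f$ almost invariant, and handling the stabilizing case, where the finitely generated metrizability input (ends of a locally finite graph are second countable) is essential and cannot be supplied by Lemma \ref{actfg} alone, since that lemma carries no information when only finitely many orbits remain.
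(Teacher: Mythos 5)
Your handling of the easy implications is correct, and in two places takes a legitimately different route from the paper: you get $(\ref{it_mt2})\Rightarrow(\ref{it_mt2a})$ from Sikorski's theorem (completeness, hence injectivity, of $\mathcal P(\N)$ gives a retraction of the dual monomorphism), where the paper instead lifts the points of $\N$ and extends to a continuous section $\beta\N\to\mathcal E^\star(X)$; and you prove $(\ref{it_mt3})\Rightarrow(\ref{it_mt1})$ directly via the atoms $A_n=\iota(\{n\})$, where the paper passes through $(\ref{it_mt2})$. Both variants are fine and the cycle of implications closes. The problem is the core implication $(\ref{it_mt1})\Rightarrow(\ref{it_mt4})$, where your argument has a genuine gap.

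The fatal step is Case 1 of your dichotomy. An $H_k$-orbit $o_k$ is in general \emph{not} a $G$-commensurated subset of $X$, so the function $f$ you build is not almost invariant: for $g\in H_m\smallsetminus H_k$ with $k<m$, the translate $go_k$ can be entirely disjoint from $o_k$, and if $o_k$ is infinite the defect set $\{x:f(gx)\neq f(x)\}$ contains most of $o_k$ and is infinite. Concretely, for $G=X=\Q$ with $H_k=\tfrac1{k!}\Z$, the $H_k$-orbits are cosets of $\tfrac1{k!}\Z$ and translation by $\tfrac12$ moves $o_1=\Z$ completely off itself. Worse, the dichotomy is on the wrong invariant: $\Q$, and likewise $\Q\ast\Z$ (which is metrizably-ended by Proposition \ref{coume}), has infinitely many $H_m$-orbits for every finitely generated $H_m$, so falls into your Case~1 even though it is tamely-ended; your construction would ``prove'' it untamely-ended. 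Your Case~2 hypothesis (finitely many $H_M$-orbits) is correspondingly far stronger than the true complementary condition, which is the one recorded in Theorem \ref{caract_metri}: finitely many $G$-orbits together with a finitely generated $H$ for which every $H$-\emph{invariant} $G$-commensurated subset is $G$-invariant. The paper's proof works with this: after splitting off the case of infinitely many $G$-orbits and reducing to a transitive action, it uses the uncountability of the group $M_G(X,\Z)$ of almost invariant functions to see that its subgroup of $G_n$-invariant elements — the kernel of a homomorphism into the countable group $K(X,\Z)^{F_n}$ — is uncountable, hence contains a non-constant $f_n$, normalized to take values $\{0,n\}$ and vanish at the basepoint; the locally finite sum $f=\sum_nf_n$ is then almost invariant with infinite image. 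The essential point your argument misses is that one needs sets/functions that are simultaneously $G_n$-invariant \emph{and} almost invariant under all of $G$ — which is exactly where hypothesis $(\ref{it_mt1})$ enters, via a counting argument — rather than raw $G_n$-orbits, which carry no almost-invariance under the rest of the group.
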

\begin{proof}
We start with the easy $\Leftarrow$ implications: if $X$ is not tamely ended (\ref{it_mt4}), consider a surjective almost $G$-invariant map $X\to\N$. Then inverse image map is an injective Boolean algebra homomorphism from $2^\N$ into the Boolean algebra of $G$-commensurated subsets of $X$, which by Stone duality yields a map in (\ref{it_mt3}). 

If there is such a map on the end compactification (\ref{it_mt3}), then by Stone duality we deduce an injective map from the Boolean algebra $2^\N$ into the Boolean algebra of $G$-commensurated subsets of $X$. Considering a partition of $\N$ into infinitely many infinite subsets, we can suppose that its image intersects the ideal of finite subsets is reduced to $\{0\}$, and hence the the Boolean algebra of commensurated subsets modulo finite subsets contains a copy of $2^\N$, which yields by Stone duality a map as in (\ref{it_mt2}).

Suppose that (\ref{it_mt2}) holds (for an arbitrary Stone space $E$): $E$ has a continuous surjective map $f$ to the Stone-\v Cech compactification $\beta\N$. Lifting each element of $\N$, we obtain a map $\N\to E$, which thus extends to a unique continuous map $u:\beta\N\to E$, such that $f\circ u$ is the identity on $\N$. By density, $f\circ u$ is the identity.

If (\ref{it_mt2a}) holds, then the Boolean algebra of commensurated subsets of $X$ is uncountable, that is, (\ref{it_mt1}) holds.

It remains to prove the less trivial implication, namely that (\ref{it_mt1}) implies (\ref{it_mt4}). Suppose that $X$ is not metrizably-ended. If $X$ has infinitely many $G$-orbits, then $X$ is clearly not tamely-ended: indeed if $Y=G\backslash X$ is the set of orbits, then the projection map $X\to Y$ is $G$-invariant.

Hence we can suppose that $X$ has finitely many orbits, and then we immediately reduce to the case when $G$ acts transitively on $X$, namely $X=G/H$, after choice of a base-point $o$ in $X$.

Write $G$ as an ascending union of finite subsets: $G=\bigcup F_n$ with $F_n\subset F_{n+1}$ for every $n$, and let $G_n$ be the subgroup generated by $F_n$. For any given discrete abelian group $A$, denote by $M_G(X,A)$ the set of almost $G$-invariant functions $X\to A$. (The group structure of $A$ does not affect its definition, but simply makes $M_G(X,A)$ an abelian group.) For a subgroup $L$ of $G$, denote by $M_G^L(X,A)$ its subgroup of $L$-invariant functions. Then $M_G^{G_n}(X,A)$ is the kernel of the homomorphism from $M_G(X,A)$ to $K(X,A)^{F_n}$ mapping $f$ to $(f-g\cdot f)_{g\in F_n}$, where $K(X,A)$ is the group of finitely supported functions $X\to A$.

We now suppose that $A=\Z$. Then $K(X,A)^{F_n}$ is countable, and since $X$ is not metrizably-ended, $M_G(X,A)$ is uncountable. Hence the kernel $M_G^{G_n}(X,A)$ is uncountable; in particular it is not reduced to constant maps. Post-composing by a self-map of $\Z$, there exists an element of $f_n$ of $M_G^{G_n}(X,A)$ such that $f_n(o)=0$ and $f_n(X)=\{0,n\}$. Write $X_n=G_no=G_n/(H\cap G_n)\subset X$, so $X$ is the ascending union of the subsets $X_n$. Then $f_k$ is $0$ on $X_n$ for all $k\ge n$; in particular, the infinite sum $f=\sum_n f_n$ is a well-defined function on $X$. Since $f_n\ge 0$ and $f_n$ takes the value $n$, the function $f$ is unbounded, and in particular $f$ has an infinite image. 

Finally, we check that $f$ is almost $G$-invariant. Fix $g\in G$. So $g\in G_n$ for some $n$. Let $F$ be the set of $x\in X$ such that for some $k<n$ we have $f_k(gx)\neq f_k(x)$. Then $F$ is finite. For $k\ge n$, $f_k$ is $G_n$-invariant. Hence $f(gx)\neq f(x)$ for all $x\notin F$, so $f$ is almost $G$-invariant. Thus $X$ is not tamely-ended.
\end{proof}

The proof actually shows the following:

\begin{thm}\label{caract_metri}
Let $G$ be a countable group.
Let $X$ be a $G$-set. Then $X$ is metrizably-ended if and only if $X$ has finitely many $G$-orbits and there exists a finitely generated subgroup $H$ of $G$ such that the only $H$-invariant, $G$-commens\-urated subsets of $X$ are $G$-invariant (which are finitely many).\qed
\end{thm}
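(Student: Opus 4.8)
The plan is to reuse the machinery from the proof of Theorem \ref{setcoume}. Throughout write $B=\mathcal{P}_{(G)}(X)$ for the Boolean algebra of $G$-commensurated subsets, and for a subgroup $L\le G$ let $B^L\subseteq B$ be the subalgebra of $L$-invariant elements; thus the ``$H$-invariant, $G$-commensurated subsets'' of the statement are exactly the elements of $B^H$, while the $G$-invariant ones form $B^G$, which is the (finite, of size $2^r$) set of unions of the $r$ orbits once we know $X$ has finitely many orbits $O_1,\dots,O_r$. Note $B\cong\prod_i\mathcal{P}_{(G)}(O_i)$ canonically (a subset is commensurated iff its trace on each orbit is, since $G$ preserves each $O_i$ and there are finitely many of them), and this identification carries $B^L$ to $\prod_i\mathcal{P}_{(G)}(O_i)^L$; hence, via an ascending exhaustion $G=\bigcup_nG_n$ by finitely generated subgroups, the existence of a finitely generated $H$ with $B^H=B^G$ is equivalent to the stabilization ``$\exists n:\ B^{G_n}=B^G$''. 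I will prove the two implications separately.

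For the converse direction, assume $X$ has finitely many orbits and fix a finitely generated $H$ with $B^H=B^G$, with finite generating set $F$. Following the proof of Theorem \ref{setcoume} with $A=\Z$, consider the homomorphism $\Phi\colon M_G(X,\Z)\to K(X,\Z)^F$, $f\mapsto(f-s\cdot f)_{s\in F}$, whose kernel is $M_G^H(X,\Z)$. The target $K(X,\Z)^F$ is countable ($X$ is countable and $F$ is finite), so $\mathrm{im}\,\Phi$ is countable. The key point is that the kernel is also countable: if $f$ is $H$-invariant and almost $G$-invariant, then each sublevel set $\{f\le k\}$ is $H$-invariant and $G$-commensurated, hence lies in $B^H=B^G$, i.e.\ is a union of orbits; as this holds for every $k$, the function $f$ is constant on each orbit, so $M_G^H(X,\Z)\cong\Z^r$ is countable. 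Therefore $M_G(X,\Z)$ is countable; since the indicator functions of commensurated subsets embed $B$ into $M_G(X,\Z)$, the algebra $B$, and a fortiori $\mathcal{P}^\star_G(X)$, is countable, i.e.\ $X$ is metrizably-ended.

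For the forward direction I argue by contraposition, showing that if $X$ is not of the asserted form then it is not tamely-ended, whence not metrizably-ended by Theorem \ref{setcoume}. If $X$ has infinitely many orbits, the projection $X\to G\backslash X$ is $G$-invariant and surjects onto a countably infinite set, so $X$ is not tamely-ended. Otherwise $X$ has finitely many orbits but $B^{G_n}\neq B^G$ for all $n$; by the product decomposition and the fact that the chains $(B^{G_n}(O_i))_n$ are decreasing with $r$ finite, some single orbit $O=G/H_0$ satisfies $B^{G_n}(O)\supsetneq\{\emptyset,O\}$ for all $n$. I then run the summation argument from the proof of Theorem \ref{setcoume}: for each $n$ pick a nonempty proper $G_n$-invariant commensurated $M_n\subseteq O$ (replacing it by its complement so that the basepoint $o$ avoids it), set $f_n=n\,\mathbf 1_{M_n}$, and form $f=\sum_nf_n$; since $f_k$ vanishes on $X_n=G_no$ for $k\ge n$ the sum is well-defined, it is unbounded because $f_n\ge0$ attains $n$, and it is almost $G$-invariant. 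Extending $f$ by $0$ on the other orbits gives an almost $G$-invariant function on $X$ with infinite image, so $X$ is not tamely-ended.

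The only genuinely new ingredient compared with Theorem \ref{setcoume} is the countability of the kernel in the converse, and its proof is short once one notes that the binary hypothesis $B^H=B^G$ controls all sublevel sets simultaneously. The main thing to get right is the bookkeeping of the reduction to a single orbit in the forward direction: one must use finiteness of the number of orbits to pass from ``$B^{G_n}\neq B^G$ for all $n$'' to a single orbit failing stabilization for all $n$, since an individual orbit might stabilize only at a larger index. Beyond this I expect no serious obstacle: the summation construction is exactly the one already carried out, now fed with the weaker input that $M_G^{G_n}(X,\Z)$ is merely non-constant (equivalently $B^{G_n}\neq B^G$) rather than uncountable.
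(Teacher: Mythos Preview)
Your proof is correct and follows essentially the same approach the paper has in mind: the paper simply asserts that the proof of Theorem \ref{setcoume} already yields this statement, and you have faithfully unpacked that claim, reusing the kernel/image decomposition $M_G(X,\Z)\to K(X,\Z)^F$ for the converse and the summation construction for the forward (contrapositive) direction. The only additions you make---the sublevel-set argument showing $M_G^H(X,\Z)\cong\Z^r$ when $B^H=B^G$, and the pigeonhole reduction to a single orbit---are exactly the small bookkeeping steps the paper leaves implicit.
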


\begin{rem}\label{fg_met}
As a particular case of Theorem \ref{caract_metri}, we obtain the classical fact that if $G$ is finitely generated (say by some finite subset $S$), then its space of ends is metrizable. Here the proof simplifies as follows. Let $M(G)$ be the set of almost $G$-invariant functions $G\to\Z/2\Z$, and $K(G)$ those finitely supported ones. Map $M(G)$ to $K(G)^S$ by $f\mapsto \partial f=(f-s\cdot f)_{s\in S}$. Then the kernel of $\partial$ is reduced to the two constant functions. Since $K(G)^S$ is countable, it follows that $M(G)$ is countable. This means that the end compactification is metrizable.
\end{rem}

\begin{rem}
Let $X$ be the product of uncountably many finite sets, each of cardinal $\ge 2$). Then $X$ is not metrizable, but has no continuous surjective map onto the Stone-\v  Cech compactification of $\N$.
\end{rem}

\begin{rem}
The space of ends can be defined in the context of coarse spaces. One instance is the coarse structure on a set $X$ induced by a group action. Protasov \cite{Pro08} proved that if a coarse structure has bounded geometry, then it is coarsely equivalent to the coarse structure induced by some group action. If in addition the underlying set is countable, the acting group can be chosen to be countable as well (this is a consequence of the proof). Therefore, since coarse equivalences between coarse spaces induce homeomorphisms between space of ends, it follows (combining Protasov's result with Theorem \ref{setcoume}) that the space of ends $E$ of any coarse structure with bounded geometry satisfies the first equivalence of Theorem \ref{setcoume}: $E$ is not metrizable $\Leftrightarrow$ $E$ has a continuous surjective map onto the Stone-\v Cech compactification of $\N$.

We refer to Protasov's paper for the relevant notions of coarse geometry.
\end{rem}


\section{The alternative locally finite / separable}\label{s_cell}


\begin{lem}\label{infcysep}
Let $G$ be a group, and $H$ a finitely generated subgroup. Let $X$ be an infinite $G$-set with only finitely many finite $H$-orbits (i.e., the union of infinite $H$-orbits has a finite complement). 

Then the space of ends $\mathcal{E}^\star_G(X)$ of the $G$-set $X$ has a dense subset of cardinal $\le|X|$; in particular its cellularity is $\le |X|$.
\end{lem}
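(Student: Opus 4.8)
The plan is to exhibit an explicit dense subset of $\mathcal{E}^\star_G(X)$ of cardinality $\le|X|$, built orbit by orbit from the infinite $H$-orbits, and then invoke the standard inequality $\mathrm{cellularity}\le\mathrm{density}$. Write $X_\infty\subset X$ for the union of the infinite $H$-orbits; by hypothesis $X\setminus X_\infty$ is finite. Enumerate the infinite $H$-orbits as $(O_i)_{i\in I}$, so that $|I|\le|X|$. The key structural input is the $G$-set version of Lemma \ref{actfg}: if $Y\subset X$ is $H$-commensurated (in particular if it is $G$-commensurated), then, taking a finite generating set $S$ of $H$ and writing $Y\cap O=Y_O$, the disjoint decomposition $sY\tu Y=\bigsqcup_O sY_O\tu Y_O$ over $H$-orbits $O$ (each $s\in S$ preserves every $H$-orbit) forces $Y\cap O\in\{\emptyset,O\}$ for all but finitely many orbits $O$. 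This gives the crucial consequence: \emph{if $[Y]\ne 0$ in $\mathcal{P}^\star_G(X)$, then $Y\cap O_{i_0}$ is infinite for some $i_0$.} Indeed, if every $Y\cap O_i$ were finite, then on the cofinitely many ``good'' infinite orbits $O_i$ the alternative $Y\cap O_i\in\{\emptyset,O_i\}$ would force $Y\cap O_i=\emptyset$; so $Y$ would meet only finitely many orbits, each in a finite set, making $Y$ finite, a contradiction.

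Next I would produce the witnessing ends. For each infinite orbit $O_i$, viewed as a transitive $H$-set, the group $H$ is finitely generated, so by (the one-orbit case of) Theorem \ref{caract_metri} — equivalently by the elementary argument of Remark \ref{fg_met} applied to the finite generating set $S$, the kernel of $f\mapsto(f-s\cdot f)_{s\in S}$ on almost $H$-invariant functions $O_i\to\Z/2\Z$ being the constants and $K(O_i)^S$ being countable — the $H$-set $O_i$ is metrizably-ended. Hence $\mathcal{E}^\star_H(O_i)$ is a metrizable Stone space and admits a countable dense subset $D_i$. The link back to $X$ is the restriction homomorphism: intersection with $O_i$ sends $G$-commensurated subsets to $H$-commensurated subsets of $O_i$ and sends finite sets to finite sets, so it descends to a unital Boolean algebra homomorphism $r_i\colon\mathcal{P}^\star_G(X)\to\mathcal{P}^\star_H(O_i)$, $[Y]\mapsto[Y\cap O_i]$. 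Stone duality turns this into a continuous map $r_i^\ast\colon\mathcal{E}^\star_H(O_i)\to\mathcal{E}^\star_G(X)$, sending an ultrafilter $\xi$ to $r_i^{-1}(\xi)$. I then set $D=\bigcup_{i\in I}r_i^\ast(D_i)\subset\mathcal{E}^\star_G(X)$, whose cardinality is at most $|I|\cdot\aleph_0\le|X|\cdot\aleph_0=|X|$, using that $X$ is infinite.

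Finally I would check that $D$ is dense, i.e.\ that every nonzero $[Y]\in\mathcal{P}^\star_G(X)$ lies in some ultrafilter of $D$. Given such $Y$, choose $i_0$ with $Y\cap O_{i_0}$ infinite as above; then $r_{i_0}([Y])=[Y\cap O_{i_0}]$ is a nonzero element of $\mathcal{P}^\star_H(O_{i_0})$. Since $D_{i_0}$ is dense in the Stone dual $\mathcal{E}^\star_H(O_{i_0})$, there is $\xi_0\in D_{i_0}$ with $[Y\cap O_{i_0}]\in\xi_0$, whence $[Y]\in r_{i_0}^{-1}(\xi_0)=r_{i_0}^\ast(\xi_0)\in D$. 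Thus every nonempty basic clopen subset of $\mathcal{E}^\star_G(X)$ meets $D$, so $D$ is dense of cardinality $\le|X|$; since a family of pairwise disjoint nonempty open sets injects into any dense set, the cellularity is $\le|X|$ as well. The only genuinely delicate point is the reduction step in the first paragraph: the application of the $G$-set analogue of Lemma \ref{actfg} is exactly what prevents a commensurated set from being spread thinly (finitely) across infinitely many orbits, and hence forces the concentration on a single orbit that makes the restriction maps $r_i$ detect it.
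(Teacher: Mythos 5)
Your proposal is correct and follows essentially the same route as the paper: decompose $X$ into $H$-orbits, use that each infinite orbit of the finitely generated group $H$ has a separable (metrizable) space of ends, and use the finite generating set of $H$ to show that any infinite commensurated subset must meet some single orbit in an infinite set, so that the union of the (countably many per orbit) ends coming from the orbits is dense. The only cosmetic difference is that you carry $G$ along via the restriction homomorphisms $r_i$ rather than first reducing to $G=H$ through the surjection $\mathcal{E}^\star_H(X)\to\mathcal{E}^\star_G(X)$, and you treat the finitely-many-orbits case uniformly instead of separately; both points are harmless.
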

\begin{proof}
Since there is a continuous surjective map $\mathcal{E}^\star_H(X)\to \mathcal{E}^\star_G(X)$, it is enough to show that $\mathcal{E}^\star_H(X)$ has a dense subset of cardinal $\le|X|$ (so $G$ will not appear again in the proof; in particular we can suppose $G=H$).

If $X$ is finite the result is trivial; otherwise $X$ is infinite. If $X$ has only finitely many $H$-orbits, then $|X|=\aleph_0$, and $\mathcal{E}^\star_H(X)$ is a metrizable Stone space, hence is separable.

Now assume that $X$ has infinitely many $H$-orbits, written $(X_i)_{i\in I}$; then each $\mathcal{E}^\star_H(X_i)$ is separable, and hence the union $U=\bigcup_i\mathcal{E}^\star_H(X_i)$ has a dense subset of cardinal $\le |X|$.

To complete the proof, it is therefore enough to show that $U$ is dense, and this is where the assumption that each $X_i$ is infinite (up to finitely many many exceptions) will play a role. Indeed, the density of $U$ is equivalent to the statement that whenever $Y$ is an infinite commensurated subset of $X$, its closure in the end compactification has a nonempty intersection with $U$. This amounts to showing that $Y_i=Y\cap X_i$ is infinite for some $i$. Suppose by contradiction the contrary.

Indeed, for every $s\in H$, the symmetric difference $sY\bigtriangleup Y$ is finite, and hence its union when $s$ ranges over some finite generating subset of $H$ is still finite. Hence, for all but finitely many $i$, $Y_i$ is $H$-invariant, hence empty or equal to $X_i$. Excluding those finitely many finite orbits, this means that for all but finitely many $i$, $Y_i$ is empty. Since $Y_i$ is finite for all $i$, this implies that $Y$ is finite, a contradiction.
\end{proof}


\begin{thm}\label{densalpha}
Let $G$ be a group of infinite cardinal $\alpha$, that is not locally finite. Then the space of ends $\mathcal{E}^\star(G)$ has a dense subset of cardinal $\le\alpha$.
\end{thm}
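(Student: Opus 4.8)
The plan is to reduce the statement immediately to Lemma \ref{infcysep}, which was set up precisely for this purpose. Recall that $\mathcal{E}^\star(G)$ is by definition the space of ends of $G$ viewed as a $G$-set under left multiplication, i.e.\ $\mathcal{E}^\star_G(G)$. Since $G$ is not locally finite, by definition it contains a finitely generated subgroup $H$ that is infinite, and I would fix such an $H$ at the outset.

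Next I would look at the $G$-set $X=G$ (left regular action) and restrict the action to $H$. The $H$-orbits are then exactly the right cosets $Hx$ for $x\in G$, each of which has cardinal $|H|$, hence is infinite. Consequently the left regular $G$-set $G$ has \emph{no} finite $H$-orbit whatsoever; in particular the union of its infinite $H$-orbits is all of $G$ and has finite (empty) complement, so the hypothesis of Lemma \ref{infcysep} is satisfied trivially. Moreover $X=G$ is infinite because $\alpha$ is infinite. Applying Lemma \ref{infcysep} with ambient group $G$, finitely generated subgroup $H$, and $G$-set $X=G$, I would conclude that $\mathcal{E}^\star_G(G)=\mathcal{E}^\star(G)$ has a dense subset of cardinal $\le|X|=|G|=\alpha$, which is exactly the claim.

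There is essentially no obstacle beyond Lemma \ref{infcysep} itself, where the real work has already been done (assembling the dense union $\bigcup_i \mathcal{E}^\star_H(X_i)$ of the separable pieces coming from the individual infinite $H$-orbits, and verifying density through the commensuration argument). The only point requiring care here is the choice of action: one must restrict the \emph{left} regular action to $H$ so that the orbits become the right cosets $Hx$, all of size $|H|$, and then invoke non-local-finiteness to guarantee these orbits are infinite. Everything else is immediate.
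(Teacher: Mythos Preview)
Your proof is correct and is essentially identical to the paper's own argument: choose an infinite finitely generated subgroup $H$, observe that all $H$-orbits on $G$ (the right cosets $Hg$) are infinite, and apply Lemma \ref{infcysep}. The paper's proof is just a two-line version of what you wrote.
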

\begin{proof}
Let $H$ be an infinite, finitely generated subgroup. Since $H$ has only infinite orbits for its left action on $G$, Lemma \ref{infcysep} applies and hence $\mathcal{E}^\star(G)$ has a dense subset of cardinal $\le\alpha$.
\end{proof}




We now prove Theorem \ref{count_cell}. The main implication will be a particular case 
of Theorem \ref{densalpha}.

\begin{thm}\label{count_cell2}
Let $G$ be a countable group. The following are equivalent.
\begin{enumerate}
\item\label{icell_1} $G$ is infinite locally finite;
\item\label{icell_2} $G$ has a left almost invariant map onto $\N$ with finite fibers;
\item\label{icell_3} $\mathcal{E}^\star(G)$ has a continuous map onto the Stone-\v Cech boundary of $\N$;
\item\label{icell_4} $\mathcal{E}^\star(G)$ has a family of $2^{\aleph_0}$ pairwise disjoint open subsets;
\item\label{icell_5} $\mathcal{E}^\star(G)$ does not have countable cellularity;
\item\label{icell_6} $\mathcal{E}^\star(G)$ is not separable.
\end{enumerate}
\end{thm}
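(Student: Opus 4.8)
The plan is to establish the cycle of implications $(1)\Rightarrow(2)\Rightarrow(3)\Rightarrow(4)\Rightarrow(5)\Rightarrow(6)\Rightarrow(1)$, the last (and only substantial) step being handed to us by Theorem \ref{densalpha}.

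For $(1)\Rightarrow(2)$, I would write the infinite locally finite group $G$ as a strictly increasing union $G=\bigcup_n G_n$ of finite subgroups with $G_0=\{1\}$, and define $f\colon G\to\N$ by setting $f(x)=n$ when $x\in G_n\smallsetminus G_{n-1}$. Each fiber $G_n\smallsetminus G_{n-1}$ is finite, and $f$ is surjective (after discarding empty levels). Left almost invariance is the one thing to check: fixing $g\in G_m$, whenever $f(x)=n>m$ one has $gx\in G_n$, and $gx\notin G_{n-1}$ (otherwise $x=g^{-1}(gx)\in G_{n-1}$), so $f(gx)=f(x)$; thus $f(gx)\neq f(x)$ only on the finite set $f^{-1}(\{0,\dots,m\})$.

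For $(2)\Rightarrow(3)$, the preimage map $S\mapsto f^{-1}(S)$ carries $2^\N$ into $\mathcal{P}_{(G)}(G)$, since left almost invariance of $f$ makes each $f^{-1}(S)$ commensurated, and since the fibers of $f$ are finite, $f^{-1}(S)$ is finite precisely when $S$ is. Hence it descends to an injective Boolean-algebra homomorphism $2^\N/\mathrm{fin}\hookrightarrow\mathcal{P}^\star_G(G)$, whose Stone dual is a continuous surjection of $\mathcal{E}^\star(G)$ onto the Stone dual of $2^\N/\mathrm{fin}$, namely the Stone-\v Cech boundary of $\N$. For $(3)\Rightarrow(4)$ I would fix an almost disjoint family $(A_i)_{i\in 2^{\aleph_0}}$ of infinite subsets of $\N$ (pairwise finite intersections); the corresponding clopen subsets of the Stone-\v Cech boundary are nonempty and pairwise disjoint, so pulling them back along the surjection of $(3)$ yields $2^{\aleph_0}$ nonempty pairwise disjoint open subsets of $\mathcal{E}^\star(G)$.

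The implication $(4)\Rightarrow(5)$ is immediate from the definition of cellularity, and $(5)\Rightarrow(6)$ is the standard fact that a separable space has countable cellularity (a countable dense subset meets every member of a pairwise-disjoint family of nonempty open sets, forcing the family to be countable). Finally $(6)\Rightarrow(1)$ is proved by contraposition: if $G$ is not infinite locally finite, then it is either finite (whence $\mathcal{E}^\star(G)$ is empty, trivially separable) or infinite and not locally finite, in which case Theorem \ref{densalpha} with $\alpha=\aleph_0$ provides a countable dense subset, so $\mathcal{E}^\star(G)$ is separable. This last step is the genuinely nontrivial one, but it has already been dispatched in the greater generality of Theorem \ref{densalpha}; everything else is bookkeeping with Stone duality together with the standard almost disjoint family.
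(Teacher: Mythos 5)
Your proof is correct and follows essentially the same route as the paper: the paper's $(1)\Rightarrow(2)$ is delegated to its Proposition \ref{notame} (which proves the same construction $\ell(g)=\min\{n: g\in G_n\}$, even in the stronger bi-invariant form), $(2)\Rightarrow(3)$ is its ``immediate by Stone duality,'' $(3)\Rightarrow(4)$ uses the same almost disjoint family, and $(6)\Rightarrow(1)$ is the same contraposition via Theorem \ref{densalpha}. No gaps; your spelled-out verification of left almost invariance and of the injectivity of $2^\N/\mathrm{fin}\hookrightarrow\mathcal{P}^\star_G(G)$ matches what the paper leaves implicit.
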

\begin{proof}
(\ref{icell_1}) implies (\ref{icell_2}): see Proposition \ref{notame} for this crucial but easy implication, essentially due to Scott and Sonneborn.

(\ref{icell_2}) implies (\ref{icell_3}): immediate by Stone duality.

(\ref{icell_3}) implies (\ref{icell_4}): it is enough to find such a family in the Stone-\v Cech boundary of $\N$: this is a well-known immediate consequence of the existence of $2^{\aleph_0}$ infinite subsets of $\N$ with pairwise finite intersection.

(\ref{icell_4})$\Rightarrow$(\ref{icell_5})$\Rightarrow$(\ref{icell_6}) is trivial.

We finish less trivial implication, namely (\ref{icell_6}) implies (\ref{icell_1}): by contraposition it follows from Theorem \ref{densalpha}.
\end{proof}



Here is a generalization of the cellularity statement of Lemma \ref{infcysep}.

\begin{prop}\label{inforcoce}
Let $H$ be a finitely generated group and $X$ an $H$-set of cardinal $\alpha$ with only infinite orbits. Then the space of ends $\mathcal{E}^\star_H(X)$ of the $H$-set $X$ has cellularity $\le\alpha$.
\end{prop}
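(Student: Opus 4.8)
The plan is to bound the cellularity through antichains in the Boolean algebra $\mathcal{P}^\star_H(X)$. Since the clopen sets form a basis of the Stone space $\mathcal{E}^\star_H(X)$, any family of pairwise disjoint nonempty open sets refines to one of pairwise disjoint nonempty clopen sets of the same cardinality, and such a family corresponds to a family $(Y_j)_{j\in J}$ of infinite $H$-commensurated subsets of $X$ with $Y_j\cap Y_{j'}$ finite for $j\neq j'$. Thus it suffices to prove that every such ``almost disjoint'' family has cardinality $\le\alpha$.

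The first key step is a localization to orbits. For an orbit $O\in H\backslash X$ and a commensurated subset $Y$, the intersection $Y\cap O$ is $H$-commensurated in the transitive $H$-set $O$, and $Y\mapsto Y\cap O$ is a Boolean homomorphism $\mathcal{P}^\star_H(X)\to\mathcal{P}^\star_H(O)$. I would fix, for each $j$, an orbit $O(j)$ with $Y_j\cap O(j)$ infinite; the existence of such an orbit is where the hypothesis of infinite orbits enters. Indeed, if $Y_j\cap O$ were finite for every $O$, then by the orbit version of Lemma \ref{actfg} (whose proof adapts verbatim, replacing right cosets by $H$-orbits) all but finitely many orbits would meet $Y_j$ either trivially or fully, hence trivially since they are infinite while $Y_j\cap O$ is finite; this would confine $Y_j$ to finitely many orbits, on each of which it is finite, contradicting that $Y_j$ is infinite.

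The second key step is that within a single orbit the cellularity is countable. Since $O$ is a transitive $H$-set and $H$ is finitely generated (hence countable), $O$ is metrizably-ended by Theorem \ref{caract_metri} (or Remark \ref{fg_met}), so $\mathcal{E}^\star_H(O)$ is a compact metrizable, hence separable, space, and $\mathcal{P}^\star_H(O)$ admits no uncountable antichain. For a fixed $O$, the classes of $Y_j\cap O$ over those $j$ with $Y_j\cap O$ infinite form an antichain of $\mathcal{P}^\star_H(O)$: they are nonzero, and pairwise products vanish because $(Y_j\cap O)\cap(Y_{j'}\cap O)\subseteq Y_j\cap Y_{j'}$ is finite. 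Hence $\{j: Y_j\cap O\ \text{infinite}\}$ is countable for each $O$.

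Finally I would assemble these facts. Every $j$ lies in $\{j: Y_j\cap O(j)\ \text{infinite}\}$, so the index set $J$ is covered by the sets $\{j: Y_j\cap O\ \text{infinite}\}$ as $O$ ranges over the at most $\alpha$ orbits, each of these sets being countable; this gives $|J|\le\alpha\cdot\aleph_0=\alpha$ (the presence of an infinite orbit forcing $\alpha$ to be infinite). The main obstacle is the localization step, that is, ruling out a commensurated set ``spread thinly'' over infinitely many orbits with finite trace on each; this is precisely eliminated by combining Lemma \ref{actfg} with the infiniteness of the orbits, and is the same mechanism underlying the density argument of Lemma \ref{infcysep}.
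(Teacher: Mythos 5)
Your proof is correct. It rests on the same two ingredients as the paper's argument --- the orbit version of Lemma \ref{actfg}, and the countability of the commensurated-subset algebra of a finitely generated group acting on a countable set with finitely many orbits (Remark \ref{fg_met}/Theorem \ref{caract_metri}) --- but organizes the counting differently. The paper argues by contradiction: assuming an almost-disjoint family of size $>\alpha$, it applies Lemma \ref{actfg} to each member to get a finite exceptional set $F_i$, pigeonholes over the $\alpha$ finite subsets of $X$ to fix a single $F$, observes that the pairwise intersections outside $HF$ are finite $H$-invariant hence empty, and concludes that $>\alpha$ of the members are confined to the countable sub-$H$-set $HF$, whose commensurated algebra is only countable. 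You instead fiber the family over the orbits: each member has infinite trace on some orbit (this is where you use Lemma \ref{actfg} together with the infiniteness of orbits, exactly as in the paper's step showing the pairwise intersections vanish off $HF$), and over each fixed orbit the members with infinite trace form an antichain in the countable algebra $\mathcal{P}^\star_H(O)$, hence are countably many; summing over the $\le\alpha$ orbits gives the bound directly. Your version is a direct count rather than a contradiction-plus-pigeonhole, which is arguably cleaner and makes visible exactly where each hypothesis is used; the paper's version has the minor advantage of invoking Lemma \ref{actfg} only once per set and of producing the slightly stronger intermediate statement that any almost-disjoint family of size $>\alpha$ would have to live inside a single countable invariant subset. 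Both are complete proofs.
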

\begin{proof}
This is trivial if $\alpha=0$, hence we suppose $\alpha>0$, and hence $\alpha$ is infinite.

By contradiction, suppose that the conclusion fails. Then $\mathcal{E}^\star_H(X)$ has a family of $>\alpha$ pairwise disjoint nonempty clopen subsets. This means that there is a family $(X_i)_{i\in I}$ of infinite $G$-commensurated subsets of $X$, with pairwise finite intersection, with $|I|>\alpha$.

By Lemma \ref{actfg}, for each $i\in I$, there exists a finite subset $F_i$ such that $X_i\smallsetminus HF_{i}$ is $H$-invariant. Since the number of finite subsets of $X$ is $\alpha$, there exists a finite subset $F$ of $X$ and an subset $J$ of $I$ with $|J|>\alpha$ such that $F_i=F$ for all $i\in J$. 
For $i\neq j$ in $J$, on the one hand the subset $(X_i\cap X_j)\smallsetminus HF$ is finite, and on the other hand it is $H$-invariant. Hence it is empty. By cardinality, it follows that there exists a subset $K$ of $J$ with $|K|>\alpha$ (so $K$ is uncountable) such that $X_i\subset HF$ for all $i\in K$.  Since $H$ is finitely generated, the number of $H$-commensurated subsets of $HF$ is countable. We reach a contradiction.
\end{proof}


\begin{thm}\label{cell_uncount2}
Let $G$ be an infinitely-ended group of cardinal $\alpha$. Then the cellularity of $\mathcal{E}^\star(G)$ is $\ge \alpha$; actually there is a nonempty clopen subset $U$ of the end compactification of $G$, with nonempty intersection with $\mathcal{E}^\star(G)$, and a subset $T$ of $G$ of cardinal $\alpha$ such that the $Uh$, for $h\in T$, are pairwise disjoint.
\end{thm}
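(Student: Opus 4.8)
The plan is to work through the Stone-duality dictionary: clopen subsets $U$ of the end compactification of $G$ correspond to left $G$-commensurated subsets $A\subseteq G$; such a $U$ meets the space of ends $\mathcal{E}^\star(G)$ exactly when $A$ is infinite; the right translate $Uh$ corresponds to $Ah$; and $Uh\cap Uh'=\emptyset$ iff $Ah\cap Ah'=\emptyset$. Thus the entire statement reduces to producing a single infinite $G$-commensurated set $A$ together with a subset $T\subseteq G$, $|T|=\alpha$, whose right translates $Ah$ are pairwise disjoint. Combined with Theorem \ref{densalpha} (cellularity $\le\alpha$), this yields cellularity $=\alpha$. I concentrate on this lower bound and treat the case $\alpha$ uncountable, which is the case of interest (Theorem \ref{cell_uncount}); note that an uncountable infinitely-ended group is not locally finite, since uncountable locally finite groups are $1$-ended by Holt. (For countable $\alpha$ the bound $\ge\aleph_0$ is automatic, as infinite Hausdorff spaces have infinite cellularity.)

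The construction uses a tree action. By the Stallings--Dicks--Dunwoody theorem, $G$ acts on a tree $T$ with finite edge stabilizers and with two loxodromic elements having no common endpoint; concretely this comes from a splitting $A\ast_C B$ or $A\ast_C$ with $C$ finite. As $\alpha$ is uncountable, one of the vertex groups has cardinality $\alpha$; let $w$ be a vertex with $|G_w|=\alpha$, let $e$ be an edge incident to $w$ oriented towards $w$, and $C=G_e$ its finite stabilizer, so $C\le G_w$. Fix the base vertex $o$ to be the origin of $e$, and let $P^-$ be the component of $T\smallsetminus\{e\}$ containing $o$, so $w\notin P^-$. Define
\[ A=\{x\in G: x^{-1}o\in P^-\}. \]

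I then have to check three things. First, $A$ is $G$-commensurated: for $s\in G$ one computes $sA=\{y:y^{-1}(so)\in P^-\}$, so $y\in A\tu sA$ exactly when the geodesic $y^{-1}[o,so]$ crosses $e$, i.e.\ when the edge $ye$ lies on the finite segment $[o,so]$; since $e$ has finite stabilizer $C$, there are only finitely many such $y$. This is the crucial point, and the reason one must use an \emph{edge} (with finite stabilizer) rather than a vertex orbit: infinite vertex stabilizers would otherwise make $A\tu sA$ infinite. Second, $A$ is infinite (with infinite complement): choosing a loxodromic $t$ with an endpoint in $P^-$, the points $t^{-n}o$ eventually lie in $P^-$, so $A\supseteq\{t^n:n\ge n_0\}$. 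Thus $U=\hat A$ is a nonempty clopen meeting $\mathcal{E}^\star(G)$. Finally, for $h\in G_w$ one computes $Ah=\{x:x^{-1}o\in h^{-1}P^-\}$, the origin-side half-tree of the edge $h^{-1}e$, which is again incident to $w$ since $h$ fixes $w$. The edges $h^{-1}e$ and $h'^{-1}e$ coincide iff $Ch=Ch'$, and distinct edges at $w$ have disjoint origin-side components; hence $Ah\cap Ah'=\emptyset$ whenever $Ch\neq Ch'$. Taking $T$ to be a transversal of $C\backslash G_w$, of cardinality $[G_w:C]=\alpha$ (as $C$ is finite and $|G_w|=\alpha$), the sets $Ah$ for $h\in T$ are pairwise disjoint, giving $\alpha$ pairwise disjoint nonempty clopen subsets $Uh$, so the cellularity is $\ge\alpha$.

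I expect the main obstacle to be the commensuration check, namely isolating the role of the finite edge stabilizer so that $A\tu sA$ remains finite despite possibly infinite vertex stabilizers; once $A$ is correctly set up from an edge, the pairwise disjointness of the family follows cleanly from the elementary tree geometry of distinct edges sharing the vertex $w$.
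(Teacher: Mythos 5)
Your proposal is correct and takes essentially the same route as the paper: a half-tree determined by an edge with finite stabilizer in the Stallings--Dicks--Dunwoody tree yields the left $G$-commensurated set, and right-translating by a transversal of the finite edge stabilizer $C$ inside a vertex stabilizer of cardinal $\alpha$ produces the pairwise disjoint family $Uh$. You are in fact slightly more careful than the paper on one point --- you justify that some vertex stabilizer has cardinal $\alpha$ when $\alpha$ is uncountable (the paper asserts this without comment) --- though, like the paper, your argument does not produce the single-$U$ family of size $\aleph_0$ when $\alpha=\aleph_0$ and all vertex stabilizers are finite (e.g.\ $\Z/2\ast\Z/3$), where you fall back on the automatic bound on cellularity.
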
%
\begin{proof}
We argue by constructing an infinite subset $M$ of $G$ and a subset $T$ of $G$ of cardinal $\alpha$ such that the $Mh$ for $h\in T$ are pairwise disjoint.

By the Holt/Dicks-Dunwoody generalization of Stallings' theorem \cite[Theorem IV.6.10]{DD}, there exists an inversion-free action of $G$ on an unbounded tree, a single edge orbit, and finite edge stabilizers. We fix an oriented edge $e$, denote by $e'$ the underlying (non-oriented) edge, and $F$ its (finite) stabilizer. Let $S$ be the set of $s\in G$ such that $e$ and $se$ are equal or adjacent. Since there is a single edge orbit, $S$ generates $G$. 

Let $e^\sharp$ be the set of edges that are on the ``forward" side of $e$ (not including $e$ itself). Define $M$ as the set of $g\in G$ such that $g^{-1}e\in e^\sharp$. We claim that $M$ is left $G$-commensurated. It is enough to show that for every $s\in S$, the set $M_s$ of $g\in M$ such that $sg\notin M$ is finite. Indeed, suppose that $g\in M_s$: we have $g^{-1}e\in e^\sharp$ and $g^{-1}s^{-1}e\notin e^\sharp$. Since $g^{-1}e$ and $g^{-1}s^{-1}e$ are adjacent, this implies that $g^{-1}s^{-1}e'=e'$. This means that $sg\in F$, that is, $g\in s^{-1}F$. Hence $M_s\subset s^{-1}F$ is finite.

Clearly, $M$ is infinite. Let $v$ be the origin vertex of $e$, and $H$ the stabilizer of $v$. We claim that for every $h\in H\smallsetminus F$ we have $M\cap Mh=\emptyset$. Indeed, for $g\in Mh$, we have $gh^{-1}\in M$, which means $hg^{-1}e\in e^\sharp$, that is, $g^{-1}e\in h^{-1}e^{\sharp}=f^\sharp$ where $f=h^{-1}e$. Since $h\in F$, $e$ and $f$ are distinct edges with the same origin, and hence $e^\sharp$ and $f^\sharp$ are disjoint. Hence $g^{-1}e\notin e^\sharp$, which means $g\notin M$. 

It follows that for all $h,h'\in H$ such that $Fh\neq Fh'$, we have $Mh$ and $Mh'$ disjoint. Since $H$ has cardinal $\alpha$, so does $F\backslash H$, so there is a subset $T\subset H$ of cardinal $\alpha$ whose projection to $F\backslash H$ is injective. Hence the $Mh$ for $h\in T$ are pairwise disjoint. 
\end{proof}

Combining Theorems \ref{densalpha} and \ref{cell_uncount2} yields:
\begin{cor}
For an infinitely-ended group $G$ that is not (countable locally finite), of cardinal $\alpha$, both the cellularity and the density (smallest cardinal of a dense subset) of $\mathcal{E}^\star(G)$ are equal to $\alpha$.\qed
\end{cor}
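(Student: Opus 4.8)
The plan is to sandwich both cardinal invariants between $\alpha$ and $\alpha$, using the two preceding theorems together with the elementary comparison between cellularity and density.

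First I would observe that the hypotheses force $G$ to be not locally finite, so that Theorem \ref{densalpha} applies. Indeed, suppose $G$ were locally finite; being infinitely-ended it is in particular infinite. By the classification of locally finite groups recalled above (finite groups are $0$-ended, and, by Holt's theorem, uncountable locally finite groups are $1$-ended), an infinitely-ended locally finite group must be countable. But this is exactly the case excluded by the hypothesis ``not (countable locally finite)''. Hence $G$ is not locally finite, and since it is infinitely-ended it is infinite, so $\alpha$ is an infinite cardinal and Theorem \ref{densalpha} is available.

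Now Theorem \ref{densalpha} provides a dense subset of $\mathcal{E}^\star(G)$ of cardinal $\le\alpha$, so the density $d$ of $\mathcal{E}^\star(G)$ satisfies $d\le\alpha$. On the other hand, Theorem \ref{cell_uncount2} produces a family of $\alpha$ pairwise disjoint nonempty (cl)open subsets, so the cellularity $c$ satisfies $c\ge\alpha$. Finally I would invoke the standard inequality $c\le d$, valid in every topological space: if $D$ is a dense subset realizing the density and $(U_i)_{i\in I}$ is a family of pairwise disjoint nonempty open sets, then picking $x_i\in U_i\cap D$ yields $|I|$ pairwise distinct points of $D$, whence $|I|\le|D|=d$ and therefore $c\le d$.

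Combining the three inequalities gives $\alpha\le c\le d\le\alpha$, so $c=d=\alpha$, which is the claimed conclusion for both the cellularity and the density. The only genuine point requiring care is the first step, namely recognizing that ``infinitely-ended'' together with ``not countable locally finite'' already rules out local finiteness (through Holt's theorem), thereby licensing the use of Theorem \ref{densalpha}; once this is in place, the statement is a formal consequence of the two theorems and the inequality $c\le d$.
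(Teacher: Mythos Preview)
Your proof is correct and follows exactly the approach the paper intends: the corollary is stated immediately after Theorems \ref{densalpha} and \ref{cell_uncount2} with a bare \qed, and your argument spells out precisely the combination of these two results with the standard inequality $c\le d$. The only detail you make explicit that the paper leaves implicit is why the hypothesis rules out local finiteness (via Holt's theorem), which is indeed needed to invoke Theorem \ref{densalpha}.
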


\section{Theorem \ref{exis}: both alternatives can hold}\label{s_both}

\begin{prop}\label{coupme}
Let $G$ be the free product of a family $(G_i)_{i\in I}$ of nontrivial groups ($I$ index set with $0\notin I$). Then $G$ has a left $G$-almost invariant map $f$ onto $I\sqcup\{0\}$), namely mapping $1$ to $0$, and any nontrivial element to the last type of letter in its reduced form. In particular, if $I$ is infinite (say of cardinal $\alpha$), $G$ is not tamely-ended, and the end weight of $G$ is $\ge 2^\alpha$, with equality if $G$ has cardinal $\alpha$. For instance, any infinite free group of infinite rank is non-tamely-ended.
\end{prop}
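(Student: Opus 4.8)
The plan is to first prove the key claim that the ``last-letter-type'' map $f$ is left $G$-almost invariant, and then read off the stated consequences. For the almost invariance, I would reduce to single letters. Fix $g\in G$ and consider the ``bad set'' $B_g=\{h\in G: f(gh)\neq f(h)\}$; the goal is that $B_g$ is finite for every $g$. The set of $g$ with $B_g$ finite is a subgroup of $G$: it is closed under inverses because $B_{g^{-1}}=g\cdot B_g$ (substitute $h\mapsto gh$), and closed under products because $B_{g_1g_2}\subseteq B_{g_2}\cup g_2^{-1}B_{g_1}$, a union of two finite sets. Since the nontrivial elements of the factors $G_i$ (the ``letters'') generate $G$, it suffices to check that $B_a$ is finite when $a\in G_j\smallsetminus\{1\}$ is a single letter.

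For such a letter $a$, I would argue using reduced forms in the free product that left multiplication by $a$ can only affect the \emph{first} letter of a reduced word $h=s_1\cdots s_n$: it either leaves $s_1$ in a distinct factor (no cancellation), absorbs $a$ into $s_1$ when both lie in $G_j$, or cancels $s_1$ entirely when $a=s_1^{-1}$. In all these cases the last letter $s_n$, and hence its type $f(h)$, is unchanged, unless the whole word collapses. The only total collapses are $h=1$ (where $f(ah)=j\neq 0=f(h)$) and $h=a^{-1}$ (where $f(ah)=0\neq j=f(h)$). Thus $B_a\subseteq\{1,a^{-1}\}$ is finite, which proves that $f$ is left $G$-almost invariant.

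With the claim in hand, the consequences follow quickly. The map $f$ is visibly surjective onto $I\sqcup\{0\}$, so if $I$ is infinite its image is infinite and $G$ is untamely-ended by Definition \ref{deftame}; post-composing $f$ with a surjection $I\sqcup\{0\}\to\N$ (which preserves almost invariance, as it can only shrink the bad sets) exhibits a surjective left almost invariant map onto $\N$. For the end weight, I would note that for any $A\subseteq I$ the preimage $f^{-1}(A)$ is left $G$-commensurated, since $gf^{-1}(A)\tu f^{-1}(A)\subseteq\{h: f(g^{-1}h)\neq f(h)\}=B_{g^{-1}}$ is finite; hence $A\mapsto f^{-1}(A)$ descends to a Boolean homomorphism $\mathcal{P}(I)\to\mathcal{P}^\star_G(G)$. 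This homomorphism is injective because each fibre $f^{-1}(i)$, $i\in I$, is infinite (for a fixed $a\in G_i\smallsetminus\{1\}$ the words $ca$, with $c$ ranging over nontrivial elements of the infinitely many other factors, all lie in $f^{-1}(i)$), so distinct subsets of $I$ have preimages differing by an infinite set. Therefore $|\mathcal{P}^\star_G(G)|\geq 2^\alpha$, i.e.\ the end weight is $\geq 2^\alpha$; and when $|G|=\alpha$ the bound $|\mathcal{P}^\star_G(G)|\leq|\mathcal{P}(G)|=2^\alpha$ forces equality. A free group of infinite rank $\alpha$ is the free product of $\alpha$ copies of $\Z$ and has cardinal $\alpha$, so it falls under this statement.

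The computational heart, and the only step needing genuine care, is the letter-by-letter analysis showing $B_a\subseteq\{1,a^{-1}\}$. The one subtlety to watch is that the basepoint $0$ must be excluded from the index set when checking injectivity: the fibre $f^{-1}(0)=\{1\}$ is finite and hence trivial modulo finite sets, which is exactly why the embedded Boolean algebra is $\mathcal{P}(I)$ rather than $\mathcal{P}(I\sqcup\{0\})$.
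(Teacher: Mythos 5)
Your proposal is correct and takes essentially the same route as the paper's proof: the paper likewise defines $f$ by the last letter type, observes that for $g\in G_i$ the bad set is contained in $\{1,g^{-1}\}$, and concludes via the fact that the elements with finite bad set form a subgroup containing all the factors. Your explicit verification of the single-letter case and of the embedding $\mathcal{P}(I)\hookrightarrow\mathcal{P}^\star_G(G)$ (via the infinitude of the fibres $f^{-1}(i)$) just fills in details the paper leaves as ``straightforward'' or implicit.
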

\begin{proof}
Precisely, if $g\neq 1$, there exists a unique $k\ge 1$, elements $i_1,\dots,i_k$ with $i_j\neq i_{j+1}$ for all $j\in\{1,\dots,k-1\}$, and elements $g_j\in G_{i_j}\smallsetminus \{1\}$, such that $g=g_1\dots g_k$; then by definition, $f(g)=i_k$.

Then it is straightforward that for every $i\in I$ and $g\in G_i$, the set of $h$ such that $f(gh)\neq f(h)$ is contained in $\{1,g^{-1}\}$. Since the set of $G$ for which the set of $h$ such that $f(gh)\neq f(h)$ is finite, is a subgroup, it equals $G$. Thus, $f$ is left $G$-almost invariant. It is clearly surjective. 
\end{proof}



\begin{lem}\label{cardbool}
Let $G$ be a group generated by $H\cup S$ for some subgroup $H$ and finite subset $S$. Then the Boolean algebra $\mathcal{P}_{H,(G)}(G)$ of left $H$-invariant commensurated subsets of $G$ has cardinal bounded above by $\max(\aleph_0,|G|)$.
\end{lem}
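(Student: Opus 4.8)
The plan is to avoid the (genuinely delicate) analysis of the ends of the coset space $H\backslash G$ altogether, and instead to exploit a ``coboundary'' map whose fibers are forced to have at most two elements. Fix a finite set $S$ with $G=\langle H\cup S\rangle$, as in the hypothesis, and consider
\[\Psi\colon\mathcal{P}_{H,(G)}(G)\longrightarrow\prod_{s\in S}\mathcal{F},\qquad \Psi(A)=(sA\tu A)_{s\in S},\]
where $\mathcal{F}$ denotes the set of finite subsets of $G$. This is well defined: by definition every $A\in\mathcal{P}_{H,(G)}(G)$ is $G$-commensurated, so each $sA\tu A$ is finite. The whole point is that the target set is small, of cardinal $\le\max(\aleph_0,|G|)$, so it suffices to bound the fibers of $\Psi$.

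The key step, which I expect to be a one-line verification once set up, is that $\Psi$ is at most $2$-to-$1$. Suppose $\Psi(A)=\Psi(A')$ and put $D=A\tu A'$. For $s\in S$ one computes, using associativity of $\tu$ and $X\tu X=\emptyset$,
\[sD\tu D=(sA\tu sA')\tu(A\tu A')=(sA\tu A)\tu(sA'\tu A')=\emptyset,\]
so $sD=D$ for every $s\in S$. Moreover $D$ is left $H$-invariant, being the symmetric difference of two left $H$-invariant sets, so $hD=D$ for every $h\in H$. Since $\{g\in G:gD=D\}$ is a subgroup of $G$ containing $H\cup S$, it equals $G$; thus $D$ is invariant under the (transitive) left translation action of $G$ on itself, forcing $D\in\{\emptyset,G\}$. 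Hence $A'\in\{A,A^{c}\}$, and each fiber of $\Psi$ has at most two elements.

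Finally I would conclude by cardinal arithmetic. The number of finite subsets of $G$ equals $|G|$ when $G$ is infinite and is finite when $G$ is finite, so in all cases it is $\le\max(\aleph_0,|G|)$; as $S$ is finite, $\bigl|\prod_{s\in S}\mathcal{F}\bigr|\le\max(\aleph_0,|G|)$, and therefore $|\mathcal{P}_{H,(G)}(G)|\le 2\cdot\max(\aleph_0,|G|)=\max(\aleph_0,|G|)$, as claimed. The conceptual obstacle here is really a trap rather than a computation: the tempting route is to regard a left $H$-invariant set as a subset of $H\backslash G$ and analyse its ``ends'', but the family of left $H$-invariant subsets is \emph{not} stable under left translation (left translation destroys left $H$-invariance), and $H$ need not be finitely generated, so this runs into both a handedness mismatch and a finiteness problem. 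The derivative map $A\mapsto(sA\tu A)_{s\in S}$ sidesteps this entirely, using only that commensuration makes the values finite and that $S$ is finite.
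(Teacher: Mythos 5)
Your proof is correct and is essentially the paper's own argument: the paper bounds the cardinality by the same coboundary map, sending $U$ to its set of boundary pairs with respect to the generating set $H\cup S$ (which, for $H$-invariant commensurated $U$, records exactly the finite data of your $(sU\tu U)_{s\in S}$), and likewise concludes that the nonempty fibers consist of a set and its complement. The only difference is how the fiber bound is verified: the paper reconstructs $U$ from $\partial U$ explicitly by following words in $H\cup S$ and counting boundary crossings, whereas your observation that $D=A\tu A'$ is invariant under $H\cup S$, hence under all of $G$, hence equals $\emptyset$ or $G$, is a cleaner route to the same conclusion.
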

\begin{proof}
Let $T$ be a symmetric generating subset of $G$. For $U\subset G$, define its boundary $\partial U$ as the set of pairs $(g,g')$ such that $g'g^{-1}\in T$ and such that exactly one of $g,g'$ belongs to $U$.

We claim that the map $U\mapsto\partial U$, from subsets of $G$ with $1\in U$, to subsets of $G^2$, is injective. We check this by constructing a retraction.
Given a subset $K$ of $G^2$, define $V_K$ as the subset of elements of $G$ that are represented by a word $s_1s_2\dots s_k$ with $s_i\in T$ such that the number of $i\in\{1,\dots,k-1\}$ such that $(s_{i+1}\dots s_k,s_i\dots s_k)\in K$ is even. Now let $U$ be a subset of $G$ with $1\in U$, and write $K=\partial_SU$. Then we readily see that $U=V_K$: indeed it suffices to follow the path and count the number of times where we enter or leave $U$.

Now consider $T=H\cup S$. For $U\subset G$, write $\partial_S U$ for those $(g,g')\in\partial U$ such that $g'g^{-1}\in S$. If $U$ is left $G$-commensurated, then since $S$ is finite, we see that $\partial_S U$ is finite. If $U$ is left $H$-invariant, then $\partial U=\partial_S U$. Therefore the above map $\partial$ maps injectively $H$-invariant commensurated subsets of $G$ containing 1 to finite subsets of $G^2$. Hence, if we relax the condition $1\in U$, its nonempty fibers have at most two elements: the unique one containing 1 and its complement. The upper bound on the cardinality immediately follows.
\end{proof}

\begin{lem}\label{1endfini}
Let $G$ be a group with a subgroup $H$ that is 1-ended and not locally finite. Then the Boolean algebra $\mathcal{P}^\star_G(G)$ of left $G$-commensurated subsets modulo finite subsets is canonically isomorphic to the Boolean algebra $\mathcal{P}_{H,(G)}(G)$ of left $H$-invariant left $G$-commensurated subsets of $G$.
\end{lem}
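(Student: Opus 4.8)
The plan is to exhibit the claimed isomorphism as the obvious candidate and check it is a bijective Boolean homomorphism. Consider the map $\phi\colon \mathcal{P}_{H,(G)}(G)\to\mathcal{P}^\star_G(G)$ sending a left $H$-invariant, left $G$-commensurated subset $U$ to its class $[U]$ modulo finite subsets. This is the composite of the inclusion of $\mathcal{P}_{H,(G)}(G)$ as a Boolean subalgebra of $\mathcal{P}_{(G)}(G)$ with the quotient by the ideal of finite subsets, so it is automatically a Boolean algebra homomorphism. The whole content of the lemma is thus that $\phi$ is bijective, and it is this that I would establish.

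Injectivity is the easy half, and it is where I first use the hypothesis that $H$ is not locally finite: this forces $H$ to be infinite, so every nonempty left $H$-invariant subset of $G$, being a union of right cosets $Hg$ each of cardinality $|H|$, is infinite. Consequently a finite $H$-invariant set is empty. Now if $\phi(U)=\phi(V)$ then $U\tu V$ is finite; but $U\tu V$ is $H$-invariant (a symmetric difference of $H$-invariant sets), hence empty, so $U=V$.

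Surjectivity is the crux. Given a left $G$-commensurated $A\subseteq G$, I would examine its trace on each right coset $Hg$. Since $Hg$ is a single $H$-orbit isomorphic to the regular $H$-set, and $A\cap Hg$ is left $H$-commensurated in $Hg$ (because $h(A\cap Hg)\tu(A\cap Hg)=(hA\tu A)\cap Hg$ is finite), the 1-endedness of $H$ forces $A\cap Hg$ to be finite or cofinite in $Hg$. Define $U$ to be the union of those cosets $Hg$ on which $A\cap Hg$ is cofinite; then $U$ is manifestly left $H$-invariant. It remains to see that $A\tu U$ is finite, which amounts to showing that $A\cap Hg\in\{\emptyset,Hg\}$ for all but finitely many cosets, since on each such ``clean'' coset the symmetric difference with $U$ vanishes, and on any coset it is finite.

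The main obstacle is precisely this finiteness of the number of ``bad'' cosets, and it is not automatic because $H$ need not be finitely generated. Here I would again invoke that $H$ is not locally finite to choose a finitely generated infinite subgroup $H_0\le H$, and apply Lemma \ref{actfg} to $H_0$ and the left $H_0$-commensurated set $A$, obtaining a finite $F\subseteq G$ such that $A\cap H_0 y\in\{\emptyset,H_0 y\}$ for every right $H_0$-coset $H_0 y$ disjoint from $F$. The point is that each bad $H$-coset $Hg$ contains a bad $H_0$-subcoset: if $A\cap Hg$ is finite and nonempty, any $a\in A\cap Hg$ gives an $H_0$-coset $H_0 a$ with $\emptyset\neq A\cap H_0 a$ finite (so proper, as $H_0 a$ is infinite); if $Hg\smallsetminus A$ is finite and nonempty, any $b\in Hg\smallsetminus A$ gives $H_0 b$ with $A\cap H_0 b$ cofinite but proper. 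In either case that $H_0$-coset meets $F$ by Lemma \ref{actfg}, and since at most $|F|$ distinct $H_0$-cosets meet $F$, there are at most $|F|$ bad $H$-cosets. Hence $A\tu U$ is finite; and since $U$ differs from the commensurated set $A$ by a finite set it is itself $G$-commensurated, so $U\in\mathcal{P}_{H,(G)}(G)$ with $\phi(U)=[A]$. This proves surjectivity and completes the argument.
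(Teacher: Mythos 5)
Your proposal is correct and follows essentially the same route as the paper: the paper constructs the retraction $U\mapsto f(U)$ (union of the cosets $Hg$ on which $U$ is cofinite, using 1-endedness of $H$ for the finite-or-cofinite dichotomy) and shows via an infinite finitely generated subgroup of $H$ and Lemma \ref{actfg} that $U\bigtriangleup f(U)$ is finite, which is exactly your surjectivity argument read in the other direction, with the kernel computation matching your injectivity step. No gaps.
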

\begin{proof}
Let $U\subset G$ be a commensurated subset. Then $U\cap Hg$ is left $H$-commens\-urated for every right coset $Hg$ of $H$. Since $H$ is 1-ended, $U\cap Hg$ is either finite or cofinite in $Hg$, for every right coset $Hg$. Let $f(U)$ be the union of all right $H$-cosets $Hg$ such that $U\cap Hg$ is cofinite in $U$: then $V=f(U)$ is the unique left $H$-invariant subset $V\subset G$ such that $(V\bigtriangleup U)\cap Hg$ is finite for every right coset $Hg$. Hence $f$ is a homomorphism of Boolean algebras $\mathcal{P}_{(G)}(G)\to\mathcal{P}(G)$.

Since $H$ is not locally finite, it includes an infinite finitely generated subgroup $L$. Then, for every $U\in\mathcal{P}_{(G)}(G)$, the intersection $U\cap Hg$ is $L$-invariant for all but finitely many cosets $Hg$. Since it is finite or cofinite, we deduce that $U\cap Hg\in\{\emptyset,Hg\}$ for all but finitely many cosets $Hg$. This means that $U\tu f(U)$ is finite for all $U$.
It follows in particular that $f(U)$ is $G$-commensurated as well (which was not clear a priori). Then the kernel of $f$ is the ideal of finite subsets of $G$, and its image is equal to $\mathcal{P}_{H,(G)}(G)$.
\end{proof}

\begin{prop}\label{coume}
Let $H$ be any group of cardinal $\alpha$ and $G=H\ast\Z$, where $\Z=\langle t\rangle$. Then the end weight of $G$ (i.e., the cardinal of $\mathcal{P}^\star_G(G)$ or $\mathcal{P}_{(G)}(G)$) has cardinal $\ge\alpha$, with equality if $H$ is 1-ended and not locally finite.

In particular, if $H$ is a 1-ended countable group, then $H\ast\Z$ is metrizably-ended.
\end{prop}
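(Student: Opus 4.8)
The plan is to prove the two bounds separately: the lower bound \(\ge\alpha\) comes directly from the cellularity results of \S\ref{s_cell}, while the matching upper bound in the \(1\)-ended, non-locally-finite case is obtained by combining Lemmas \ref{cardbool} and \ref{1endfini}. For the lower bound, observe that when \(H\) is nontrivial the group \(G=H\ast\Z\) splits nontrivially over the trivial (finite) subgroup, with one factor (\(\Z\)) infinite, so \(G\) is infinitely-ended. Since the end weight of an infinitely-ended group is at least its cardinal (the corollary to Theorem~\ref{cell_uncount2}, coming from the cellularity bound of that theorem), we get end weight \(\ge |G|=\max(\alpha,\aleph_0)\ge\alpha\). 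The degenerate case \(H\) trivial gives \(G=\Z\), whose end weight is \(2\ge 1=\alpha\), so the lower bound holds in all cases.

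For the equality, I would assume \(H\) is \(1\)-ended and not locally finite; in particular \(H\) is infinite, so \(\alpha\ge\aleph_0\) and \(|G|=\alpha\). Since \(G=H\ast\langle t\rangle\) is generated by \(H\cup\{t\}\) with \(\{t\}\) finite, Lemma \ref{cardbool} bounds the cardinal of the Boolean algebra \(\mathcal{P}_{H,(G)}(G)\) of left \(H\)-invariant commensurated subsets by \(\max(\aleph_0,|G|)=\alpha\). On the other hand, Lemma \ref{1endfini}, which is available precisely because \(H\) is \(1\)-ended and not locally finite, provides a canonical isomorphism \(\mathcal{P}^\star_G(G)\cong\mathcal{P}_{H,(G)}(G)\). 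Hence the end weight equals \(|\mathcal{P}^\star_G(G)|\le\alpha\), which together with the lower bound yields equality \(=\alpha\).

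Finally, for the ``in particular'' statement, if \(H\) is a \(1\)-ended countable group then it cannot be locally finite: an infinite countable locally finite group is infinitely-ended (indeed untamely-ended), contradicting \(1\)-endedness. Thus the equality case applies with \(\alpha=\aleph_0\), giving end weight \(\aleph_0\), and by Definition \ref{defsends} a countable end weight means the space of ends is metrizable, i.e.\ \(G\) is metrizably-ended. I do not expect a genuine obstacle, since the substance is carried by Lemmas \ref{cardbool} and \ref{1endfini}; the only points requiring care are the elementary cardinal arithmetic \(\max(\aleph_0,\alpha)=\alpha\) and the verification that the hypotheses of Lemma \ref{1endfini} hold — in particular that \(1\)-endedness of \(H\) (rather than mere infiniteness) is what rules out local finiteness and thereby makes that lemma applicable.
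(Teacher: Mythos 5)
Your proposal is correct and follows essentially the same route as the paper: the lower bound via the cellularity result (Theorem \ref{cell_uncount2} and its corollary), and the upper bound by combining Lemma \ref{1endfini} with Lemma \ref{cardbool}. Your explicit remark that a countable $1$-ended group cannot be locally finite (so the equality case applies in the ``in particular'' statement) is a point the paper leaves implicit, and is worth making.
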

\begin{proof}
The lower bound follows from Theorem \ref{cell_uncount2}; let us however provide a direct argument: for every $h\in H\smallsetminus\{1\}$, the set of elements whose reduced form finishes with $h$ is commensurated, infinite, and these are pairwise disjoint.

For the upper bound, if $H$ is 1-ended and not locally finite, then the cardinal is exactly $\alpha$, by Lemma \ref{1endfini} and Lemma \ref{cardbool}.
\end{proof}

\begin{prop}
Let $H$ be a group in which every countable subgroup is contained in a countable 1-ended subgroup. Then $G=H\ast\Z$ is tamely-ended. In particular, if $\alpha$ is an uncountable cardinal and $H$ is an abelian group of cardinal $\alpha$, then $H\ast\Z$ has cardinal $\alpha$, is tamely-ended but not metrizably-ended.
\end{prop}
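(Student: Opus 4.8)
The plan is to prove the main implication by reducing, through the hypothesis, to the case already settled in Proposition \ref{coume}, and then to verify the two concrete assertions about abelian $H$. For the main implication, suppose for contradiction that $G=H\ast\Z$ (with $\Z=\langle t\rangle$) is untamely-ended. By the characterization recalled in the introduction there is a surjective left $G$-almost invariant map $f\colon G\to\N$. For each $n$ pick $a_n\in f^{-1}(n)$; each $a_n$ is a reduced word involving finitely many syllables from $H$, so the $H$-syllables of all the $a_n$ generate a countable subgroup $B\leq H$. By hypothesis $B$ lies in a countable $1$-ended subgroup $H_1\leq H$, and I set $G_1=\langle H_1,t\rangle=H_1\ast\langle t\rangle\leq G$. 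All the $a_n$ lie in $G_1$, so $f|_{G_1}\colon G_1\to\N$ is still surjective, and it is left $G_1$-almost invariant since for $\gamma\in G_1$ the set $\{h\in G_1:f(\gamma h)\neq f(h)\}$ sits inside the finite set $\{h\in G:f(\gamma h)\neq f(h)\}$. Hence $G_1$ is untamely-ended. But $H_1$ is countable and $1$-ended, hence not locally finite (an infinite countable locally finite group is infinitely-ended), so Proposition \ref{coume} applies to $G_1=H_1\ast\Z$ and gives end weight $|H_1|=\aleph_0$; thus $G_1$ is metrizably-ended, hence tamely-ended by Theorem \ref{setcoume} — a contradiction. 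The crux is exactly this reduction: the hypothesis is what lets one absorb the finitely many syllables witnessing surjectivity into a countable $1$-ended factor.

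For the two easy assertions of the ``in particular'', if $H$ is abelian of uncountable cardinal $\alpha$ then $|G|=|H\ast\Z|=\max(\alpha,\aleph_0)=\alpha$, and by the lower bound in Proposition \ref{coume} the end weight of $H\ast\Z$ is $\geq\alpha>\aleph_0$, so $\mathcal{P}_G^\star(G)$ is uncountable and $\mathcal{E}^\star(G)$ is non-metrizable. It therefore remains to verify that an abelian $H$ satisfies the hypothesis, namely that every countable subgroup $B\leq H$ is contained in a countable $1$-ended subgroup.

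I would first isolate the sub-fact that a countable abelian group that is infinite, not locally finite, and not virtually cyclic is $1$-ended: by the Freudenthal--Hopf--Specker/Dicks--Dunwoody trichotomy it has $1$, $2$, or infinitely many ends; it is not $2$-ended (not virtually cyclic); and it is not infinitely-ended, since infinite-endedness forces a nontrivial splitting over a finite subgroup, which a Britton/normal-form argument shows cannot occur in an abelian group. Granting this, given $B$ I take $t_0\in H$ of infinite order (available as $H$ is not locally finite) and enlarge: if $\mathrm{tor}(H)$ is infinite I set $C=B+\langle t_0\rangle+T$ for a countably infinite torsion subgroup $T$; otherwise $H/\mathrm{tor}(H)$ is uncountable torsion-free, hence of torsion-free rank $\geq 2$ (a rank-$1$ torsion-free group embeds in $\Q$ and is countable), and I pick a second infinite-order element $s$ with image independent of $t_0$ and set $C=B+\langle t_0,s\rangle$. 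In both cases $C$ is countable, abelian, not locally finite, and not virtually cyclic (infinite torsion, resp. torsion-free rank $\geq2$), hence $1$-ended by the sub-fact, with $B\leq C$.

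I expect two points to require care. The first is the sub-fact itself, i.e. that an abelian group never splits nontrivially as an amalgam or HNN-extension over a finite subgroup; this is the technical heart of the verification and is best handled by the normal-form/Britton's-lemma argument sketched above. The second, and the genuine obstacle to the literal statement, is the locally finite case: if $H$ is locally finite then every subgroup is locally finite, so $H$ has \emph{no} countable $1$-ended subgroup, the hypothesis fails, and the reduction is unavailable (indeed I expect $H\ast\Z$ to be untamely-ended for $H$ infinite locally finite, paralleling $F_\infty\ast\Z\cong F_\infty$). The argument above thus establishes the ``in particular'' for every abelian $H$ that is not locally finite, which already yields, for each uncountable $\alpha$, a tamely-ended but non-metrizably-ended group of cardinal $\alpha$ (for instance $\Q^{(\alpha)}\ast\Z$); the locally finite abelian case would need a separate treatment.
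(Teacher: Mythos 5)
Your proof of the main assertion is correct and is in substance identical to the paper's own argument: assume a surjective left almost invariant map $f\colon G\to\N$ exists, restrict it to a countable sub-free-product $H_1\ast\Z$ with $H_1$ countable and $1$-ended, observe that the restriction is still surjective and almost invariant, and contradict Proposition \ref{coume} via the equivalence ``tamely-ended $\Leftrightarrow$ metrizably-ended'' for countable groups (Theorem \ref{setcoume}). The paper states this in three lines; your added details (why the syllables of chosen witnesses $a_n\in f^{-1}(n)$ generate a countable subgroup of $H$, why $\langle H_1,t\rangle=H_1\ast\langle t\rangle$, and why a countable $1$-ended group is automatically not locally finite so that the equality case of Proposition \ref{coume} applies) are exactly the right points to make explicit.

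On the ``in particular'' you do strictly more than the paper, whose proof dismisses it with ``Given Proposition \ref{coume}'' and never checks that an uncountable abelian group satisfies the hypothesis. Your verification for abelian $H$ that is \emph{not} locally finite is correct: the sub-fact that an abelian group admits no nontrivial splitting over a finite subgroup (so a countable abelian group is $1$-ended iff it is infinite, not virtually cyclic, and not locally finite) is standard via normal forms, and your two-case enlargement of a countable $B$ works. Your caveat is a genuine one and points to an overstatement in the paper rather than a defect of your argument: if $H$ is locally finite, every countable subgroup of $H$ is locally finite, hence $0$-ended or infinitely-ended, so the hypothesis of the proposition fails and the ``in particular'' does not follow as literally stated; it should be read with ``abelian and not locally finite'' (e.g.\ $\Q^{(\alpha)}$ or $\Z^{(\alpha)}$), which still produces the intended examples for every uncountable $\alpha$. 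Your suspicion about the countable locally finite case is moreover correct: for $L=\bigcup L_n$ infinite with $L_n$ finite increasing, the map sending $w$ to $\min\{n:\ell\in L_n\}$ when the reduced form of $w$ ends with $\ell\in L\smallsetminus\{1\}$, and to $0$ otherwise, is a left almost invariant surjection of $L\ast\Z$ onto $\N$ (for $g\in L$ the bad set is contained in $L_{\min\{n:g\in L_n\}}$, and for $g=t^{\pm1}$ it is empty), so $L\ast\Z$ is untamely-ended and the locally finite case genuinely cannot be absorbed into this proposition.
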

\begin{proof}
Given Proposition \ref{coume}, it only remains to show that $G$ is tamely-ended. Indeed, suppose by contradiction that there exists a surjective, almost invariant map $f:G\to\N$. Hence it is surjective in restriction to $L\ast\Z$ for some countable subgroup $L$ of $G$, and by assumption, $L$ is contained in a countable 1-ended subgroup $L'$ of $H$. Hence $L'\ast\Z$ is not tamely-ended, and this contradicts Proposition \ref{coume}.
\end{proof}



Let $f:H\to G$ be a group homomorphism; if it is injective (or more generally has finite kernel), it induces a continuous map $i_*:\mathcal{E}^\star(H)\to \mathcal{E}^\star(G)$. Then $i_*$ often fails to be injective, notably when $G$ is 1-ended and $H$ has $\ge 2$ ends. Here is an injectivity result:

\begin{prop}\label{freepro_inj}
Let $G$ be a free product $H\ast L$ of groups. Then the inclusion $i$ of $H$ into $G$ induces an injective continuous map $i_*:\mathcal{E}^\star(H)\to \mathcal{E}^\star(G)$.
\end{prop}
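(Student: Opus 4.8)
The plan is to dualize. By Stone duality, the continuous map $i_*$ is injective if and only if the associated Boolean algebra homomorphism $i^*:\mathcal{P}^\star_G(G)\to\mathcal{P}^\star_H(H)$ is surjective. This homomorphism is the one induced by restriction along $i$: it sends the class of a left $G$-commensurated subset $A\subseteq G$ to the class of $A\cap H$, which is left $H$-commensurated since for $s\in H$ one has $(A\cap H)\tu s(A\cap H)\subseteq A\tu sA$, a finite set. So it suffices, given an arbitrary left $H$-commensurated subset $B\subseteq H$, to produce a left $G$-commensurated subset $A\subseteq G$ with $A\cap H=B$ up to a finite set.

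The construction I propose exploits the normal form in the free product. For $g\in G$ write its reduced form as an alternating word in $H\smallsetminus\{1\}$ and $L\smallsetminus\{1\}$, and define $q(g)\in H$ to be the last $H$-letter of this word if $g$ ends with an $H$-letter, and $q(g)=1$ otherwise (so $q(1)=1$). Set
\[
A=\{\,g\in G:\;q(g)\in B\,\}.
\]
Since $q(h)=h$ for every $h\in H\smallsetminus\{1\}$, we get $A\cap H=B$ up to the single point $1$, so the restriction condition holds. It remains to prove that $A$ is left $G$-commensurated. As the set of $s\in G$ with $sA\tu A$ finite is a subgroup and $H\cup L$ generates $G$, it is enough to check this for $s\in H$ and for $s\in L$.

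The key observation — and the reason to use the \emph{trailing} letter rather than the leading one — is that left multiplication does not affect the end of a reduced word: if $g$ has syllable length at least $2$, then $q(sg)=q(g)$ for every $s\in H\cup L$, since multiplying by a single letter on the left can only modify the first syllable (even when total cancellation of the first syllable occurs, the trailing $H$-letter is unchanged). Consequently $sA\tu A$ is contained in the set of elements of syllable length $\le 1$, that is, in $H\cup L$. On this set the analysis is elementary. For $s\in H$, the elements of $L$ contribute nothing up to finitely many points (their trailing $H$-letter stays $1$), while $(sA\tu A)\cap H$ agrees, outside a finite set, with $sB\tu B$, which is finite because $B$ is $H$-commensurated. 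For $s\in L$, neither the elements of $H$ (whose trailing $H$-letter is unchanged) nor those of $L$ (whose trailing $H$-letter remains $1$) contribute up to finitely many points. Hence $sA\tu A$ is finite in both cases, so $A$ is left $G$-commensurated and $i^*[A]=[B]$, proving surjectivity of $i^*$ and thus injectivity of $i_*$.

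The main obstacle is really the choice of construction, not the verification. The naive attempt of declaring $g\in A$ according to the leading $H$-letter of $g$ (equivalently, taking a union of cones based at the trivial coset) fails: left multiplication by an element of $L$ turns the leading $H$-letter into $1$ and thus displaces essentially all of such a set, producing an infinite symmetric difference and a non-commensurated $A$. Switching to the trailing letter is precisely what makes $A$ stable under the left action up to the finitely many short words; and the free product hypothesis — trivial intersection $H\cap L=\{1\}$ together with the uniqueness of the normal form — is exactly what guarantees both that $q$ is well defined and that the short-word corrections are absorbed by the commensuration of $B$.
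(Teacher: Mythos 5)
Your proposal is correct and follows essentially the same route as the paper: dualize to surjectivity of $i^*:\mathcal{P}^\star_G(G)\to\mathcal{P}^\star_H(H)$, and lift a left $H$-commensurated $B\subseteq H$ via the trailing-$H$-letter map (the paper's $\mathrm{suf}_H$) to $A=\{g:\mathrm{suf}_H(g)\in B\}$, checking commensuration on the generators $H\cup L$. The verification details match the paper's (left $L$-invariance of $A$ plus $sA\tu A$ reducing to $sB\tu B$ for $s\in H$), so there is nothing to add.
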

\begin{proof}

Given an element $g$ of the free product $H\ast L$, write it as a reduced word with respect to $H\cup L$, define its $H$-suffix $\mathrm{suf}_H(g)$ to be $h$ if the last ``letter" in the reduced decomposition of $g$ is $h\in H$, and $1$ otherwise. For instance, $\mathrm{suf}_H(h)=\ell h$ for all $(\ell,h)\in L\times H$ and $\mathrm{suf}_H(h\ell)=1$ for all $h\in H$ and all $\ell\in L\smallsetminus\{1\}$. Then $\mathrm{suf}_H(\ell g)=\mathrm{suf}_H(g)$ for all $\ell\in L$ and $g\in G$, and for $h\in H$, we have $\mathrm{suf}_H(hg)=\mathrm{suf}_H(g)$ for all $g\in G\smallsetminus H$.

Now let us proceed to the proof. By Stone duality, this amounts to checking that the canonical map $i^*:\mathcal{P}^\star_G(G)\to \mathcal{P}^\star_H(H)$ is surjective. This map simply sends a left $G$-commensurated subset of $G$ to its intersection with $H$. Let $M\in\mathcal{P}^\star_H(H)$ be a left $H$-commensurated subset of $H$.
Now define $M'=\{g\in G:\mathrm{suf}_H(g)\in M\}$. Clearly $M'\cap H=M$. Surjectivity is proved if we can check that $M$ is left $G$-commensurated. By the property of $\mathrm{suf}_H$, the subset $M'$ is left $L$-invariant. For $h\in H$, consider $g\in M'\smallsetminus h^{-1}M'$. This means that $\mathrm{suf}_H(g)\in M$ while $\mathrm{suf}_H(hg)\notin M'$. The partial $H$-invariance of $\mathrm{suf}_H$ implies that $g\in H$. Hence $M'\smallsetminus h^{-1}M'=M\smallsetminus h^{-1}M$ is finite for all $h\in H$.
\end{proof}

\section{No isolated point: proof of Theorem \ref{noisola}}\label{s_nois}

Theorem \ref{noisola} asserts that for every infinitely-ended group $G$, the space of ends of $G$ has no isolated point.
The locally finite case is covered by Holt's theorem, which, stated this way, says that $G$ is countable, and by Protasov's theorem.

Now suppose that $G$ is not locally finite; let $H$ be an infinite, finitely generated subgroup. Let us then prove a stronger result, generalizing Abels' Theorem \ref{abels75}:

\begin{prop}\label{isolaspecker}
Let $G$ be a infinitely-ended group that is not locally finite. Then every $G$-equivariant Stone space quotient of $\mathcal{E}^\star(G)$ is perfect or reduced to a singleton. In other words, every right $G$-invariant Boolean subalgebra of $\mathcal{P}^\star_G(G)$, not reduced to $\{0,1\}$, is non-atomic. (In the language of \cite{Abe}, this says that the boundary of every Specker compactification of $G$, if not reduced to a singleton, is a perfect space.)
\end{prop}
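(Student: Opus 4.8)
The plan is to prove that every right $G$-invariant Boolean subalgebra $\mathcal{B}$ of $\mathcal{P}^\star_G(G)$, not reduced to $\{0,1\}$, is non-atomic. Suppose for contradiction that $\mathcal{B}$ has an atom, i.e.\ an element $a\in\mathcal{B}$, $a\neq 0$, such that no element of $\mathcal{B}$ strictly lies between $0$ and $a$. Concretely, $a$ is the class of some infinite left $G$-commensurated subset $A\subset G$ such that for every $B\in\mathcal{B}$ with $B\subset A$ (modulo finite), either $B$ is finite or $A\smallsetminus B$ is finite. The goal is to derive a contradiction using the presence of the infinite finitely generated subgroup $H$, together with the right $G$-invariance of $\mathcal{B}$.

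The key structural input is Lemma \ref{actfg}: since $A$ is in particular left $H$-commensurated, there is a finite subset $F$ of $G$ such that $A\cap Hg\in\{\emptyset,Hg\}$ for every right coset $Hg$ disjoint from $F$. Thus, up to a finite error, $A$ is a union of right cosets $Hg$; let $J\subset H\backslash G$ be the set of cosets contained in $A$. Since $A$ is infinite and $H$ is infinite, $J$ is a nonempty set of (infinite) cosets. Now I want to split $A$ into two infinite commensurated pieces, each still lying in $\mathcal{B}$, to contradict atomicity. The natural idea is to exploit the right $G$-action: for $t\in G$, the translate $At$ is again commensurated, and its class lies in $\mathcal{B}$ by right $G$-invariance; forming $a\wedge(a\cdot t)$ (the class of $A\cap At$) produces elements of $\mathcal{B}$ below $a$. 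Atomicity forces each such meet to be $0$ or $a$ modulo finite, i.e.\ $A\cap At$ is finite or cofinite in $A$ for every $t$. This says that the class $a$ is \emph{almost periodic} under the right action in a strong sense, and I expect this rigidity to be incompatible with $A$ being a union of infinitely many cosets or with $H$ being non-locally-finite.

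The cleanest route I would pursue is to reduce to the bi-ends / two-sided picture and reach a contradiction with 1-endedness behavior. Because $a$ generates (under the right $G$-action and Boolean operations) a $G$-invariant subalgebra all of whose atoms form a single $G$-orbit, and because atoms must be permuted by the action, the set of atoms below the unit that are $G$-translates of $a$ is a finite or infinite $G$-set on which $G$ acts; the stabilizer of $a$ has finite index in its orbit only if the orbit is finite. Here I would invoke Abels' Theorem \ref{abels75}: the only finite $G$-equivariant quotients of $\mathcal{E}^\star(G)$ are singletons, so the orbit of the atom cannot be finite, while an infinite orbit of pairwise disjoint atoms forces the cellularity and branching to behave like the locally finite case, contradicting that $G$ is not locally finite (via the finitely generated subgroup $H$ and Lemma \ref{actfg}, which confines commensurated subsets to finitely many non-coset cosets). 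Made precise, atomicity would manufacture an infinite left $G$-almost invariant function with finite fibers, i.e.\ it would show $G$ untamely-ended in the style of Theorem \ref{count_cell2}, which for non-locally-finite $G$ is impossible.

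The main obstacle I anticipate is controlling the interaction between the \emph{right} $G$-action (which moves the atom $a$) and the \emph{left} $H$-invariance structure furnished by Lemma \ref{actfg}: the lemma describes $A$ as a union of right cosets $Hg$ up to a finite set, but right translation $A\mapsto At$ permutes these cosets in a way that need not respect $J$, so tracking when $A\cap At$ is finite versus cofinite requires care. I would handle this by passing to the $L$-invariant description for an infinite finitely generated $L\le H$ (exactly as in the proof of Lemma \ref{1endfini}), reducing the bookkeeping to the combinatorics of cosets $L\backslash G$, where finiteness of symmetric differences becomes a finite combinatorial condition. Once the atom is pinned down to a finite union of cosets modulo finite sets, the non-local-finiteness of $H$ (hence the infinitude of each coset and the existence of infinitely many splittings inside a single infinite coset, which is itself infinitely-ended or 1-ended) should yield the required proper nonzero subelement of $a$ in $\mathcal{B}$, contradicting atomicity and completing the proof.
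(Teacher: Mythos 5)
Your setup is sound and matches the paper's: take an atom $a$, realized by an infinite commensurated set $A$, use Lemma \ref{actfg} to control $A$ along cosets of an infinite finitely generated subgroup, and observe that right $G$-invariance of $\mathcal{B}$ plus atomicity forces $A\cap At$ to be finite or cofinite in $A$ for every $t$. You also correctly identify Abels' Theorem \ref{abels75} as the tool ruling out a finite orbit of the atom. But the route you propose from there to a contradiction does not work. An infinite family of pairwise near-disjoint atoms $At$ does not contradict anything about a non-locally-finite group: a Cantor space (the space of ends of $F_2$, say) already contains infinitely many pairwise disjoint nonempty clopen sets, so no cellularity obstruction arises. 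Likewise, the atoms $At$ are infinite subsets of $G$, so the function they would define has \emph{infinite} fibers and cannot feed into condition (2) of Theorem \ref{count_cell2}; and in any case untame-endedness is not impossible for non-locally-finite groups (the free group of infinite rank is untamely-ended), so even producing such a function would not be a contradiction. Finally, your last paragraph proposes to split $A$ inside a single infinite coset, but any such split must produce an element of $\mathcal{B}$, which is only closed under Boolean operations and right translation; an ad hoc subdivision of a coset has no reason to lie in $\mathcal{B}$, which is precisely the difficulty atomicity poses.

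The missing ingredient is \emph{minimality} of the right action of a finitely generated infinitely-ended group on its space of ends (Abels, via Stallings). The paper's proof first uses Theorem \ref{abels75} to get two more infinite commensurated sets $Z_1,Z_2\in\mathcal{B}$ disjoint from $X$, then finds a finitely generated subgroup $L$ meeting $X$, $Z_1$, $Z_2$ in infinite sets, so that $L$ has at least $3$ ends and $X\cap L$ defines a nonempty proper clopen subset $M$ of $\mathcal{E}^\star(L)$. Atomicity applied to the translates $Xh$, $h\in L$, shows the sets $Mh$ are pairwise equal or disjoint; minimality of the $L$-action forces $\bigcup_{h\in L}Mh=\mathcal{E}^\star(L)$, compactness makes this a finite union, so a finite-index subgroup of $L$ preserves $M$, and minimality for that subgroup forces $M=\mathcal{E}^\star(L)$ --- contradicting that $L\smallsetminus X$ is infinite. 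Without this minimality input (or some substitute exploiting the dynamics of the right action on $\mathcal{E}^\star(L)$), the rigidity you extract from atomicity is not enough to reach a contradiction.
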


\begin{proof}
Let $A'$ be a right $G$-invariant subalgebra of $\mathcal{P}^\star_G(G)$, not reduced to $\{0,1\}$; suppose by contradiction that $A'$ is atomic. Let $A\subset\mathcal{P}_{(G)}(G)$ be its inverse image in $G$.


The assumption means that there exists an infinite left $G$-commensurated subset $X\subset G$, with $X\in A$ such that for every $Y\in A$, either $Y$ is near disjoint from $X$ (in the sense that $X\cap Y$ is finite), or $X$ is near contained in $Y$ (in the sense that $X\smallsetminus Y$ is finite). 

Since $G$ is non-locally-finite with $\ge 3$ ends, by Abels' result (Theorem \ref{abels75}), $A$ is infinite. Hence, there exists infinite left $G$-commensurated subsets $Z_1,Z_2\in A$, such that $X$, $Z_1$ and $Z_2$ are pairwise disjoint.


We claim that there exists a finitely generated subgroup $L$ of $G$ such that $X'=L\cap X$, $Z'_1=L\cap Z_1$ and $Z'_1=L\cap Z_2$ are all infinite. There exists at least one right coset $Hg_0$ such that $X\cap Hg_0$ is infinite: indeed, otherwise, this intersection is finite for all $g$, and then by Lemma \ref{actfg} and using that $H$ is infinite, it is empty for all but finitely many $Hg$, which would imply that $X$ is finite. Similarly, for $i=1,2$, at least one right coset $Hg_i$ such that $Z_i\cap Hg_i$ is infinite. Then we can choose $L=\langle H\cup\{g_0,g_1,g_2\}\rangle$ and the claim is proved.

Let $L$ be as given by the claim. Then $L$ has at least 3 ends, and $X'$ defines a nonempty, proper subset $M$ of $\mathcal{E}^\star(L)$. Since $A$ is right $G$-invariant, the previous property of $X$ applies to $Y=Xh$ for $h\in L$, and yields: for every $h\in L$, $X'$ is either near contained in $X'h$, or near disjoint to $X'h$. This implies that for every $h\in L$, we have $Mh$ either contained or disjoint to $M$. Applying this to $h^{-1}$, we deduce the stronger fact that for every $h\in L$, we have $Mh$ either equal or disjoint to $M$. The complement of $\bigcup_{h\in L}Mh$ is a proper closed $L$-invariant subset of $\mathcal{E}^\star(L)$; by minimality of the $L$-action, it is empty. It follows that the union is finite, that is, there exists a finite index subgroup of $L$ preserving $M$. Since the minimality of the action on the space of ends holds for every finite index subgroup of $L$, we deduce that $M=\mathcal{E}^\star(L)$. Since both $X'$ and $L\smallsetminus X'$ are infinite, we reach a contradiction.
\end{proof}

\section{Bi-ends: discussion and proof of Theorem \ref{tbiends}}\label{biends}

In \cite{ScS}, 1-ended is called ``completely regular" and 1-bi-ended is called ``regular". They prove that
\begin{itemize}
\item every group with an infinite, infinite index finitely generated subgroup, is 1-bi-ended. 
\item every uncountable group is 1-bi-ended.
\end{itemize}

This obviously applies to infinitely generated groups that are not (countable locally finite). For finitely generated groups, this discards virtually cyclic groups on the one hand (for which they provide a complete discussion), and others. Others that are not covered are (non-virtually cyclic) finitely generated groups in which every infinite index subgroup is locally finite; note that such groups are necessarily torsion. It turns out that there exist such groups (later discovered: Tarski monsters). However, they are 1-ended. Indeed, it is an immediate consequence of Stallings' theorem that finitely generated groups with $\ge 2$ ends are not torsion. Nevertheless, let us mention that this already follows from Freudenthal's remarkable work \cite{Fr2}, where he obtained the following, which immediately implies that every infinitely-ended group is non-torsion.

\begin{thm}[Freudenthal, 7.6 in \cite{Fr2}]\label{frenontor}
Every finitely generated group with at least 3 ends has a nonempty subset $Q$ and an element $g$ such that $gQ\subset Q$ and $\bigcap_{n}g^nQ=\emptyset$.
\end{thm}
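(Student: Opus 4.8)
The strategy is to realize the desired nested subset geometrically inside a tree action, which reduces the problem to producing a single translating (hyperbolic) element. Since a group with at least $3$ ends is infinitely-ended, the Holt/Dicks--Dunwoody generalization of Stallings' theorem \cite[Theorem IV.6.10]{DD} (already invoked in the proof of Theorem \ref{cell_uncount2}) provides an inversion-free, minimal action of $G$ on an unbounded tree $T$ with finite edge stabilizers, with boundary $G$-equivariantly identified to $\mathcal{E}^\star(G)$. By Abels' minimality result the action on $\partial T\cong\mathcal{E}^\star(G)$ has no fixed point, and since there are at least three ends it is nontrivial; hence $G$ fixes neither a vertex nor an end of $T$, so by the standard classification of isometries of trees $G$ contains a hyperbolic element $g$, translating along an axis $\ell$ by some $\tau>0$ toward an attracting end $\omega_+$ (with repelling end $\omega_-$).

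Now I would fix an oriented edge $e$ on $\ell$ whose terminal vertex is the one closer to $\omega_+$, let $\mathcal{H}$ be the half-tree on the $\omega_+$-side of $e$ (the component of $T$ minus the open edge $e$ that contains $\omega_+$), and take as base vertex $o$ the terminal vertex of $e$, so that $o\in\mathcal{H}$. Define the candidate subset by $Q=\{x\in G: x\cdot o\in\mathcal{H}\}$; then $1\in Q$, so $Q$ is nonempty. Writing $y=gx$ one computes $gQ=\{y\in G: y\cdot o\in g\mathcal{H}\}$, and since $g$ shifts the axis toward $\omega_+$, the edge $ge$ lies strictly beyond $e$, whence $g\mathcal{H}$ is the half-tree on the $\omega_+$-side of $ge$ and satisfies $g\mathcal{H}\subsetneq\mathcal{H}$. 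This gives $gQ\subseteq Q$ immediately. Iterating, $\bigcap_n g^nQ=\{y\in G: y\cdot o\in\bigcap_n g^n\mathcal{H}\}$, where $g^n\mathcal{H}$ is the half-tree beyond $g^ne$; as $g^ne\to\omega_+$ along $\ell$, any fixed vertex eventually lies on the $\omega_-$-side of $g^ne$, so $\bigcap_n g^n\mathcal{H}=\emptyset$ and therefore $\bigcap_n g^nQ=\emptyset$, as required.

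The only nonformal input is the existence of the hyperbolic element $g$; once it is in hand, the half-tree dynamics make the verification of $gQ\subseteq Q$ and $\bigcap_n g^nQ=\emptyset$ routine, the one point needing care being the left/right bookkeeping between the action of $G$ on itself and its geometric action on $T$. I expect the production of the translating element to be the genuine obstacle, and it is precisely here that Freudenthal's original argument, which predates and avoids Stallings' theorem, does its real work: rather than passing through a tree, it constructs a ``half-space'' $Q$ and a shifting element $g$ directly from a system of cuts in the Cayley graph of $G$. Either route isolates the same core fact—that at least three ends force the existence of an infinite-order element acting as a translation—and the conclusion $gQ\subseteq Q$ with $\bigcap_n g^nQ=\emptyset$ then follows formally.
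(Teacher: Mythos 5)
Your argument is mathematically sound, but be aware that the paper does not prove this statement at all: it is a citation of Freudenthal's 1945 paper \cite[7.6]{Fr2}, and the whole reason it appears is that it \emph{predates and avoids} Stallings' theorem. The surrounding text uses it precisely so that Theorem \ref{tbiends} ``does not rely on Stallings' theorem,'' and to stress that the non-torsion of infinitely-ended finitely generated groups was available before 1971. Your route — Stallings/Dicks--Dunwoody splitting, a loxodromic element $g$ on the Bass--Serre tree, $Q=\{x:x\cdot o\in\mathcal{H}\}$ for a nested half-tree $\mathcal{H}$ — is a correct and efficient modern derivation (and is close in spirit to the set $M=\{g:g^{-1}e\in e^\sharp\}$ used in the paper's proof of Theorem \ref{cell_uncount2}), but it makes the theorem logically downstream of Stallings and therefore useless for the purpose it serves in this paper. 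Freudenthal's argument instead extracts the translating element and the nested ``half-space'' $Q$ directly from the system of cuts in the Cayley graph, via what is now recognized as the convergence-property of the action on the space of ends; what that buys is independence from splitting theory, at the cost of a longer combinatorial analysis. You correctly anticipate this in your closing paragraph, but a proof spliced in here should either be Freudenthal's or come with the caveat that it changes the logical dependencies.

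Two small inaccuracies that do not affect validity: the boundary of the Dicks--Dunwoody tree is \emph{not} in general $G$-equivariantly homeomorphic to $\mathcal{E}^\star(G)$ (vertex stabilizers may be infinite, e.g.\ for $\Z^2\ast\Z^2$, and their ends are collapsed in $\partial T$), so the appeal to Abels' minimality on $\partial T$ should be replaced by the direct observation that a finitely generated group acting on a tree with unbounded orbits contains a hyperbolic element (or simply that $ab$ is hyperbolic for $a\in A\smallsetminus C$, $b\in B\smallsetminus C$ in a splitting $A\ast_C B$, resp.\ the stable letter in an HNN extension). Also, you only establish $gQ\subseteq Q$; the inclusion is automatically proper because $gQ=Q$ would force $\bigcap_n g^nQ=Q\neq\emptyset$, which is worth saying explicitly.
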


While this is now superseded by Stallings' theorem, the method, which actually shows that the action on the boundary is a convergence action (as later formalized by Gehring and Martin \cite{GM87}) is not. 

Let us now pass to the proof of Theorem \ref{tbiends}.
(\ref{scs1}) is trivial, (\ref{scs2}) is essentially \cite[Theorem 3(2)]{ScS}; (\ref{scs3}) is Proposition \ref{notame}, which follows the argument in \cite[Theorem 4]{ScS}. In (\ref{scs4}), let us first treat the case when $G$ is finite-by-$D_\infty$: in this case \cite[Theorem 3(1)]{ScS} states that $G$ is 1-bi-ended. Now the main lemma in \cite{ScS} is that if $G$ has an infinite subgroup $H$ of index $|G|$, generated by a subset of cardinal $<|G|$, then $G$ is 1-bi-ended. This immediately applies if $G$ is uncountable (\cite[Theorem 1]{ScS}), and also if it has an infinite finitely generated subgroup of infinite index. This applies when $G$ is infinitely generated and not locally finite. This boils down to the finitely generated case. 

Clearly $G$ is 1-bi-ended when $G$ is 1-ended (an at most 1-ended group is called ``completely regular" in \cite{ScS}). The virtually-$\Z$ case being settled, it remains to consider finitely generated groups that are not virtually-$\Z$ and have at least 2 ends. That 2-ended groups are virtually cyclic is independently due to Hopf and Freudenthal, so it remains to consider the case with at least 3 ends. By Freudenthal's theorem \ref{frenontor}, this implies that the group has an infinite cyclic subgroup, necessarily of infinite index, and hence the Scott-Sonneborn result applies.

The locally compact version of the theorem holds with the more-or-less obvious natural changes: finite $\to$ compact; finite-by-$\Z$ $\to$ compact-by-($\Z$ or $\R$); countable $\to$ $\sigma$-compact; locally finite $\to$ regionally elliptic; finitely generated $\to$ compactly generated. Since no new specific phenomenon occurs, we omit the proof.

Let us also mention that the result that infinite countable locally compact groups are untamely-biended is implicit in the proof of \cite[Theorem 4]{ScS}.

\begin{prop}\label{notame}
Let $G$ be an infinite countable locally compact group. Then $G$ is untamely-biended (and hence not tamely-ended). That is, there exists a function $\ell:G\to\N$, with infinite image, that is almost $G$-bi-invariant, in the sense that for every $g\in G$, the set of $h\in G$ such that $\ell(gh)=\ell(hg)=\ell(h)$ has a finite complement. Moreover, $\ell$ can be chosen with finite fibers.
\end{prop}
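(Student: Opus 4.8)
The plan is first to reduce to the discrete setting and then to isolate the hypothesis that actually does the work. Since $G$ is countable and locally compact, it is a Baire space in which singletons are closed, so Baire's theorem forces an isolated point, and homogeneity of topological groups then makes every point isolated; thus $G$ is discrete, and I may treat it as an infinite countable discrete group. Here one must be careful about the class of groups involved. Almost $G$-bi-invariance is stronger than almost left-invariance, and a finite-fibered, almost left-invariant $\ell$ with infinite image cannot exist as soon as $G$ has an element $g$ of infinite order: the bad set $\{h:\ell(gh)\neq\ell(h)\}$ would then be finite, hence finite in each infinite right coset $\langle g\rangle h$, which would force $\ell$ to be eventually constant along $\langle g\rangle h\cong\Z$ and so produce an infinite fiber. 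The same phenomenon rules out finitely generated infinite torsion groups, which are tamely-ended. Consequently the operative hypothesis, matching Theorem \ref{tbiends}(\ref{scs3}), is that $G$ be locally finite, i.e.\ an increasing union of finite subgroups; the construction below uses exactly this.

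So I would write $G=\bigcup_{n\ge 0}G_n$ with each $G_n$ a finite subgroup, $G_0=\{1\}$ and $G_n\subset G_{n+1}$. Since $G$ is infinite while each $G_n$ is finite, infinitely many inclusions are strict, and after reindexing I may assume $G_n\subsetneq G_{n+1}$ for all $n$. Define the level function $\ell(g)=\min\{n\ge 0:g\in G_n\}$. Then $\ell^{-1}(0)=\{1\}$ and $\ell^{-1}(n)=G_n\smallsetminus G_{n-1}$ for $n\ge 1$, so every fiber is finite, and the image of $\ell$ is all of $\N$, in particular infinite.

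The verification of almost bi-invariance is where the subgroup structure (not merely a filtration by finite subsets) is essential, and it is then immediate. Fix $g\in G$ and put $m=\ell(g)$, so $g\in G_m$. For any $h$ with $\ell(h)=n>m$ we have $g,h\in G_n$, hence $gh,hg\in G_n$ and therefore $\ell(gh),\ell(hg)\le n$; moreover if $gh\in G_{n-1}$ then $h=g^{-1}(gh)\in G_{n-1}$ (as $g\in G_m\subset G_{n-1}$), contradicting $\ell(h)=n$, and symmetrically for $hg$. Thus $\ell(gh)=\ell(hg)=n=\ell(h)$ for every $h$ with $\ell(h)>m$, that is, for every $h\in G\smallsetminus G_m$, and the complement $G_m$ is finite. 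Hence for each $g$ the set of $h$ with $\ell(gh)=\ell(hg)=\ell(h)$ is cofinite, so $\ell$ is almost $G$-bi-invariant; having infinite image and finite fibers, it witnesses that $G$ is untamely-biended, and a fortiori not tamely-ended.

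The only genuine obstacle is conceptual rather than computational: one must recognize that local finiteness cannot be dropped (the $\Z$ example, and finitely generated infinite torsion groups, already fail two-sidedly and even one-sidedly), and that it is precisely the closure of the $G_n$ under multiplication, giving $gh,hg\in G_n\smallsetminus G_{n-1}$ simultaneously, which yields two-sided invariance for free. Once the filtration by finite \emph{subgroups} is in hand, the finite-fiber, infinite-image, and bi-invariance requirements are all met at once with no further work, in the spirit of the argument of \cite[Theorem 4]{ScS}.
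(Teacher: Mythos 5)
Your proof is correct and is essentially identical to the paper's: both write the group as an increasing union $G=\bigcup_n G_n$ of finite \emph{subgroups} and take the level function $\ell(g)=\min\{n:g\in G_n\}$, whose bi-invariance, finite fibers, and infinite image follow exactly as you verify (the paper, following Scott--Sonneborn, leaves these routine checks implicit). You were also right not to take the hypothesis at face value: ``locally compact'' in the statement is a slip for ``locally finite'' --- the paper's own proof only makes sense under that reading, and the proposition is cited precisely as the locally finite implication in Theorem~\ref{tbiends}(\ref{scs3}) and in Theorem~\ref{count_cell2} --- so your preliminary discussion (Baire discreteness, the obstruction from an element of infinite order in $\Z$) correctly identifies the intended hypothesis rather than revealing a gap in your argument.
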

\begin{proof}
Write $G=\bigcup G_n$, with $G_n$ finite and $G_n\subset G_{n+1}$. Write $\ell(g)=\min\{n:g\in G_n\}$. Then $\ell$ is almost $G$-bi-invariant: indeed, for every given $g$, for every $h$ such that $\ell(h)>\ell(g)$ we have $f(gh)=f(hg)=f(h)$. (The proof of \cite[Theorem 4]{ScS} consists in observing that $\ell^{-1}(2\N)$ is bi-$G$-commensurated, but this obviously holds for every subset of $\N$ in lieu of $2\N$.)
\end{proof}

\section{Focal groups}

By TDLC-group, we mean totally disconnected, locally compact group.
A focal TDLC group is by definition a strictly HNN extension of a profinite group. A locally focal TDLC-group is a TDLC-group in which there exists a compactly generated open subgroup $H$, such that every compactly generated subgroup of $G$ containing $H$ is focal. (Thus locally focal TDLC-group is focal if and only if it is compactly generated.)

\begin{prop}\label{locfocal}
Every non-focal, locally focal TDLC-group is 1-ended.
\end{prop}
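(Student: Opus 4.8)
The plan is to describe $G$ explicitly enough to analyse its commensurated subsets by hand, since the splitting theorem is unavailable for non-compactly-generated groups. Write $G$ as the directed union $G=\bigcup_i G_i$ of its compactly generated open subgroups containing $H$; by hypothesis each $G_i$ is focal, i.e. a strictly ascending HNN extension $G_i=N_i\rtimes\langle t_i\rangle$ of a profinite group, where $N_i=\ker\beta_i$ for the level homomorphism $\beta_i\colon G_i\to\Z$, $N_i$ is regionally elliptic and $\sigma$-compact, and conjugation by $t_i$ contracts a compact open subgroup $K_i\le N_i$. First I would glue the $\beta_i$: any continuous homomorphism $G_i\to\Z$ kills every compact subgroup, hence kills the regionally elliptic group $N_i$, so $\mathrm{Hom}(G_i,\Z)=\Z\beta_i$; thus the restriction of $\beta_j$ to $G_i$ is a multiple of $\beta_i$, and after rescaling the $\beta_i$ assemble into a continuous homomorphism $\beta\colon G\to\Q$. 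Now van Dantzig gives a compact open subgroup $W\le G$, and $\beta(W)$ is a compact subgroup of $\R$, hence trivial; therefore $W\le\ker\beta$, so $\ker\beta$ is open and $\beta(G)$ is discrete, i.e. $\cong\Z$. Normalising, I get $\beta\colon G\twoheadrightarrow\Z$ with $N:=\ker\beta=\bigcup_i N_i$ open, normal and regionally elliptic; fixing $t$ with $\beta(t)=1$ we have $G=N\rtimes\langle t\rangle$ and $\phi:=\mathrm{ad}(t)$ contracts some compact open subgroup of $N$. The non-compact-generation hypothesis is exactly that the increasing union $N=\bigcup_i N_i$ does not stabilise (equivalently $G\ne G_i$ for all $i$); note that $N$ may itself be infinitely-ended, precisely when it is merely $\sigma$-compact and non-compact.

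Next I would analyse an arbitrary topologically left $G$-commensurated subset $X$ and show that $X$ or $G\smallsetminus X$ is relatively compact; as $G$ is non-compact (it contains the closed discrete copy $\langle t\rangle$) this yields that $G$ is $1$-ended. Decompose $G=\bigsqcup_{n\in\Z}Nt^n$ along the clopen cosets of the open subgroup $N$ and set $X_n=X\cap Nt^n$, a left $N$-commensurated subset of $Nt^n\cong N$. Commensuration of $X$ by $t$ shows that $tX_{n-1}\triangle X_n$ is relatively compact for every $n$ and empty for all but finitely many $n$; hence, writing $\phi_*$ for the automorphism induced by $\phi$ on the space of ends $\mathcal{E}^\star(N)$, the classes satisfy $[X_n]=\phi_*^{\,n}[X_0]$ for all $n$, with the exact recursion $X_n=tX_{n-1}$ holding for $|n|$ large. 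Commensuration of $X$ by $N$ shows in addition that, for each $g\in N$, one has $gX_n=X_n$ for all but finitely many $n$.

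The heart of the argument, and the step I expect to be the main obstacle, is to exploit the contraction of $\phi$ to conclude that $\xi:=[X_0]\in\mathcal{E}^\star(N)$ is trivial, i.e. $0$ or $1$. Substituting the exact recursion into the stabilisation condition, each $g\in N$ lies, for all large $k$, in the stabiliser $S$ of the tail set; since $\phi^{-1}$ expands compact open subgroups, a Baire-category argument on a $\phi$-stable compact open subgroup forces $S$ to be open, and then the expansion together with regional ellipticity should force $S$ to exhaust $N$, so that the tail sets are $N$-invariant, hence empty or full cosets. This is precisely where non-compact generation must enter: it is what prevents a nontrivial end of some focal $G_i$ (or of $N$) from being propagated coherently up the whole $\Z$-tower, so that $\xi\in\{0,1\}$ is forced — in contrast with the compactly generated case, where the single contracting core $K_i$ exactly builds the infinitely-ended tree of $G_i$. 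Finally the mod-compact recursion $[X_n]=\phi_*^{\,n}\xi$ propagates the same value ($0$ or $1$) through the finitely many intermediate cosets, the \emph{empty} and \emph{cocompact} regimes being incompatible across the recursion $X_n\approx tX_{n-1}$; so the two tails agree, and $X$ (if $\xi=0$) or $G\smallsetminus X$ (if $\xi=1$) meets only finitely many cosets in relatively compact sets and is therefore relatively compact. I expect the contraction/stabiliser step to be the delicate point, the structural reduction above being essentially bookkeeping, and the use of Theorem \ref{HLC} and Lemma \ref{actfg2} entering to control the regionally elliptic pieces $N$ and $K_i$ respectively.
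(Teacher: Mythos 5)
Your proposal is not a complete proof: the step you yourself flag as ``the heart of the argument'' --- forcing the class $\xi=[X_0]$ (and, more importantly, the classes of the two tails) to be trivial --- is left as an expectation, and the sketch you give for it breaks down in exactly the direction where it matters. Run your stabiliser argument on both tails of the recursion $X_n=tX_{n-1}$ ($|n|$ large). On the tail where the relation reads $X_{n_0+k}=t^kX_{n_0}$, stabilisation of $X_{n_0+k}$ by $g$ gives $\phi^{-k}(g)\in\mathrm{Stab}(X_{n_0})$; to hit a prescribed $h$ you take $g=\phi^k(h)$, which \emph{contracts} into a fixed compact open subgroup where the commensuration constants are uniform, and the stabiliser is indeed all of $N$. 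But on the other tail the same computation only puts $\phi^k(g)$ (for $k$ large) into the stabiliser of $X_{-n_0}$, i.e.\ it only populates the stabiliser with elements deep inside compact open subgroups; this yields an \emph{open} subgroup, not all of $N$, and nothing in your sketch upgrades it. This is not a repairable bookkeeping issue: in the focal group $\Q_p\rtimes_p\Z$ the commensurated set $M=\bigcup_{m\le -1}p^{m+1}\Z_p\times\{m\}$ satisfies the exact recursion $M_n=tM_{n-1}$ for all $n\le -1$, every slice $M_n$ is compact (so $[M_n]=0$ for all $n$), yet $M$ is neither relatively compact nor co-relatively-compact. So even the final inference ``all slices trivial and the tails coherent $\Rightarrow$ $X$ or its complement relatively compact'' is false as stated: besides the ``empty'' and ``cocompact'' regimes there is a ``nonempty compact'' regime which $t^{-1}$ inflates into a non-compact union. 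Since your dynamical argument nowhere uses non-compact generation, any completion of it must inject that hypothesis at precisely these two points, and you have not indicated how. (Separately, your structural reduction is also unjustified: $\beta(G)$ is a subgroup of $\Q$ and the discreteness of the quotient topology on $G/\ker\beta$ does not make it a discrete subgroup of $\R$, so you cannot conclude $\beta(G)\cong\Z$ and $G=N\rtimes\langle t\rangle$ without further argument.)

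For comparison, the paper's proof avoids all structure theory and all contraction dynamics. It takes the compactly generated open subgroup $H$ from the definition of locally focal, notes that for any $g$ the focal group $\langle H,g\rangle$ has $H$ as an open, non--regionally-elliptic subgroup, hence of finite index by Lemma \ref{focalopen}; therefore $gHg^{-1}\cap H$ is non-compact for every $g$, i.e.\ $H$ is hardly normal. Then Lemma \ref{lemhama} applies: for a commensurated $X$, Lemma \ref{actfg2} makes $X$ a union of right $H$-cosets away from a compact set, hardly-normality forbids $X$ from containing one coset while missing another, and finally non-compact generation supplies an element $g$ with $gHF\cap HF=\emptyset$, forcing $X$ or its complement to be relatively compact. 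I would encourage you to look for an argument of this soft combinatorial type rather than trying to control the (very large, Parovi\v{c}enko-like) space of ends of the regionally elliptic kernel.
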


\begin{lem}\label{focalopen}
Let $G$ be a focal TDLC-group. Then every open subgroup of $G$ has finite index, or is regionally elliptic.
\end{lem}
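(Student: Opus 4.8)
The plan is to exploit the semidirect structure of a focal group over $\Z$ and to reduce the whole statement to a dichotomy on a single homomorphism. Write $G=K\ast_\theta$, where $K$ is a profinite compact open subgroup, $t$ the stable letter, and $\theta\colon K\to K$ a continuous injective endomorphism with open proper image $\theta(K)=tKt^{-1}$; since open subgroups of a profinite group have finite index, $d:=[K:\theta(K)]\ge 2$ is finite. First I would record the homomorphism $\beta\colon G\to\Z$ sending $K$ to $0$ and $t$ to $1$, which is well defined because each defining relation $tkt^{-1}=\theta(k)$ maps to $0$. Setting $K^{(-n)}:=t^{-n}Kt^{n}$ gives an increasing chain of compact open subgroups with $[K^{(-(n+1))}:K^{(-n)}]=d$, and I would check that $L:=\bigcup_{n\ge 0}K^{(-n)}$ is exactly $\ker\beta$ (it is the normal closure of $K$, and as an increasing union of compact open subgroups it is a normal, regionally elliptic subgroup with $G/L\cong\Z$).

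Now let $H\le G$ be open. The argument splits according to the subgroup $\beta(H)\le\Z$. In the case $\beta(H)=0$ we have $H\subseteq L$, and the point is simply that an open subgroup of a regionally elliptic group is regionally elliptic: given a compact subset $C\subseteq H\subseteq L$, choose a compact open subgroup $U\le L$ with $C\subseteq U$; then $U\cap H$ is a compact open subgroup of $H$ containing $C$. Thus $H$ is regionally elliptic.

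The substantive case is $\beta(H)=m\Z$ with $m\ge 1$, where I want to conclude that $[G:H]<\infty$. Since $[G:HL]=[\Z:m\Z]=m$ and $HL/H\cong L/(L\cap H)$, it suffices to prove that $M:=H\cap L$ has finite index in $L$. Pick $s\in H$ with $\beta(s)=m$ and write $s=\ell_0 t^{m}$ with $\ell_0\in L$, say $\ell_0\in K^{(-j_0)}$. For $n\ge j_0$ one has $\ell_0\in K^{(-n)}$, so $\ell_0^{-1}K^{(-n)}\ell_0=K^{(-n)}$ and hence
\[
s^{-1}K^{(-n)}s=t^{-m}\ell_0^{-1}K^{(-n)}\ell_0 t^{m}=t^{-m}K^{(-n)}t^{m}=K^{(-(n+m))}.
\]
Because $M$ is $s$-invariant, conjugation by $s^{-1}$ restricts to an index-preserving isomorphism $M\cap K^{(-n)}\to M\cap K^{(-(n+m))}$, whence $e_{n+m}=e_n$ for $n\ge j_0$, where $e_n:=[K^{(-n)}:M\cap K^{(-n)}]$. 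The sequence $(e_n)$ is nondecreasing with $\sup_n e_n=[L:M]$ (as $\bigcup_n K^{(-n)}M=L$), so being periodic for $n\ge j_0$ it is eventually constant; therefore $[L:M]=e_{j_0}<\infty$ and $[G:H]=m\,[L:M]<\infty$.

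The main obstacle I anticipate is resisting the natural but false guess that $\beta(H)\neq 0$ forces $M=L$. That stronger statement holds only when $\bigcap_{p\ge 0}\theta^{p}(K)=\{1\}$; in general this intersection can be a nontrivial $\theta$-invariant subgroup (e.g. $K=\Z_p\times C$ with $\theta$ acting by multiplication by $p$ on $\Z_p$ and by an automorphism on a finite factor $C$), and then a proper finite-index $M$ can occur. The correct invariant is thus \emph{finiteness} of $[L:M]$ rather than triviality of $L/M$, which is why I set up the index-periodicity computation above; the only technical care needed is the absorption of the factor $\ell_0$ for $n\ge j_0$, which makes the conjugation by $s^{-1}$ agree with conjugation by $t^{-m}$ on the tail of the chain.
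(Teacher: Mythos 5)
Your proof is correct and follows essentially the same route as the paper's: both identify the regionally elliptic kernel $L$ of the projection onto $\Z$, note that an open subgroup contained in it is regionally elliptic, and otherwise bound $[L:H\cap L]$ by conjugating by an element of $H$ that shifts the exhausting chain of compact open subgroups while preserving $H$. The differences are only in bookkeeping: the paper concludes with a pigeonhole on $d+1$ elements pushed into $K$ by the compacting automorphism, whereas you use monotonicity plus eventual periodicity of the index sequence $e_n$, and your explicit absorption of $\ell_0$ for $n\ge j_0$ replaces the paper's unproved assertion that a compact open subgroup adapted to the chosen element $u$ exists.
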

\begin{proof}
Let $N$ be the regionally elliptic radical of $G$, so $G/N$ is isomorphic to $\mathbf{Z}$. Let $H$ be an open subgroup that is not regionally elliptic. Then $H$ is not contained in $N$, so $HN$ has finite index, so we have to prove that $H$ has finite index in $HN$; we can now suppose that $HN=G$. Let $u$ be an element of $H$ projecting to a generator of $G/N$, such that $uKu^{-1}\subset K$ for some compact open subgroup $K$ of $N$ and $\bigcup_n u^{-n}Ku^n=N$. Let $f$ be the (compacting) automorphism of $N$ given by conjugation by $u$, so $f(K)\subset K$, and $f(H)=H$. 

Let $d$ be the index of $H\cap K$ in $K$. We claim that $H$ has index $\le d$: equivalently, we have to show that $N\cap H$ has index $\le d$ in $N$. Indeed, let $x_0,\dots,x_d$ be elements of $N$. So there exists $n$ such that all $x_i$ belong to $f^{-n}(K)$. Then $f^n(x_i)\in K$ for all $i$, and hence there exists $i\neq j$ such that $f^n(x_i)^{-1}f^n(x_j)\in H$. Hence $x_i^{-1}x_j\in f^{-n}(H)=H$. This shows that $N\cap H$ has index $\le d$.
\end{proof}

Let $H$ be an open subgroup of an LC-group. We say that $H$ is hardly normal if for every $g\notin H$, we have $gHg^{-1}\cap H$ non-compact.


\begin{lem}\label{lemhama}
Let $G$ be an LC-group that is not compactly generated, with a compactly generated open subgroup that is hardly normal. Then $G$ is 1-ended.
\end{lem}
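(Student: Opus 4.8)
The plan is to show directly that every topologically left $G$-commensurated subset $X$ of $G$ has compact closure or is cocompact, which is exactly the assertion that $G$ is $1$-ended. Write $H$ for the given compactly generated open hardly normal subgroup. Since $H$ is compactly generated and $G$ is not, $H$ has infinite index, so the right coset space $H\backslash G$ is infinite; this infinitude, together with hardly normality, will be the two structural inputs. Fix $X$ and suppose for contradiction that both $X$ and $G\smallsetminus X$ have non-compact closure.

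First I would feed $X$ (which is in particular topologically left $H$-commensurated) into Lemma \ref{actfg2}: there is a finite set $F$ such that for every right coset $Hg$ disjoint from $F$ we have $X\cap Hg\in\{\emptyset,Hg\}$. Call these the \emph{good} cosets and the finitely many remaining ones the \emph{bad} cosets; thus on good cosets $X$ is full or empty, while on the finite family of bad cosets $X$ may be an arbitrary $H$-commensurated subset. The key step, where hardly normality enters, is to rule out that $X$ is full on some good coset $Hg_0$ and empty on some good coset $Hg_1$. Indeed, if so, set $s=g_1g_0^{-1}$; as $Hg_0\neq Hg_1$ we have $s\notin H$, so $sHs^{-1}\cap H$ is non-compact by hardly normality. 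A short computation gives $sHg_0\cap Hg_1=(sHs^{-1}\cap H)\,g_1$, a non-compact subset of $Hg_1$. Since $Hg_0\subseteq X$ this set lies in $sX$, whereas $X\cap Hg_1=\emptyset$; hence it lies in $sX\smallsetminus X\subseteq sX\tu X$, contradicting relative compactness of $sX\tu X$. Therefore either all good cosets are full, or all good cosets are empty.

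Replacing $X$ by its complement if necessary, I may assume all good cosets are empty, so that $X$ is contained in the finite union $\bigcup_{\text{bad}}Hg_j$. Reducing a \emph{confined} commensurated set to a compact one is the main obstacle, and it is exactly here that non-compact generation must be used (it is genuinely needed, since $\Z$ with $H=2\Z$ is hardly normal yet two-ended). Suppose $X$ is non-compact; then some $X\cap Hg_1=Bg_1$ with $B\subseteq H$ non-compact. For every $g\in G$, relative compactness of $gX\tu X$ forces $gBg_1$ to lie in $\bigcup_{\text{bad}}Hg_j$ up to a relatively compact set, i.e.\ $gB\smallsetminus\bigcup_j H t_j$ has compact closure, where $t_j=g_jg_1^{-1}$. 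Now if $\bigcup_j Ht_jH$ were all of $G$, then $G$ would be generated by $H$ together with the finitely many $t_j$, hence compactly generated, a contradiction; so I may pick $g\notin\bigcup_j Ht_jH$. Then $gH$ is disjoint from every $Ht_j$, so $gB\subseteq gH$ is disjoint from $\bigcup_j Ht_j$, making $gB\smallsetminus\bigcup_j Ht_j=gB$ non-compact — the desired contradiction. Hence $X$ is compact, and $G$ is $1$-ended.

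I expect the two delicate points to be the coset identity $sHg_0\cap Hg_1=(sHs^{-1}\cap H)g_1$ used in the key step, and the bookkeeping in the final step showing that "confined and commensurated but non-compact" fails precisely when $G$ is a finite union of double cosets $Ht_jH$, equivalently compactly generated. The conceptual content is that hardly normality makes left translation by any $s\notin H$ smear mass non-compactly across cosets, while non-compact generation guarantees enough room (a coset $gH$ avoiding all the relevant double cosets) for this smearing to escape any finite confinement.
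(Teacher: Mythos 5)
Your proof is correct and takes essentially the same route as the paper's: the coset identity $sHg_0\cap Hg_1=(sHs^{-1}\cap H)g_1$ combined with hardly normality to confine $X$ (or its complement) to finitely many right $H$-cosets, then non-compact generation to find a left translate escaping that confinement. The paper's last step is slightly slicker --- it picks $g$ outside the compactly generated subgroup $\langle H\cup F\rangle$, so that $gX\cap X=\emptyset$ and $X\subseteq gX\bigtriangleup X$ is immediately relatively compact --- but your double-coset bookkeeping achieves the same thing.
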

\begin{proof}
Let $X$ be a $G$-commensurated subset of $G$. Then $X\cap Hg$ belongs to $\{\emptyset,Hg\}$ for all but finitely many $Hg$. Suppose by contradiction that there exist $g_1,g_2$ such that $X\cap Hg_1=\emptyset$ and $X\cap Hg_2=Hg_2$. 

We have $g_2g_1^{-1}Hg_1\cap Hg_2=(g_2g_1^{-1}Hg_1g_2^{-1}\cap H)g_2$, which by assumption is closed non-compact, and contained in $X$. Since $Hg_1\cap X$ is empty and $g_2g_1^{-1}X\tu X$ has compact closure, the subset $g_2g_1^{-1}Hg_1\cap X$ has compact closure. This is a contradiction.

Hence we have proved that for every left $G$-commensurated subset $X$, either $X$ or its complement is contained in  $HF$ for some finite subset $F$. Let $X$ be such a subset.

Let $g$ be an element of $G$ not contained in the compactly generated subgroup $\langle H\cup F\rangle$. Then $gHF\cap HF$ is empty. Hence $gX\cap X$ is empty. Since $X$ is left commensurated, this implies that $X$ is compact.
\end{proof}

\begin{proof}[Proof of Proposition \ref{locfocal}]
Let $H$ be as in the definition of locally focal group. For any $g\in G$, let $H'$ be the subgroup generated by $H$ and $g$: it is focal, and hence, by Lemma \ref{focalopen}, $H$ has finite index in $H'$. In particular, $H$ and $gHg^{-1}$ have a common open subgroup of finite index. Thus $H$ is hardly normal (actually it is a --groupwise-- commensurated subgroup). By Lemma \ref{lemhama}, $G$ is 1-ended.
\end{proof}

\section{Proofs for locally compact groups}

First, if $G$ is a locally compact group whose unit component $G_0$ is non-compact, then $G$ has at most 2 ends (\cite[Theorem 4.2]{Hou} or \cite[Satz 7.4]{Abe}). Hence we can suppose that $G_0$ is compact; we then easily check that $G_0$ acts trivially on the space of ends, which is then the same for $G$ and $G/G_0$. Accordingly, we can suppose that $G$ is totally disconnected.

The proof of Theorem \ref{noisola2} separately consists in the regionally elliptic case, and otherwise. In the non-locally-elliptic case, the analogue of Proposition \ref{isolaspecker} holds. The proof is an adaptation of that of Proposition \ref{isolaspecker}, with an issue related to focal groups. Recall that the first step consists in finding an open, compactly generated subgroup $L$ with at least 3 ends (and such that every overgroup of $L$ in $G$ has at least 3 ends). The proof of the existence of $L$ immediately adapts. However, for the sequel of the proof, we need to ensure that $L$ is not focal. At this point, we use Proposition \ref{locfocal}, which says that $G$ has an open subgroup, containing $L$, that is not focal. The remainder of the proof now adapts.

For $G$ locally compact, say that $G$ is untamely-ended if there is a surjective continuous function $G\to\N$, such that for every compact subset $K$ of $G$, the set of $g\in G$ such that $f(g)\neq f(hg)$ for some $h\in K$, has compact closure. Otherwise, say that $G$ is tamely-ended.

\begin{thm}
Let $G$ be a $\sigma$-compact LC-group. The following are equivalent:
\begin{enumerate}
\item\label{tit_mt1} $G$ is metrizably-ended;
\item\label{tit_mt2} the space of ends of $G$ has no continuous surjective map onto the Stone-\v Cech compactification of $\N$;
\item\label{tit_mt3} the end compactification of $X$ has no continuous surjective map onto the Stone-\v Cech compactification of $\N$;
\item\label{tit_mt4} $G$ is tamely-ended.
\item\label{tit_mt5} there exists an open, compactly generated subgroup $H$ of $G$ such that the only topologically left $G$-commensurated, $H$-invariant subsets of $G$ are $\emptyset$ and $G$.
\end{enumerate}
\end{thm}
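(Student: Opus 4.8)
The plan is to mirror the proof of Theorem \ref{setcoume}, since this is precisely its locally compact, $\sigma$-compact counterpart, with ``finite subset'' replaced by ``compact subset'' and ``finitely generated subgroup'' by ``open compactly generated subgroup''. First I would dispatch the easy equivalences. The implications (\ref{tit_mt4})$\Rightarrow$(\ref{tit_mt3})$\Rightarrow$(\ref{tit_mt2}) go through verbatim: an untamely-ended surjection $G\to\N$ dualizes to an injection $2^\N\hookrightarrow\mathcal{P}_{(G)}(G)$, and after passing to a partition of $\N$ into infinitely many infinite sets this descends modulo the ideal of relatively-compact subsets, yielding the desired continuous surjections of Stone duals. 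The implication (\ref{tit_mt2})$\Rightarrow$(\ref{tit_mt1}) is immediate (a surjection onto $\beta\N$ forces $\mathcal{P}^\star_G(G)$ uncountable), and the contrapositive direction here is clear since the Boolean algebra of a metrizable Stone space is countable.

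The substantive content is (\ref{tit_mt1})$\Rightarrow$(\ref{tit_mt5}) and (\ref{tit_mt5})$\Rightarrow$(\ref{tit_mt4}). For the latter I would argue by contraposition exactly as in the discrete case: assume $G$ is not metrizably-ended and build an unbounded almost-invariant function by exhaustion. Since $G$ is $\sigma$-compact, write $G=\bigcup_n K_n$ with $K_n$ compact open--ascending (using that in a TDLC group, to which we may reduce, compact sets lie in compact open subgroups; more carefully, take an exhaustion by compacts $K_n$ and let $H_n$ be the open compactly generated subgroup they generate). Let $M_G(G,\Z)$ denote the almost-$G$-invariant integer-valued functions and $M_G^{H_n}$ its $H_n$-invariant subfunctions. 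As in Theorem \ref{setcoume}, $M_G^{H_n}(G,\Z)$ is the kernel of $f\mapsto(f-g\cdot f)_{g\in K_n}$ into a group $K(G,\Z)^{K_n}$; the key point is that the target is countable while $M_G(G,\Z)$ is uncountable by non-metrizability, so some $M_G^{H_n}$ is uncountable, hence not reduced to constants, producing $f_n$ with $f_n(o)=0$, image $\{0,n\}$, and vanishing on the exhausting piece $H_{n-1}\cdot o$. The sum $f=\sum_n f_n$ is then a well-defined unbounded function, and a fixed $g$ lies in some $K_n$, so all but finitely many terms $f_k$ ($k\ge n$) are $g$-invariant, while the finitely many remaining discrepancies are supported on a relatively compact set; thus $f$ is almost $G$-invariant with infinite image, giving (\ref{tit_mt4}).

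For (\ref{tit_mt1})$\Rightarrow$(\ref{tit_mt5}), I would extract the statement in the style of Theorem \ref{caract_metri}: metrizability is equivalent to the existence of an open compactly generated $H$ killing all non-trivial $H$-invariant commensurated subsets, using Lemma \ref{actfg2} in place of Lemma \ref{actfg} to control $X\cap Hg$ on cofinitely many cosets. The remaining equivalence (\ref{tit_mt5})$\Rightarrow$(\ref{tit_mt1}) is a cardinality count: if such an $H$ exists, then by Lemma \ref{actfg2} every topologically left $G$-commensurated $X$ agrees with an $H$-invariant one off a relatively compact set, and the $H$-invariant commensurated subsets are governed by the countably many cosets $Hg$ meeting a fixed exhaustion, forcing $\mathcal{P}^\star_G(G)$ countable, i.e. metrizability.

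The main obstacle I anticipate is not the Boolean bookkeeping but the reduction to the totally disconnected case and the correct handling of the ideal of relatively-compact (rather than finite) subsets throughout: one must verify that the ``finitely supported'' module $K(G,\Z)$ and the discrepancy sets behave compactly, so that the exhaustion $f=\sum f_n$ genuinely remains topologically left-commensurated and the target $K(G,\Z)^{K_n}$ stays countable under $\sigma$-compactness. Since, as the paper notes, the connected case reduces to $G/G_0$ with $G_0$ compact acting trivially, I would state that reduction first and then run the argument verbatim in the TDLC setting; given this, no genuinely new phenomenon arises and the proof is a faithful transcription of Theorem \ref{setcoume}.
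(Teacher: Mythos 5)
Your proposal matches the paper's approach exactly: the paper gives no separate argument for this theorem, stating only that ``the proof is a straightforward adaptation of that in the discrete case (Theorem \ref{setcoume}), so we omit it,'' and your transcription of that adaptation is faithful, including the correct identification of the one genuinely delicate point (one must work modulo the ideal of relatively compact subsets, e.g.\ by reducing to $G/G_0$ totally disconnected and then to functions invariant under a fixed compact open subgroup $U$, so that the relevant module becomes the countable module of finitely supported functions on the countable set $G/U$ and the counting argument survives). Two harmless slips: your implication labels are permuted relative to the arguments you actually give (which establish $(1)\Rightarrow(2)\Rightarrow(3)\Rightarrow(4)\Rightarrow(1)$ and $(1)\Leftrightarrow(5)$, so the cycle still closes), and compact subsets of a general TDLC group lie in compactly generated open subgroups rather than compact open ones --- but your own parenthetical correction already fixes the latter.
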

The proof is a straightforward adapatation of that in the discrete case (Theorem \ref{setcoume}), so we omit it.


\begin{thm}\label{count_celltop}
Let $G$ be a $\sigma$-compact LC-group. The following are equivalent.
\begin{enumerate}
\item\label{noncle} $G$ is non-compact and regionally elliptic;
\item the space of ends $\mathcal{E}^\star(G)$ has a continuous map onto the Stone-\v Cech boundary of $\N$;
\item $\mathcal{E}^\star(G)$ has a family of $2^{\aleph_0}$ pairwise disjoint nonempty open subsets.
\item\label{coucel} $\mathcal{E}^\star(G)$ does not have countable cellularity;
\end{enumerate}
\end{thm}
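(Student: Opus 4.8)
The plan is to transpose the discrete Theorem \ref{count_cell2} verbatim under the dictionary ``finite $\to$ compact'', ``infinite locally finite $\to$ non-compact regionally elliptic'', ``finitely generated $\to$ compactly generated'', ``countable $\to$ $\sigma$-compact''. The cyclic chain (\ref{noncle})$\Rightarrow$(2)$\Rightarrow$(3)$\Rightarrow$(\ref{coucel})$\Rightarrow$(\ref{noncle}) is the natural route, with the last implication carrying all the weight.

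For (\ref{noncle})$\Rightarrow$(2), I would first observe that a $\sigma$-compact regionally elliptic group is a strictly increasing union $G=\bigcup_n K_n$ of compact open subgroups (every compact set lies in some compact open subgroup, and $\sigma$-compactness lets us extract a cofinal nested sequence; strictness uses non-compactness). Then I set $\ell(g)=\min\{n:g\in K_n\}$, the exact locally compact analogue of the function in Proposition \ref{notame}: it is continuous, has nonempty compact fibers $K_n\smallsetminus K_{n-1}$, and is almost left-invariant in the topological sense, since for a compact $K\subseteq K_m$ and $\ell(g)>m$ left translation by $K$ preserves the fiber containing $g$ (as $K_m\subseteq K_{\ell(g)}$ is a subgroup). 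Consequently $S\mapsto\ell^{-1}(S)$ is a Boolean embedding $\mathcal{P}(\N)\to\mathcal{P}_{(G)}(G)$ carrying \emph{finite} sets onto exactly the relatively compact ones, hence descends to an embedding $\mathcal{P}(\N)/\mathrm{fin}\hookrightarrow\mathcal{P}^\star_G(G)$; dualizing yields a continuous surjection $\mathcal{E}^\star(G)\to\beta\N\smallsetminus\N$.

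The implications (2)$\Rightarrow$(3)$\Rightarrow$(\ref{coucel}) are as in the discrete case. For (2)$\Rightarrow$(3) I would fix an almost disjoint family of $2^{\aleph_0}$ infinite subsets of $\N$ (pairwise finite intersections); their classes give $2^{\aleph_0}$ pairwise disjoint nonempty clopen subsets of $\beta\N\smallsetminus\N$, whose preimages under the surjection of (2) are $2^{\aleph_0}$ pairwise disjoint nonempty open subsets of $\mathcal{E}^\star(G)$. Then (3)$\Rightarrow$(\ref{coucel}) is immediate since such a family witnesses cellularity $\ge 2^{\aleph_0}>\aleph_0$.

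The hard part is (\ref{coucel})$\Rightarrow$(\ref{noncle}), which I would prove by contraposition, establishing the locally compact analogue of Lemma \ref{infcysep} and Theorem \ref{densalpha}. Assume $G$ is not (non-compact regionally elliptic). If $G$ is compact then $\mathcal{P}^\star_G(G)=0$ and $\mathcal{E}^\star(G)=\emptyset$, so (\ref{coucel}) fails trivially. Otherwise $G$ is $\sigma$-compact but not regionally elliptic; writing $G$ as an increasing union of compactly generated open subgroups, at least one such $H$ must be non-compact. Since $G$-commensuration implies $H$-commensuration, there is a continuous surjection $\mathcal{E}^\star_H(G)\to\mathcal{E}^\star_G(G)$, so it suffices to show $\mathcal{E}^\star_H(G)$ is separable. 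The left $H$-orbits are the right cosets $Hg$, each non-compact and $H$-isomorphic to $H$; as $H$ is compactly generated, each $\mathcal{E}^\star_H(Hg)$ is metrizable (Abels \cite{Abe}), hence separable, and $\sigma$-compactness with openness of $H$ forces only countably many cosets, so $U=\bigcup_{Hg}\mathcal{E}^\star_H(Hg)$ has a countable dense subset. The genuine obstacle is the density of $U$ in $\mathcal{E}^\star_H(G)$: I expect to invoke Lemma \ref{actfg2} to show that any topologically $H$-commensurated $Y\subseteq G$ with non-compact closure meets some coset $Hg$ in a non-compact set — otherwise $Y$ would, modulo a compact error set $HF$ with $F$ finite, be a union of whole cosets together with cosets meeting it in relatively compact sets, and the resulting bookkeeping (each $Y\cap Hg$ relatively compact, and all but finitely many cosets contributing nothing) would force $Y$ itself to have compact closure. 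Granting this, $U$ is dense, $\mathcal{E}^\star_H(G)$ is separable, hence of countable cellularity, and this passes to the quotient $\mathcal{E}^\star_G(G)$, contradicting (\ref{coucel}). Verifying this density step in the topological commensuration setting, rather than the merely combinatorial one of Lemma \ref{infcysep}, is where I anticipate the only real care being needed.
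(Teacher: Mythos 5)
Your proposal is correct and is exactly the route the paper intends: the paper explicitly omits the proof of Theorem \ref{count_celltop}, stating that the proof of the discrete Theorem \ref{count_cell2} ``adapts with routine changes,'' and your argument carries out precisely those changes (the compact-open exhaustion $\ell(g)=\min\{n:g\in K_n\}$ in place of Proposition \ref{notame}, and the topological version of Lemma \ref{infcysep} via Lemma \ref{actfg2} for the implication (\ref{coucel})$\Rightarrow$(\ref{noncle})). The density step you flag does go through as you sketch it, since $H$ open and non-compact makes every coset $Hg$ closed and non-compact, so the dichotomy $Y\cap Hg\in\{\emptyset,Hg\}$ off a finite set of cosets forces $Y$ to be relatively compact if every $Y\cap Hg$ is.
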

Again, the proof of Theorem \ref{count_cell} adapts with routine changes and we omit it.




\section{Holt's theorem in the locally compact setting}\label{s_holt}

The following is an extension of a lemma of Holt:

\begin{lem}\label{holtlemma}
Let $G$ be a group and $H,L$ two subgroups such that
\begin{enumerate}
\item $H\cup L$ generates $G$, and
\item\label{reltor} for every $g\in G$ there exists $n\ge 1$ such that $g^n\in H\cap L$.
\end{enumerate}
Let $I$ be a set and $f:G\to I$ be a function that is
constant on each right coset $Hg$, $g\notin H$ and on each right coset $Lg$, $g\notin L$. Then $f$ is constant on $G\smallsetminus (H\cap L)$. If moreover $f$ is constant on $L$, then it is constant on $G$.
\end{lem}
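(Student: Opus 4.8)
Write $K=H\cap L$. The two hypotheses on $f$ say exactly that $f(hg)=f(g)$ whenever $g\notin H$ and $h\in H$, and $f(\ell g)=f(g)$ whenever $g\notin L$ and $\ell\in L$. I would read this combinatorially, via the bipartite \emph{coset graph} $\mathcal G$ whose vertices are the right cosets $Hg$ and $Lg$ and whose edges are the elements $g\in G$, the edge $g$ joining $Hg$ to $Lg$; two edges share their $H$-vertex (resp.\ $L$-vertex) precisely when they lie in a common coset $Hg$ (resp.\ $Lg$). The hypotheses then say that $f$ is constant on the edges around every vertex except the two \emph{special} vertices $v_H=H\cdot 1$ and $v_L=L\cdot 1$, and that $G\setminus K$ is exactly the set of edges not incident to both special vertices. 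Call an \emph{$H$-move} the replacement of $g$ by $hg$ (legal when $g\notin H$) and an \emph{$L$-move} the replacement of $g$ by $\ell g$ (legal when $g\notin L$); each legal move preserves $f$ and keeps us inside $G\setminus K$ (e.g.\ $g\notin H$ forces $hg\notin H\supseteq K$). Thus it suffices to show that any two elements of $G\setminus K$ are joined by a chain of legal moves.

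The plan is first to reduce to the ``bulk'' $B=G\setminus(H\cup L)$. Since a group is never the union of two proper subgroups, $B\neq\emptyset$ as soon as $H,L$ are proper; note that generation forces neither to contain the other, so $H\setminus K$ and $L\setminus K$ are nonempty. Every $g\in H\setminus K$ satisfies $g\notin L$, so an $L$-move by some $\ell\in L\setminus K$ carries it into $B$ (indeed $\ell g\notin L$ and, since $\ell\notin H$, also $\ell g\notin H$); symmetrically for $L\setminus K$. Hence, once $f$ is shown constant with value $c$ on $B$, every element of $(H\setminus K)\cup(L\setminus K)$ inherits the value $c$ through a single move into $B$, giving $f\equiv c$ on $G\setminus K=B\cup(H\setminus K)\cup(L\setminus K)$.

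It remains to prove that $B$ is connected under legal moves that stay inside $B$, i.e.\ that the subgraph of $\mathcal G$ induced on the non-special vertices is connected. Hypothesis (1) gives, for bulk edges $g,g'$, a word in $H\cup L$ expressing $g'g^{-1}$ and hence an edge-path in $\mathcal G$ from $g$ to $g'$; the difficulty is that this path may pivot through $v_H$ or $v_L$, and such pivots (left multiplication by $H$ while sitting in the coset $H$, or by $L$ while in the coset $L$) are not legal moves. This is exactly where hypothesis (2) is indispensable: in a nontrivial amalgam $H\ast_K L$ the graph $\mathcal G$ is a tree and deleting $v_H,v_L$ \emph{does} disconnect it, but such amalgams always contain hyperbolic elements $h\ell$ (with $h\in H\setminus K$, $\ell\in L\setminus K$) whose powers never lie in $K$, violating (2). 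I would therefore argue by contradiction: a separation of the non-special part of $\mathcal G$ yields an element all of whose forward translates stay on one side, an element escaping to an ``end'' of the coset graph and hence admitting no power in the central region corresponding to $K$, contradicting (2). Once $B$ is connected, the previous paragraph finishes the first assertion. For the ``moreover'' clause, if $f$ is in addition constant on $L$, then since $L\setminus K\subseteq G\setminus K$ carries the value $c$ that constant equals $c$, and as $K\subseteq L$ the value on $K$ is $c$ too, so $f\equiv c$ on all of $G$. The main obstacle is precisely this rerouting-around-special-vertices step: extracting from a would-be separation of $\mathcal G$ an element that contradicts (2).
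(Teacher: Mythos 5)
Your reduction is sound as far as it goes: the coset-graph/legal-moves formalism is a correct reading of the hypotheses, the observation that legal moves preserve $f$ and stay in $G\smallsetminus K$ is right, and the passage from ``$f$ is constant on $B=G\smallsetminus(H\cup L)$'' to ``$f$ is constant on $G\smallsetminus K$'' (and then the \emph{moreover} clause) is correct, granted that $H$ and $L$ are proper --- a hypothesis you invoke but the statement does not contain, and which is genuinely needed (if $H=G$ the first constancy hypothesis is vacuous and the conclusion can fail). You have also correctly located where hypothesis (\ref{reltor}) must act: it is exactly what kills the nontrivial amalgam $H\ast_K L$, for which the conclusion is false.

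However, the proof stops precisely at the only hard point. The claim that the coset graph minus the two special vertices remains connected --- equivalently, that $B$ is a single class under legal moves --- is not proved; it is announced as a plan (``I would therefore argue by contradiction: a separation \ldots yields an element all of whose forward translates stay on one side, an element escaping to an end \ldots hence admitting no power in $K$''). That inference is not available in general: once (\ref{reltor}) holds the coset graph is typically \emph{not} a tree (it is a proper quotient of the Bass--Serre tree of $H\ast_K L$), so a separation of the non-special part does not canonically produce an ``escaping'' element, and ``ends of the coset graph'' has no a priori meaning here. What is actually required is a concrete rerouting argument: given two bulk elements, one must exhibit a chain of legal moves between them, pivoting around the vertices $H$ and $L$ by exploiting that for $h\in H\smallsetminus K$ and $\ell\in L\smallsetminus K$ some power of $h\ell$ lies in $K$, and tracking which partial products land in $H$ or $L$. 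This is the entire content of Holt's argument in \cite[\S 3]{Hol}, which the paper does not reproduce but cites, adding only the (correct, easy) remark that the torsion hypothesis is used solely at the outset and that (\ref{reltor}) suffices there. So the proposal is a reasonable reduction plus an unexecuted sketch of the essential step, not a proof.
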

\begin{proof}
The same lemma is asserted and proved in \cite[\S 3]{Hol}, replacing (\ref{reltor}) by the stronger assumption that $G$ is a torsion group. The latter hypothesis is used only in the very beginning of the proof, where it is clear that (\ref{reltor}) is enough, and the remainder of the proof works with no further change.
\end{proof}

\begin{proof}[Proof of Theorem \ref{HLC}]
Let us show that every non-$\sigma$-compact, regionally elliptic LC-group $G$ is 1-ended.

Fix a compact open subgroup $K$ of $G$. Let $f$ be a function from $G$ to $\Z/2\Z$ that is right $K$-invariant, and such that for every $h\in G$, the function $f_h:g\mapsto f(hg)-f(g)$ has compact support $U_h$, uniformly on compact subsets, in the sense that $\bigcup_{h\in W}U_h$ is compact for every compact subset $W$ of $G$ (note that this union is right $K$-invariant, so is automatically a clopen subset). We have to show that $f$ is constant outside some compact subset. Denote by $f'$ the factored function on $G/K$.

For $N$ a subgroup of $G$, define $q(N)$ as the subgroup generated by $N\cup \bigcup_{g\in N}U_g$. By definition $N\subset q(N)$. If moreover $N$ is open and $\sigma$-compact, then so is $q(N)$.

Then we fix a non-compact, $\sigma$-compact open subgroup $H_0$ containing $K$, and define $H_{n+1}=q(H_n)$, and $H=\bigcup_{n\ge 0}H_n$. Hence $H$ is a non-compact open $\sigma$-compact subgroup containing $K$, and moreover $q(H)=H$. Hence $f$ is constant on all right cosets $Hg$, $g\notin H$. Since $G$ is not $\sigma$-compact, we have $H\neq G$.


Since $G$ is regionally elliptic and $G\neq H$, there is a compact open subgroup $M$ such that $K\subset M$ and $M$ is not contained in $H$. Define $G'$ as the subgroup generated by $H\cup M$; both $H$ and $M$ are proper subgroups of $G'$.

By Lemma \ref{actfg2}, there is a finite subset $F$ of $G'$ such that $f$ is left $M$-invariant outside $MF$. Then $F'$ is contained in the subgroup generated by $F'\cup M$ for some finite subset $F'$ of $H$; in addition we can suppose that $F'$ is not contained in $M$. Let $L$ be the subgroup generated by $F'\cup M$; then $L$ is a compact open subgroup. By definition, $L$ is generated by $H\cap L$ and $M$, and both are proper subgroups of $L$.




We first claim that for every $g\in G\smallsetminus(LH\smallsetminus H)$, then $f$ is constant on $(H\cap L)\ell g$ for every $\ell\in L\smallsetminus H$. Indeed, for such $g,\ell$, we have $\ell g\notin H$: indeed otherwise, we have $g\in LH$, which forces $g\in H$ by the assumption of the claim, and then $\ell g\in H$ implies $\ell\in H$, a contradiction. Hence $\ell g\notin H$, and thus $f$ is constant on $H\ell g$, and the result follows.

Second, we claim that $f$ is constant on $Lg$ for every $g\in G'\smallsetminus L$. For this, it does not matter if we left-multiply $g$ by some element of $L$, so we can suppose that $g\notin LH\smallsetminus H$. Hence, by the first claim, $f$ is constant on $(H\cap L)\ell g$ for every $\ell\in L\smallsetminus H$. Define $u:L\to\Z/2\Z$ by $u(\ell)=f(\ell g)$. Then this restates as: $u$ is constant on $(H\cap L)\ell$ for every $\ell\in L\smallsetminus (H\cap L)$. In addition, $u$ is left $M$-invariant, since $f$ is left $M$-invariant on $G'\smallsetminus L$. Hence, by the second assertion of Lemma \ref{holtlemma} (applied to the subgroups $H\cap L$ and $M$ of $L$), $u$ is constant on $L$. This means that $f$ is constant on $Lg$.


Since $f$ is constant on $Hg$ for every $g\in G'\smallsetminus H$, Lemma \ref{holtlemma} (applied to the subgroups $H$ and $L$ of $G'$) implies that $f$ is constant on $G'\smallsetminus (H\cap L)$. (Keep in mind that $G'$ and $L$ depend on $M$.)

In particular, we have shown that $f$ is constant on $M\smallsetminus H$ for every compact open subgroup $M$ not contained in $H$. Since every pair of elements of $G\smallsetminus H$ is contained in $M$ for some choice of $M$, this shows that $f$ is constant outside $H$, say equal to $c$. Next, fix a single $k\notin H$: then for every $h\in H\smallsetminus U_k$, we have $c=f(kh)=f(h)$. Hence $f$ equals $c$ outside $U_k$.
\end{proof}

\noindent {\bf Acknowledgements.} I thank Pierre-Emmanuel Caprace and Pierre de la Harpe for various useful corrections and remarks. I thank Igor Protasov for pointing out \cite{BCZ}. I am indebted to the referee for valuable corrections and remarks, notably a suggestion which led to Proposition \ref{free_prod_inj_intro}.








\begin{thebibliography}{KM98b}

\bibitem[Ab1]{Abe} H. Abels. Specker-Kompaktifizierungen von lokal kompakten topologischen Gruppen. 
Math. Z. 135 (1973/74), 325--361.

\bibitem[Ab2]{Ab77} H. Abels. On a problem of Freudenthal's. Compositio Math. 35 (1977), no. 1, 39--47.

\bibitem[BCZ]{BCZ} T. Banakh, O. Chervak, L. Zdomskyy.
On character of points in the Higson corona of a metric space. Comment. Math. Univ. Carolin. 54 (2013), no. 2, 159--178

\bibitem[BP]{BP} T. Banakh, I. Protasov, Functional boundedness of balleans, Axioms 8:1 (2019) 33.

\bibitem[BZ]{BZ} T. Banakh, I. Zarichnyi. Characterizing the Cantor bi-cube in asymptotic categories. Groups Geom. Dyn. 5 (2011), no. 4, 691--728.

\bibitem[CCMT]{CCMT} P-E. Caprace, Y. Cornulier, N. Monod, R. Tessera. Amenable hyperbolic groups.
J. Eur. Math. Soc. 17 (2015), no. 11, 2903--2947.

\bibitem[Coh]{Coh} D. Cohen.
Groups of cohomological dimension one. 
Lecture Notes in Mathematics, Vol. 245. Springer-Verlag, Berlin-New York, 1972.

\bibitem[Cor]{CorFW} Y. Cornulier. Group actions with commensurated subsets, wallings and cubings. ArXiv:1302.5982v2, 2016. 

\bibitem[DD]{DD} W. Dicks, M. Dunwoody. Groups acting on graphs. Cambridge Studies in Advanced Mathematics, 17. Cambridge University Press, Cambridge, 1989.

\bibitem[Fr1]{Fr1} H. Freudenthal. \"Uber die enden topologischer R\"aume und Gruppen. Math. Z. 33(1) (1931), 692--713.

\bibitem[Fr2]{Fr2} H. Freudenthal. \"Uber die Enden diskreter Räume und Gruppen. Comment. Math. Helv. 17, (1945). 1--38. 

\bibitem[GM]{GM87} F. Gehring, G. Martin. Discrete quasi conformal groups. I. Proc. London Math. Soc. (3) 55(2) (1987), 331--358.

\bibitem[Hol]{Hol} D. Holt.
Uncountable locally finite groups have one end.
Bull. London Math. Soc. 13 (1981), no. 6, 557--560. 

\bibitem[Hop]{Hop} H. Hopf.
Enden offener R\"aume und unendliche diskontinuierliche Gruppen.
Comment. Math. Helv. 16, (1944). 81--100. 

\bibitem[Hou]{Hou} C.H. Houghton. Ends of locally compact groups and their coset spaces. J. Australian Math. Soc., 17, 274­--284, 1974.

\bibitem[Kop]{Kop} S. Koppelberg. Boolean algebras as unions of chains of subalgebras. Algebra Universalis 7 (1977), no. 2, 195--203.

\bibitem[Kur]{Kur} A. Kurosch. Zum Zerlegungsproblem der Theorie der freien Produkte. Mat. Sbornik 2 (44): 5, 995--1001, 1937.

\bibitem[Pro1]{Pro03} I. Protasov.
Binary coronas of balleans.
Algebra Discrete Math. 2003, no. 4, 50--65.

\bibitem[Pro2]{Pro08} I. Protasov. Balleans of bounded geometry and G-spaces.
Algebra Discrete Math. 2008, no. 2, 101--108. 

\bibitem[Pro3]{Pro11} I. Protasov.
Coronas of ultrametric spaces
Comment. Math. Univ. Carolin. 52(2) (2011) 303--307.

\bibitem[ScS]{ScS} W. Scott, L. Sonneborn. Translations of infinite subsets of a group. Colloq. Math. 10 1963 217--220.

\bibitem[Sp1]{Sp49} E. Specker.
Die erste Cohomologiegruppe von \"Uberlagerungen und
Homotopie-Eigenschaften dreidimensionaler Mannigfaltigkeiten.
Comment. Math. Helv. 23, (1949). 303--333. 

\bibitem[Sp2]{Sp} E. Specker.
Endenverb\"ande von R\"aumen und Gruppen.
Math. Ann. 122, (1950). 167--174. 

\bibitem[St]{Sta} J. Stallings.
Group theory and three-dimensional manifolds.
A James K. Whittemore Lecture in Mathematics given at Yale University, 1969. Yale Mathematical Monographs, 4. Yale University Press, New Haven, Conn.-London, 1971.

\bibitem[vD]{VDB} E. van Douwen. Cardinal functions on Boolean spaces. Chap. 11 in: Handbook of Boolean algebras. Vol. 2. Edited by J. Donald Monk and Robert Bonnet. North-Holland Publishing Co., Amsterdam, 1989.

\bibitem[vM]{vM} J. van Mill. Introduction to $\beta\omega$, in Handbook of Set-Theoretical Topology, Chap. 11,
Elsevier Science Publ. B.V., Amsterdam, 1984.

\bibitem[Wa]{Wa} C. Wall. Poincar\'e complexes: I.
Annals of Math. (2) 86(2) (1967), 213--245.
\end{thebibliography}
\end{document}